\theoremstyle{definition}
\theoremstyle{plain}
\newcommand{\theoremname}{testing}
\newenvironment{named}[1]{\renewcommand{\theoremname}{#1}\begin{namedtheorem}}{\end{namedtheorem}}
\newcommand{\RR}{{\mathbb{R}}}
\newcommand{\ZZ}{{\mathbb{Z}}}
\newcommand{\NN}{{\mathbb{N}}}
\newcommand{\CC}{{\mathbb{C}}}
\newcommand{\sign}{{\rm sign}}
\newcommand{\vol}{{\rm vol}}
\newcommand{\abs}[1]{{\left\vert #1 \right\vert}}
\newcommand{\s}{\Sigma}
\newcommand{\CV}{C^{0}_P(\Sigma)}
\newcommand{\CE}{C^{1}_P(\Sigma)}
\newcommand{\dn}{ d_{qt}}
\newcommand{\dwp}{{ d_{WP}}}
\newcommand{\dpa}{ d_{\pi}}
 \newcommand{\M}{\mathrm{Mod}(\Sigma)}
\newcommand{\TS}{{\mathcal T}(\Sigma)}
\theoremstyle{plain}
\newtheorem{theorem}{Theorem}[section]
\newtheorem{corollary}[theorem]{Corollary}
\newtheorem{lemma}[theorem]{Lemma}
\newtheorem{proposition}[theorem]{Proposition}
\newtheorem{remark}[theorem]{Remark}
\newtheorem*{namedtheorem}{\theoremname}
\theoremstyle{definition}
\newtheorem{definition}[theorem]{Definition}
\title{Pants complex, TQFT and hyperbolic  geometry} 
\author{Renaud Detcherry}
\address{Institut de Mathématiques de Bourgogne, UMR 5584 CNRS, Université Bourgogne Franche-Comté, F-2100 Dijon, France
         }
\email{renaud.detcherry@u-bourgogne.fr}
\author{Efstratia Kalfagianni}
\address{Department of Mathematics, Michigan State University, East
Lansing, MI, 48824, USA}
\email{kalfagia@math.msu.edu}
\begin{document}

\date{\today}

\begin{abstract} We introduce a coarse perspective on relations of  the $SU(2)$-Witten-Reshetikhin-Turaev TQFT,  the   Weil-Petersson geometry of the
Teichm\"uller space, and  volumes of hyperbolic 3-manifolds. Using data from the  asymptotic expansions  of the curve operators in  the skein theoretic version of the $SU(2)$-TQFT, we define the quantum intersection number between pants decompositions of  a closed surface.
We show that the quantum intersection number  admits two sided
bounds in terms of the geometric intersection number and we use it to obtain
a metric on the pants graph of  surfaces. Using work of Brock we show that the pants graph equipped with this metric is quasi-isometric to the Teichm\"uller space with  the  Weil-Petersson metric and that the
translation length of our metric provides  two sided linear bounds on the volume of hyperbolic fibered manifolds. We briefly  discuss how these relations are interpeted
from the view point of $SU(2)$-character varieties of 3-manifolds.

We also obtain a characterization of pseudo-Anosov mapping classes in terms of   asymptotics of the quantum intersection number
under iteration in the mapping class group and relate these asymptotics with  stretch factors. We also discuss how these results fit with
a conjecture of Andersen, Masbaum and Ueno about  quantum representations of mapping class groups.

 \end{abstract}


\maketitle

\section{Introduction}\label{sec:intro} The purpose of the paper is to present a coarse perspective on relations of  asymptotic aspects of the unitary $SU(2)$-Witten-Reshetikhin-Turaev TQFT  to hyperbolic geometry. These connections are motivated and facilitated by the fact that  pant decompositions of surfaces play prominent roles in the
 skein theoretic construction of the $SU(2)$-TQFT, given by
Blanchet, Habegger, Masbaum and Vogel  \cite{BHMV, BHMV2},  and in  the construction of a combinatorial model for the   Weil-Petersson geometry on  Teichm\"uller  spaces given by Brock 
\cite{Brockpants}.

A \emph{pants decomposition} $P$ of a closed oriented  surface $\Sigma$ of genus $g>1$ is a collection of $3g-3$ distinct isotopy classes of simple closed curves on $\Sigma$.
The {\emph{pants graph}}  $\CE$, first considered by Hatcher and Thurston\cite{HT}, is an abstract graph with set of vertices $\CV$ isotopy classes of pants decompositions. Two vertices $P$
and $P'$ are connected by an 
edge in  $\CE$ if $P'$ is obtained from $P$ 
by an \emph{elementary move}. See
Definition \ref{def:emoves}.
 The pants graph  plays important roles in the study of  mapping class groups,  Teichm\"uller  theory, and hyperbolic geometry of surfaces.

For a positive integer $r$ and  a primitive $2r$-root of unity,  the $SU(2)$-Witten-Reshetikhin-Turaev TQFT  \cite{Turaevbook, BHMV2} assigns to $\Sigma$ a finite dimensional Hermitian vector space $V_r(\Sigma)$.
Given a pants decomposition $P,$ for any $r\geq 3$,  \cite{BHMV2} constructs a basis ${\mathcal B}_P:={\mathcal B}^r_P$ of $V_r(\Sigma)$ that is orthonormal with respect to the Hermitian pairing.
For any $Q\in \CV$,
they also construct a family of Hermitian  operators  $T^{Q}_r:  V_r(\Sigma)\longrightarrow V_r(\Sigma),$ known as \emph{curve operators},  so that  ${\mathcal B}_Q$ consists
of eigenvectors of the operator $T^{Q}_r$. 

In this paper we  work with the $TQFT$ theory associated to the roots of unity  $\zeta_r=-e^{\frac{\pi i}{2r}}$.
In this case, the large-$r$  asymptotic behavior of the curve operators was studied by Detcherry
in
\cite{RD}. His results imply that given $Q,P$
 there is an integer $n:=n(Q,P)\geq 0$, so that, for all $r$ larger enough,  the matrix representing  $T^{Q}_r$ in the basis ${\mathcal B}_P$
 has exactly $n$ diagonals  (excluding the main diagonal)  that contain non-zero entries. We call $n(Q,P)$  the \emph{quantum intersection number} of $Q$ with respect to $P$.

Since pants decompositions of surfaces play important roles in both  hyperbolic geometry and in TQFT, it is natural to ask what geometric properties of the pants graph are captured by quantities or invariants arising from TQFT.  
In this paper, we show that the quantum intersection number captures  the coarse geometry relations of the pants graph  to the Teichm\"uller space established in  \cite{Brockpants}.

The spectral radii of the operators $T^{Q}_r$ are uniformly bounded in terms of the genus of $\Sigma$,
and their norms  only depend on the orbit of $Q$ under the action of the mapping class group and not on the Hermitian bases of the spaces $V_r(\Sigma)$.
Intuitively speaking, this suggests that relations of large-$r$ asymptotics  of $T^{Q}_r$  with the geometry of $\CE$, should be reflected in the pairing defined
by the quantum intersection number. As we will discuss in Section \ref{sec:norms}, relations of the quantum intersection number to geometry, also translate to information about the geometric content  of the large-$r$ asymptotics of the curve operator norms.

We consider the quantum intersection number 
as a function $n: \CV\times \CV \longrightarrow \NN$
and 
we use a general process we call
\emph{metrification} to promote it to a metric $d_{qt}$ on $\CV$ such 
 that  the mapping class $\M$ acts  by isometries on  $(\CV, \dn)$. Roughly speaking, the distance  $\dn(P, Q)$ is the smallest total
quantum intersection number over all  finite sequences in $\CV$ from $P$ to $Q$.

 We prove the following theorem showing that the 0-skeleton of  pants graph  equipped with the metric $\dn$ derived from TQFT,  coarsely records the geometry of
Teichm\"uller space  $\TS$ equipped with its 
   Weil-Petersson  metric  $d_{WP}$.
   
\begin{theorem}\label{thm:QI} The metric space $\CV$ with the metric $\dn$ is quasi-isometric to the Teichm\"uller space with its Weil-Petersson  metric $d_{WP}$.

\end{theorem}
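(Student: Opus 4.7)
The plan is to factor the claimed quasi-isometry through Brock's theorem \cite{Brockpants}, which provides a quasi-isometry between the pants graph $\CE$ equipped with its standard edge-path metric $d_{\CE}$ (each elementary move contributing one) and $(\TS,\dwp)$. It then suffices to show that the identity map on the vertex set $\CV$ induces a quasi-isometry $(\CV,\dn) \to (\CE, d_{\CE})$, which decomposes into two comparisons.

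\emph{Upper comparison.} Whenever two pants decompositions $P, P'$ are connected by an elementary move they share all but one curve, and the two differing curves have geometric intersection at most two. By the two-sided bounds between the quantum and the geometric intersection numbers established earlier in the paper, $n(P, P')$ is bounded by a constant depending only on the genus. Summing along an edge-geodesic in $\CE$ and using the definition of $\dn$ as a metrification of $n$ yields $\dn(P, Q) \leq C_1 \, d_{\CE}(P, Q)$.

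\emph{Lower comparison.} Choose a sequence $P = P_0, \ldots, P_k = Q$ in $\CV$ realizing $\dn(P, Q)$ up to a factor of two. Since $n$ takes non-negative integer values and vanishes only on the diagonal (because $\mathcal{B}_P$ is an eigenbasis for $T^Q_r$ essentially only when $P = Q$), one may assume $P_j \neq P_{j+1}$, so $n(P_j, P_{j+1}) \geq 1$ and hence $k \leq 2\,\dn(P, Q)$. Applying the two-sided bound in reverse,
\[
\sum_j i(P_j, P_{j+1}) \leq C_2 \, \dn(P, Q).
\]
The key topological input is a linear bound $d_{\CE}(A, B) \leq C_3 \, i(A, B) + C_3$, established by iteratively resolving intersections between $A$ and $B$ via elementary moves in $\CE$. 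Summing this estimate over the sequence and absorbing the additive contributions via the bound $k \leq 2\,\dn(P,Q)$ produces $d_{\CE}(P, Q) \leq C_4 \, \dn(P, Q)$.

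Combining the two comparisons gives the required quasi-isometry, and composing with Brock's theorem completes the proof. I anticipate the main obstacle to be the topological estimate $d_{\CE}(A, B) \lesssim i(A, B)$: the resolution-of-intersections argument should deliver it, but one must verify that each surgery step can be realized as a controlled number of elementary moves in $\CE$ itself, rather than merely moves in a finer curve-complex-like graph; if only a weaker (e.g.\ polylogarithmic) bound is available, the strategy would refine by exploiting that small $n$ forces the two decompositions to agree on a large sub-multicurve, restricting the pants-distance directly.
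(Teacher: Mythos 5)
Your proposal follows essentially the same route as the paper: the paper proves Theorem \ref{thm:PathTQFT} comparing $\dn$ with the path metric $d_{\pi}$ by exactly your two comparisons (elementary moves have quantum intersection number $2$ by Proposition \ref{prop:ASmovecomputation} for the upper bound; the lower bound of Theorem \ref{thm:inequalitiesgeneral} together with the linear estimate $d_{\pi}(P,Q)\le B\,I(P,Q)$ for the lower bound), and then composes with Brock's quasi-isometry (Theorem \ref{thm:pathPW}). The topological estimate you flag as the main obstacle is precisely Proposition \ref{pro:pathintersection}, which the paper does not prove by resolving intersections but instead cites from Brock's work and Masur--Minsky (with effective versions due to Aougab--Taylor--Webb and Choi--Rafi), so your plan is sound once that citation replaces the proposed surgery argument.
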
   
   
   The \emph{Bers constant} $L=L(g)$ is a  number so that  any hyperbolic surface $X$ topologically equivalent to $\s$, admits  at least one pants decomposition $P_X$
   with the length of all curves at most $L$. By construction, Theorem  \ref{thm:QI} gives that  for any $P, Q\in \CV$, the distance $\dn(P_X, P_Y)$ is within bounded ratio from the distance $d_{WP}(X,Y)$, and the bounds depend only on $\s$.
 For more details, and for the definitions of terms and objects in the statement of Theorem \ref{thm:QI}, the reader is referred to Section \ref{sec:TSconnections}. 
        
 Theorem \ref{thm:QI} leads to new relations of asymptotic elements of the Witten-Reshetikhin-Turaev  $SU(2)$-TQFT to volumes of hyperbolic 3-manifolds.
Recall that  the translation length
 of an isometry $\phi$ on a metric space $(X, \ d)$ is defined by
 $$L^{d}(\phi)={\rm{inf}} \{ d(\phi(x), x)\ \ | \ \ x\in X\}.$$
 Since mapping classes $\phi \in \M$ act as isometries on $(\CV, \ \dn)$ and on $(\TS, \ \dwp )$ we can consider the translation lengths
 $ L^{\dn}(\phi)$ and $L^{\dwp}(\phi)$, respectively.
 Theorem \ref{thm:QI} implies that for any  $\phi \in \M$ the translation lengths
 $ L^{\dn}(\phi)$ and $L^{\dwp}(\phi)$ are within bounded ratios from each other, with bounds  depending only on the topology of $\s$.
 
 If $\phi$ is a pseudo-Anosov mapping class, the mapping torus $M_{\phi}:=F \times [0,1]/_{(x,1)\sim ({ {\phi}}(x),0)}$
 is a hyperbolic 3-manifold that fibers over the circle. In this case, Brock \cite{Brockvol} proved that the hyperbolic volume $\vol(M_{\phi})$ and
 the translation length  $L^{\dwp}(\phi)$ are  within bounded ratios from each other, with bounds depending only on  $\s$.
 Combining this result with Theorem \ref{thm:QI}   we obtain the following:
   
    \begin{theorem} \label{thm:volume}There exist  a positive  constant $N$, depending only on  $\s$,  so that for any  pseudo-Anosov mapping class $\phi \in \M$
 we have
 $${\frac{1}{N}}\ L^{\dn}(\phi)\leq \vol(M_{\phi}) \leq N\   L^{\dn}(\phi).$$
  \end{theorem}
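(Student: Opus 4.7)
The plan is to deduce Theorem \ref{thm:volume} from Theorem \ref{thm:QI} together with Brock's volume theorem \cite{Brockvol}, which asserts that for any pseudo-Anosov $\phi \in \M$ the hyperbolic volume $\vol(M_\phi)$ and the Weil--Petersson translation length $L^{\dwp}(\phi)$ are within bounded ratios, with constants depending only on $\s$. It therefore suffices to establish that $L^{\dn}(\phi)$ and $L^{\dwp}(\phi)$ are within bounded ratios, uniformly over pseudo-Anosov $\phi$, with constants depending only on $\s$; the theorem then follows by composing the two comparisons.

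First I would apply Theorem \ref{thm:QI} to obtain a quasi-isometry $f \co (\CV, \dn) \to (\TS, \dwp)$. The key general principle is that translation lengths of isometries are coarsely preserved by any coarsely equivariant quasi-isometry between metric spaces on which a group acts by isometries. Since the metrification process defining $\dn$ is intrinsically $\M$-invariant, and since Brock's identification of the pants graph with Teichm\"uller space underlying Theorem \ref{thm:QI} is $\M$-equivariant up to bounded error, the map $f$ is coarsely $\M$-equivariant. A standard triangle inequality argument, applied to $f$ and to its quasi-inverse, then yields
$$\tfrac{1}{K}\, L^{\dn}(\phi) - C \ \leq\ L^{\dwp}(\phi) \ \leq\ K\, L^{\dn}(\phi) + C,$$
with $K, C > 0$ depending only on $\s$.

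To upgrade these inequalities to the purely multiplicative comparison required by Theorem \ref{thm:volume}, I would invoke the universal positive lower bound on volumes of hyperbolic $3$-manifolds provided by the Margulis lemma, giving $\vol(M_\phi) \geq V_0 > 0$. Brock's theorem then forces a uniform lower bound $L^{\dwp}(\phi) \geq L_0(\s) > 0$, and, transported through the quasi-isometry, a positive lower bound on $L^{\dn}(\phi)$ as well. In the presence of such lower bounds the additive constant $C$ can be absorbed into the multiplicative factor, producing a bilipschitz comparison $L^{\dn}(\phi) \asymp L^{\dwp}(\phi)$ on pseudo-Anosov classes. Composing this with Brock's $L^{\dwp}(\phi) \asymp \vol(M_\phi)$ supplies the constant $N$ in the theorem.

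The main obstacle is the verification of coarse $\M$-equivariance of the quasi-isometry of Theorem \ref{thm:QI}, which is what makes the translation lengths transport. I expect this to follow routinely from the mapping class group invariance of the metrification procedure combined with the equivariance of Brock's pants graph model, but it is the single nontrivial input beyond the cited results. Once it is confirmed, the remainder is a direct assembly of known estimates and a constant-absorbing trick using Margulis's lower bound.
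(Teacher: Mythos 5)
Your proposal follows essentially the same route as the paper: its proof of Theorem \ref{thm:volume} is exactly Brock's comparison of $\vol(M_{\phi})$ with $L^{\dwp}(\phi)$ combined with the quasi-isometry of $(\CV,\dn)$ with $(\TS,\dwp)$ from Theorem \ref{thm:WPQT}, from which the multiplicative comparison of translation lengths is extracted; your explicit absorption of the additive constants (via the Margulis lower bound on volume, hence a uniform lower bound on $L^{\dwp}$) is in fact more careful than the paper's ``after re-adjusting the constants.'' One small repair: a positive lower bound on $L^{\dn}(\phi)$ cannot literally be ``transported through the quasi-isometry,'' since the additive constant may exceed the Weil--Petersson lower bound; but the needed bound is immediate from the paper itself, because $\dn(P,Q)\geq 2$ whenever $P\neq Q$ (Proposition \ref{prop:metricproperties}(c) together with Proposition \ref{prop:ASmovecomputation}, as in Lemma \ref{lem:itsmetric}) and a pseudo-Anosov class fixes no pants decomposition, so $L^{\dn}(\phi)\geq 2$.
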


Theorems \ref{thm:QI} and \ref{thm:volume} 
provide general new relations between  features of the $SU(2)$-TQFT  and hyperbolic geometry, approached from the view point  of coarse geometry of
 the Teichm\"uller space. In particular, Theorem  \ref{thm:volume} relates 2-dimensional TQFT aspects to 3-dimensional hyperbolic geometry which
 is compatible with the  TQFT philosophy that the quantum invariants of mapping tori are  determined by invariants of the corresponding monodromies.

Open  conjectures in quantum topology, predict  finer and exact relations between  features of the $\mathrm{SU}(2)$-TQFTs and   3-manifold  hyperbolic geometry.
For instance, the  volume conjecture of Kashaev asserts that  large-$r$ asymptotics of values of the colored Jones polynomial of hyperbolic knots determine the volume of the knot complement. More recently,
Chen and Yang \cite{Chen-Yang} conjectured that  similar asymptotics of the Witten-Reshetikhin-Turaev and the related Turaev-Viro invariants, determine the volumes of hyperbolic 3-manifolds. 
Similarly, a conjecture of Bonahon-Wong-Yang predicts that the volume of hyperbolic mapping tori is determined by asymptotics
of certain $SL_2(\CC)$-quantum invariants of pseudo-Anosov mapping classes \cite{BWY, BW}.
In the last couple of decades, 
work by several authors has verified and refined these conjectures  \cite{DKY, MR2797089, BDKY, Gromov, yangsurvey, KuM}  for special families of 3-manifolds 
but  general approaches seem to be currently missing. Theorems \ref{thm:QI} and \ref{thm:volume}, and corollaries stated in Section \ref{sec:TSconnections},
are compatible with and support the general picture of relations  predicted by these conjectures.


\subsection{Geometric and  quantum intersection numbers} 
Given an  integer  $r\geq 3$, let ${\mathcal C}_r:=\{ 1, 2,\ldots, r-1\}$ and let   $\zeta_r=-e^{\frac{\pi i}{2r}}$. Given a pants decomposition and a \emph{dual banded graph} $\Gamma$, with set of edges $E$, \cite{BHMV2}
considers  the set $U_r$ of  \emph{admissible colorings}, which are certain functions 
${\bf c}: E\longrightarrow {\mathcal C}_r$.
To an admissible coloring ${\bf c}$, they associate a vector $\phi_{{\bf c}}$ in  $V_r(\Sigma)$ so that
the set ${\mathcal B}_P=\{\phi_{{\bf c}}\}_{{\bf c}}$ is an orthonormal basis of $V_r(\Sigma)$.
For any ${\bf c}\in U_r$,  the function $\frac{\bf c}{r}$ is an element in the space ${\RR}^{E}$
of functions from $E$ to $(0,1)$.

For any admissible coloring ${\bf c}$, the pair
$(\frac{\bf c}{r},  \frac{ 1}{r})$ is an element 
of ${\RR}^{E}\times [0, 1]$.
Detcherry  \cite{RD} 
showed that,  for any  pants decompositions $Q$, $P$,   there is an open set $V_r\subset {\RR}^{E}\times [0, 1]$,
and analytic functions
$$G_{\bf k}^{Q}=G_{\bf k}^{Q}(\frac{\bf c}{r},  \frac{ 1}{r}) : V_{\gamma} \longrightarrow \CC,$$ parametrized by functions ${\bf  k}: E\longrightarrow \ZZ, $
so that 
\begin{equation}\label{eq:opcoefficients}
T^{Q}_r (\phi_{{\bf c}})=\underset{{\bf  k}}{\sum} G_{\bf k}^{Q}(\frac{c}{r},\frac{1}{r})\phi_{{{\bf c}+{ \bf k}}}.
\end{equation}
Furthermore, for given $P,Q$ and $\Gamma$,
the family $\{G_{\bf k}^{Q}\}_{{\bf k\neq 0}}$ contains finitely many non-zero functions. 
The number of non-zero functions, denoted by
 $n(Q, \ P)$, 
is the quantum intersection number of $Q$ with respect to $P$.

A key ingredient  in the proofs of Theorems \ref{thm:QI} and \ref{thm:volume} is the following result.

\begin{theorem}\label{thm:inequalities} Given pants decompositions $P$ and $Q$, let $I(Q,P)$ denote their total geometric intersection number. Then,
$$
\frac{I(Q,P)}{3g-3}\leq  n(Q, P)  \leq (I(Q,P)+1)^{3g-3}-1.
$$
\end{theorem}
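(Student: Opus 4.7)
My plan is to exploit the product structure of the curve operator. Since the components of $Q = \{q_1, \ldots, q_{3g-3}\}$ are pairwise disjoint, they commute in the Kauffman bracket skein algebra, so $T^{Q}_r = \prod_{j} T^{q_j}_r$. From the fusion description underlying \cite{BHMV2} and used in \cite{RD}, each individual operator $T^{q_j}_r$ has support in the basis $\mathcal{B}_P$ on shifts ${\bf k}^{(j)}$ satisfying $|k^{(j)}_e| \leq i(q_j, p_e)$ and $k^{(j)}_e \equiv i(q_j, p_e) \pmod{2}$ for every edge $e$ of the dual banded graph, since each intersection of $q_j$ with the pants curve $p_e$ contributes a $\pm 1$ shift of the color on $e$ after fusion. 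Multiplying and summing shifts, the support of $T^{Q}_r$ lies in the set of ${\bf k}$ with $|k_e| \leq i(Q, p_e)$ and $k_e \equiv i(Q, p_e) \pmod{2}$.

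For the upper bound, the cardinality of this allowed shift set is $\prod_e (i(Q,p_e)+1) \leq (I(Q,P)+1)^{3g-3}$. Since $g>1$ gives at least $3g-3 \geq 3$ edges, equality in the latter bound would force every $i(Q,p_e)$ to equal $I(Q,P)$, which combined with $\sum_e i(Q, p_e) = I(Q,P)$ forces $I(Q,P)=0$; for $I(Q,P)>0$ the strict integer inequality $\prod_e (i(Q,p_e)+1) \leq (I(Q,P)+1)^{3g-3}-1$ holds, and the case $I(Q,P)=0$ is handled trivially. This bounds $n(Q,P)$.

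For the lower bound, I would pick an edge $e^*$ with $i(Q, p_{e^*}) = \max_e i(Q, p_e)$, so $i(Q, p_{e^*}) \geq I(Q,P)/(3g-3)$ by averaging, and show $n(Q,P) \geq i(Q, p_{e^*})$. The $e^*$-coordinates of shifts in the support of $T^{Q}_r$ range over $\{-i(Q,p_{e^*}), -i(Q,p_{e^*})+2, \ldots, i(Q,p_{e^*})\}$, a set of size $i(Q,p_{e^*})+1$. If each value $m$ in this set is attained by some ${\bf k}$ for which $G_{\bf k}^{Q}$ is not identically zero, then the resulting $i(Q,p_{e^*})+1$ shifts are distinct, at most one of them equals ${\bf k}={\bf 0}$, and $n(Q,P) \geq i(Q, p_{e^*})$ follows. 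The extreme values $m = \pm i(Q,p_{e^*})$ are realized by a unique coherent fusion state at the intersection points of $Q$ with $p_{e^*}$, whose coefficient is a non-vanishing product of quantum factors; for intermediate values I would invoke the leading-order symbol of the curve operator from \cite{RD}.

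The main obstacle is this last non-vanishing claim for intermediate values of $m$, where several fusion states contribute to $G_{\bf k}^{Q}$ and could in principle cancel. Because these coefficients are analytic functions of $(\frac{{\bf c}}{r}, \frac{1}{r})$, it suffices to verify non-vanishing at a single point of the domain. I would accomplish this by extracting the leading-order semiclassical symbol and checking that its restriction to each $k_{e^*}$-slice is not identically zero, via the explicit fusion formulas of \cite{BHMV2, RD}.
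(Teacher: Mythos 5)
Your upper bound is fine: it is essentially Lemma \ref{lemma:upper} of the paper (support of $T^Q_r$ constrained coordinatewise by part (2) of Theorem \ref{thm:Asymptotic}), with the $-1$ absorbed by a small counting argument instead of by discarding the diagonal term; either way works. Your lower-bound skeleton also matches the paper's: pick an edge $e_0$ realizing $M(Q,P)=\max_e I(Q,\alpha_e)\geq I(Q,P)/(3g-3)$ and exhibit, for each admissible value $m$ of the shift on $e_0$, a nonzero coefficient $G_{\bf k}^{Q}$ with ${\bf k}(e_0)=m$.

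The genuine gap is exactly the step you flag and then defer: non-vanishing for the intermediate values of $m$. Saying you will "invoke the leading-order symbol from \cite{RD} and check that its restriction to each $k_{e_0}$-slice is not identically zero" is not an argument — the leading-order symbol is the Fourier expansion of the trace function $f_Q$, and the assertion that its $k_{e_0}$-slices are nonzero is precisely the statement to be proved; several fusion states contribute to each such coefficient and cancellation must be ruled out by an actual mechanism. (Your claim for the extreme values $m=\pm I(Q,\alpha_{e_0})$ is correct, but that is the previously known Charles--March\'e input recalled in Remark \ref{rem:CM}, which only yields bounds of the form $2^{m_\gamma}$, not the linear bound.) Note also that "some ${\bf k}$ with ${\bf k}(e_0)=m$" is too vague: the paper must choose the remaining coordinates to be the maximal shifts ${\bf k}(e)=I(Q,\alpha_e)$ for $e\neq e_0$ (Theorem \ref{thm:lower}), because this forces the state values on all other curves (Lemma \ref{properties}) and constrains come-back arcs (Lemma \ref{lem:come-backArcs}), which is what makes the non-cancellation tractable. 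The actual proof then requires the Dehn--Thurston/arc-system state sum (Proposition \ref{prop:statesum}), the Laurent polynomial structure $C(\delta)=\Delta_{++}^n\Delta_{+-}^m P_{\delta}(z)$ (Proposition \ref{prop:polynomial}), a sign-coherence analysis of the top-degree-in-$z$ states, treated separately when $e_0$ is a loop edge (Lemmas \ref{lem:maxdegree} and \ref{lem:maxdegreeCase2}, giving Proposition \ref{prop:nozeropoly}), and finally an evaluation at a transcendental $e^{i\pi\theta/2}$ to pass from $P_{\delta}\neq 0$ in ${\mathbb Q}[z^{\pm 1}]$ to $G_{{\bf k}_\sigma}^{Q}\neq 0$. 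None of this is supplied or replaced by an alternative in your proposal, so as it stands the lower bound is not proved.
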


We stress that the definition of $n(Q, P)$ is  not symmetric in $P, Q$. Nevertheless, using the process of metrification, described in Appendix
\ref{sec:metrification}, we define a metric $\dn$ on $\CV$ out of the quantum intersection number. 

The  \emph{path metric} $\dpa$ on the pants graph $\CE$   is defined by assigning the value 1 to each edge of $\CE$. Work of Mazur and Minsky
on the geometry of curve complexes, and Theorem \ref{thm:inequalities}  can be used to show that the distances $\dpa(P, Q)$ and $\dn(P, Q)$ are within bounded ratio 
from each other. See Theorem \ref{thm:PathTQFT}.
This, in turn, combined with Brock's result  stating that $(\CV, \  \dpa) $ is quasi-isometric with the Teichm\"uller space  $\TS$ equipped with its 
 Weil-Petersson  metric  $d_{WP}$,  gives Theorem \ref{thm:QI}.

The proof of Theorem \ref{thm:inequalities} relies on quantum topology techniques. A key part is to use
 the Masbaum-Vogel fusion theory underlying the $SU(2)$ skein theory to find \emph{state-sum} formulae for the functions $G_k^{Q}(\theta,0)$.
 The proof is given in Sections 4 and 5.

As a corollary of the proof of Theorem \ref{thm:volume} we get the following result.

 \begin{corollary} \label{cor:content}There  is a constant $N>0$, only depending  on the topology of $\s$, so that for any pseudo-Anosov  mapping class $\phi \in \M$ and any $P\in \CV$ we have 
 $$ n(P, \phi(P)) \geq N \  \vol(M_{\phi}).$$
 
 \end{corollary}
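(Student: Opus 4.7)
The corollary is essentially a repackaging of the chain of inequalities that gives Theorem \ref{thm:volume}, combined with the tautological relation between $n$ and the metric $\dn$ produced from it by metrification. The plan is to fix a pseudo-Anosov $\phi \in \M$ and $P \in \CV$ and chain together three ingredients: (i) $\vol(M_\phi) \leq N_1 \, L^{\dn}(\phi)$, coming from the right-hand inequality of Theorem \ref{thm:volume}; (ii) $L^{\dn}(\phi) \leq \dn(P, \phi(P))$, which is immediate from the definition of translation length as an infimum over $\CV$; and (iii) $\dn(P, \phi(P)) \leq C \, n(P, \phi(P))$, which is built into the construction of $\dn$ from $n$.

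For step (iii), recall that the metrification of Appendix \ref{sec:metrification} produces $\dn(P,Q)$ as an infimum of $n$-lengths of finite sequences in $\CV$ joining $P$ to $Q$. Taking the trivial two-term sequence already gives $\dn(P, \phi(P)) \leq n(P, \phi(P))$, which handles the case where the metrification uses the directed sum. If the construction instead symmetrizes $n$ first, one obtains $\dn(P, \phi(P)) \leq n(P, \phi(P)) + n(\phi(P), P)$; since $n$ is $\M$-invariant one has $n(\phi(P), P) = n(P, \phi^{-1}(P))$, and since $\vol(M_\phi) = \vol(M_{\phi^{-1}})$ one applies the same chain of inequalities to $\phi^{-1}$ and combines the two bounds, absorbing the asymmetry into the constant.

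Putting (i)--(iii) together yields $\vol(M_\phi) \leq N_1 C \, n(P, \phi(P))$, so the corollary holds with $N = 1/(N_1 C)$. The genuine geometric content sits entirely in Theorem \ref{thm:volume}; the rest is bookkeeping. The only point that requires care is step (iii): the precise form of the metrification determines whether the bound involves $n(P, \phi(P))$ alone or the pair $\{n(P, \phi(P)), n(\phi(P), P)\}$, and this is what the $\M$-invariance and $\vol(M_\phi) = \vol(M_{\phi^{-1}})$ remark above is designed to handle.
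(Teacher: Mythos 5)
Your proposal is correct and follows essentially the same route as the paper: it chains the inequality $\vol(M_\phi)\leq N\,L^{\dn}(\phi)$ of Theorem \ref{thm:volume} with $L^{\dn}(\phi)\leq \dn(P,\phi(P))$ (definition of translation length) and the metrification bound $\dn(P,\phi(P))\leq \nu(P,\phi(P))\leq n(P,\phi(P))$, which is exactly the paper's Lemma \ref{lem:bound}. Your hedging in step (iii) is unnecessary but harmless, since the paper's metrification symmetrizes by taking $\min(n(P,Q),n(Q,P))$, so the bound holds with $C=1$.
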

 
 The quantum intersection number
 $ n(\gamma, P)$ is defined for any multicurve $\gamma$ on $\s$ and $P\in \CV$, and Theorem \ref{thm:inequalities} holds. In this setting,
 it follows that quantum intersection numbers can be used to detect the Nielsen-Thurston types of mapping classes and to estimate stretch factors
 of pseudo-Anosov mapping classes.
 The reader is referred to  Section \ref{sec:TSconnections}.

\subsection{Relation to the AMU conjecture.} Recall that for any  $r\geq 3$ and  $\zeta_r=-e^{\frac{\pi i}{2r}}$ the $SU(2)$-TQFT also supplies a linear representation
$\rho_r$ of $\M$ on the space $V_r(\Sigma)$.
An intriguing open conjecture of Andersen, Masbaum and Ueno \cite{AMU} asserts that for any pseudo-Anosov $\phi \in \M$, the image $\rho_r(\phi)$ has infinite order for all large enough levels $r$.

For 
a multicurve $\gamma$ and $P\in \CV$, let $r_0(\gamma, P)>0$ denote the smallest integer such that for any lever $r\geq r_0(\gamma, P)$ 
  the matrix representing  $T^{\gamma}_r$ in the basis ${\mathcal B}_P$, realizes $n(\gamma, P)$.

\begin{corollary} \label{amu} Let $\phi$ be a  pseudo-Anosov mapping class such that there is  a multicurve $\gamma$ and $P\in \CV$
so that  $\underset{m\to\infty}{\limsup} \sqrt[m]{r_0(\phi^{m}(\gamma),P)} =l< \infty$. Suppose that $\phi$ fails the AMU conjecture. That is, there exists
an infinite sequence of positive integers $\{m_r\}_{r\to \infty}$ such that
$\rho_r((\phi)^{m_r})=1$.  Then,  $\underset{m_r\to\infty}{\limsup} \sqrt[m_r]{r} <l$.
\end{corollary}



To the best of our knowledge, at this writing, there isn't even a single example of a pseudo-Anosov mapping classes  acting on a closed surface for which
 the AMU conjecture is verified. The authors of this paper, used their work on the asymptotics of Turaev-Viro 3-manifold invariants to give constructions of vast families
 of pseudo-Anosov mapping classes, acting on any surface with boundary, that satisfy  the AMU conjecture \cite{BDKY, DK:AMU, DKI, KMe}. However, these techniques have not so far been successful for the case of closed surfaces. Corollary \ref{amu}, which follows from  the general version of Theorem \ref{thm:inequalities}, 
 shows that the problem can potentially  be approached
 through the study of quantum intersection numbers.
 
  \smallskip

 \subsection{Fourier coefficients of trace functions and volume} The works of  Sikora-Przytycki \cite{BFK, SikP} and Turaev \cite{Turskein} 
 provide a way to view the
  TQFT spaces $\{ V_r(\Sigma)\}_r$ as quantizations of the character variety 
$\mathcal{M}(\Sigma)=\mathrm{Hom}(\pi_1(\Sigma),\mathrm{SU}(2))/\mathrm{SU}(2)$ along the  Goldman Poisson structure.
In this setting,  the curve operators 
$\{T^{Q}_r\}_r$  form quantizations, in the sense of   \cite{RD}, of  the trace functions  $f_{Q}$ defined by $$f_{Q}(\rho)= \underset{\gamma \in Q}{\prod}\left(-\mathrm{Tr}(\rho(\gamma))\right),$$ for any $\rho \in \mathcal{M}(\Sigma)$.
More specifically, the composition and the commutator of
operators correspond to the product and  Goldman's Poisson bracket of the trace functions.
A pants decomposition $P,$ viewed as a family of $3g-3$ curves, induces a natural action of a torus on $\mathcal{M}(\Sigma),$  which is the hamiltonian flow of the moment map
 $$h_P: \rho \longmapsto \left(\frac{1}{\pi}\arccos(\mathrm{Tr}(\rho(\alpha)))\right)_{\alpha \in P},$$ defined in  \cite{G86}. Given $P,Q$, the trace function $f_{Q}$ has a Fourier decomposition with respect to the torus action defined by $P$ and
 by \cite{RD}, the coefficients $G_k^{Q}(\theta,0)$ correspond to the Fourier coefficients $f_{Q}$.
 In this setting, Theorem \ref{thm:inequalities}  shows that the number of non-zero Fourier coefficients of the trace functions $f_{Q}$
grows as the intersection number of $P, Q$ and it improves 
the bound of $2^{3g-3}$ that was previously given in  \cite{CM}.  
From this view point, Theorem \ref{thm:volume} reveals a previously unnoticed relation of  Goldman's picture to volumes of hyperbolic 3-manifolds,
while the proof relies on purely quantum topological techniques.
\smallskip

 \noindent{\bf{Acknowledgements.}}
We thank Dave Futer, Saul Schleimer and Sam Taylor for helpful discussions about references on Proposition \ref{pro:pathintersection}. 
During the course of this work, Detcherry was partially supported by the project “AlMaRe” (ANR-19-CE40-0001-01) and by the EIPHI Graduate School
(ANR-17-EURE-0002) and Kalfagianni was partially
supported by NSF grants DMS-1708249, DMS-2004155 and  DMS-2304033.

\section{TQFT curve operators and their coefficient functions} \label{sec:two}
In this section, after recalling the setting from \cite{BHMV2} and the results we need from \cite{RD},  we will give
the precise definition of the quantum intersection number,  state a general version of Theorem \ref{thm:inequalities} and outline the strategy of its proof.
\subsection{Preliminaries and definitions}
Let $\Sigma$ be a closed surface of genus $g\geq  1$.  A \emph{multicurve} on $\Sigma$ is a collection of disjoint simple closed curves on $\Sigma$ and
a \emph{pants decomposition} is a multicurve with $3g-3$ curves that decomposes $\Sigma$ into pants (3-holed 2-spheres).
Given a pants decomposition $P$  and a  trivalent graph  $\Gamma$ embedded on $\Sigma$ we will say that  $P$ and $\Gamma$ are \emph{dual} 
(or $\Gamma$ is dual to $P$) if each vertex of $\Gamma$ lies in exactly one pants of $P$, and each edge of  $\Gamma$ intersects a single curve of $P$ exactly once.
When we want to stress the fact that $\Gamma$ is dual to $P$, we will use the notation $\Gamma_P$.

Given a pants decomposition $P$ with dual graph $\Gamma$  let $E$ denote the set of edges of $\Gamma$.
We will use the notation $P=\{ \alpha_e \  |\  e\in E\}$, where $ \alpha_e$ denotes the curve in $P$ that intersects the edge $e$ of $\Gamma$. Following \cite{BHMV2},
we assume that $\Gamma$ is 
\emph{banded} (i.e.  thickened with a cyclic ordering at each vertex) 
and that $\s$ is obtained as a double of $\Gamma$.
 
Recall that for any integer $r\geq 3$,  we defined ${\mathcal C}_r=\{ 1, 2,\ldots, r-1\}$. For $\zeta_r=-e^{\frac{\pi i}{2r}}$, the $SU(2)$-Witten-Reshetikhin-Turaev TQFT
 \cite{RETu, Turaevbook, witten} assigns to $\Sigma$ a finite dimensional $\CC$-vector space $V_r(\Sigma)$ with
 a non-degenerate Hermitian pairing
$$\langle\ \  \rangle: V_r(\Sigma)\times V_r(\Sigma) \longrightarrow \CC.$$

\begin{definition} \label{def:admissible} An $r$-\emph{admissible coloring} of $\Gamma$
is a function ${\bf c}: E\longrightarrow {\mathcal C}_r$ such that 
for any triple of edges $e,f, g$ adjacent to the same vertex of $\Gamma$, we have
\begin{itemize}

\item [(i)]  ${c}_e+{c}_f+{c}_g<2r$ and ${c}_e+{c}_f+{ c}_g$ an odd number;  and
\item  [(ii)] ${c}_e<{c}_f+{ c}_g$, ${ c}_f<{c}_e+{ c}_g$ and ${c}_g<{ c}_e+{c}_f$.  
\end{itemize}
Let $U_r:=U_r(\Gamma)$ denote the set of all $r$-admissible colorings of $\Gamma$.
\end{definition}

\begin{remark}\label{twovsone}{\rm
We warn the reader that, compared to \cite{BHMV2}, we shifted all colors by $1,$ following the conventions of \cite{MP} for example. Hence, for instance, the color $2$ corresponds to the fundamental $U_q(sl_2)$-representation of dimension $2$.}
\end{remark}

By  \cite{BHMV2} the number of admissible colorings is independent of the choice of $P$  or $\Gamma$ and it is equal to the  dimension of $V_r(\Sigma)$:
Given ${\bf c}\in U_r$ the authors in \cite{BHMV2} give a construction that associates a vector $\phi_{{\bf c}}$ in  $V_r(\Sigma)$, such that 
the set ${\mathcal B}_P:={\mathcal B}^r_P=\{\phi_{{\bf c}} \ \ | \ \ {\bf c}\in U_r\}$ is basis of $V_r(\Sigma)$ that is orthonormal with respect to the Hermitian pairing. Throughout the paper we will refer to 
${\mathcal B}_P$ as the orthonormal basis corresponding to $P$ (or sometimes to $\Gamma$).
\vskip 0.05in 

Given a multicurve $\gamma$, \cite{BHMV2} assigns Hermitian operators  $T^{\gamma}_r:  V_r(\Sigma)\longrightarrow V_r(\Sigma),$ known as \emph{curve operators}:
The spaces $V_r(\Sigma)$ are certain quotients of a version of the Kauffman skein module $K(H,  \zeta_r)$, where $H$ is a handlebody with $\partial H=\Sigma$.  The
 product on $K(H,  \zeta_r)$, where for any  $w\in K(H,  \zeta_r)$ the skein  $\gamma . w$ is the result of stacking $\gamma$ above $w$, descents to the Hermitian  operator $T^{\gamma}_r$
 on  $V_r(\Sigma)$. We recall two key properties that we need here, and we refer the reader to \cite[Section 2]{RD} and references there in for more details.
 \begin{itemize}
\item [-] The map $K(\Sigma,  \zeta_r)\longrightarrow \mathrm{End}( V_r(\Sigma))$ where $\gamma \longrightarrow T^{\gamma}_r$ is an algebra morphism.
\vskip 0.04in

\item [-] Let $\gamma$ is a multicurve such that each curve in it is parallel to some  $\alpha_e\in P$, where $P$ is a pants decomposition of $\s$. Then, for any vector
$\phi_{{\bf c}}$ of  the basis corresponding to $P$ we have
\begin{equation} \label{eq:eigenvectors}
T^{\gamma}_r(\phi_{{\bf c}})=\underset{e\in E}{\prod} \left(-2\cos{\frac{\pi c_e}{r}} \right)^{n_e}     \phi_{{\bf c}},
\end{equation}
where $n_e\geq 0$ is the number of curves  in $\gamma$ that are parallel to $\alpha_e \in P$.

\end{itemize}

The large-$r$  behavior of curve operators for closed surfaces of genus at least two was studied  by the first author \cite{RD}. To recall the results from \cite{RD} that we will use in this paper, we need some preparation.

Fix a  decomposition $P$ with dual graph $\Gamma$ and  corresponding basis ${\mathcal B}_P=\{\phi_{{\bf c}} \ \ | \ \ {\bf c}\in U_r\}$.
Let $ {\RR}^{E}$ denote the set consisting of all $(3g-3)$-tuples, defined by functions  ${\bf \tau}: E \longrightarrow (0, 1)$. Consider the subset $U \subset {\RR}^{E}$ consisting of all $(3g-3)$-tuples, defined by functions such that  for any triple of edges $e,f, g$ adjacent to the same vertex of $\Gamma$ we have 
 \begin{itemize}
 \item[(i)] ${ {\tau}}_e+{ \tau}_f +\tau_g<2$; and
\item  [(ii)]  ${\tau}_e<{\tau}_f+{\tau}_g$, ${\tau}_f<{\tau}_e+{\tau}_g$ and ${\tau}_g<{\tau}_e+{\tau}_f$.
  \end{itemize}
 
By Definition \ref{def:admissible}, for every $r\geq 3$ 
and any admissible coloring ${\bf c}\in U_r,$  
we have $\tau:=\frac{{\bf c}}{r} \in U,$ where $\tau_e:=\frac{ c_e}{r}$, for all $e\in E$.

We recall that given  ${\bf c}\in U_r$ the article \cite{RD} defines a function
$\overline{\bf c}: H_1(\Sigma, \ZZ/{2\ZZ})\longrightarrow \{\pm 1\} $, as follows:  Let $L_{\bf c}$ be a multicurve on $\Sigma$ consisting of $c_e-1$ parallel strands along each edge $e$ of $\Gamma,$ joined at vertices without crossing. As $\Sigma$ is obtained by gluing together two copies of $\Gamma$
along corresponding boundary components, let $\tilde{L_{\bf c}}$ consist of two copies in $L_c,$ one in each copy of $\Gamma.$
For $\gamma$  a multicurve on $\Sigma $, we define $\overline{\bf c}(\gamma)=(-1)^{I (\tilde{L_{\bf c}},  \gamma)},$  where
$I (\tilde{L_{\bf c}},  \gamma)$ is the total geometric intersection number of $\gamma$ and $\tilde{L_{\bf c}}$. 

\begin{theorem}\label{thm:Asymptotic} \cite[Theorem 1.1]{RD} Fix $P$, $\Gamma$ and $U$ as above. Given a multicurve $\gamma$, there is an open subset $V_{\gamma}\subset U\times [0, 1]$ containing $U\times \lbrace 0 \rbrace,$ and unique analytic functions
$$G_{\bf k}^{\gamma}: V_{\gamma} \longrightarrow \CC, \ \ {\rm where} \  \ {\bf  k}: E\longrightarrow \ZZ, $$
 such that we have:
 \begin{enumerate}
\item For any ${\bf c}\in U_r$ such that $(\frac{\bf c}{r},\frac{1}{r}) \in V_{\gamma},$ we have $G_{\bf k}^{\gamma}(\frac{\bf c}{r},  \frac{ 1}{r})= \overline{\bf c}(\gamma)\langle T^{\gamma}_r( \phi_{{\bf c}}), \ \phi_{{\bf c}+{\bf k}}\rangle$ for any ${\bf c}\in U_r$.
 \item We have $G_{\bf k}^{\gamma}=0$ for any ${\bf  k}$ for which there is  $e\in E$ such that $|{\bf  k}(e)|>I(\alpha_e, \gamma)$ or  ${\bf  k}(e)\not\equiv I(\alpha_e, \gamma)(\rm{mod}\ 2)$.
 Here, $\alpha_e$ is the curve in $P$ that corresponds to $e$ and $I(\alpha_e, \gamma)$ is the geometric intersection number of $\alpha_e, \gamma$.
  \end{enumerate}
\end{theorem}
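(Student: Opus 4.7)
The plan is to express the matrix coefficient $\langle T^{\gamma}_r \phi_{\bf c}, \phi_{{\bf c}+{\bf k}}\rangle$ as an explicit finite state sum over admissible colorings of an auxiliary trivalent graph, whose summands are products of quantum $6j$-symbols and theta coefficients, and then recognize each such factor as the specialization at $(\tau, \hbar) = (\frac{\bf c}{r}, \frac{1}{r})$ of a function analytic on a neighborhood of $U \times \lbrace 0 \rbrace$. Uniqueness of $G^{\gamma}_{\bf k}$ will then be forced by the identity principle.

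First I would reduce to the case of a single simple closed curve, since the algebra morphism $K(\Sigma,\zeta_r) \to \mathrm{End}(V_r(\Sigma))$ makes the operator of a multicurve the product of the operators of its components. Isotoping $\gamma$ into standard position on $\Gamma$ so that it crosses each edge $e$ exactly $I(\alpha_e, \gamma)$ times, and repeatedly applying the Masbaum--Vogel fusion identity to the resulting parallel strands along each edge, one resolves the skein $\gamma \cdot \phi_{\bf c}$ as a sum of colored $\Gamma$-graph skeins $\phi_{\bf c'}$, with coefficients given by a product of one $6j$-symbol at each vertex of $\Gamma$ together with theta-coefficient normalizations. Taking the inner product with $\phi_{{\bf c}+{\bf k}}$ collapses the sum to admissible colorings $\sigma$ of the refined trivalent graph $\Gamma' = \Gamma \cup \gamma$ that extend the data $({\bf c}, {\bf c}+{\bf k})$, yielding
\[
\langle T^{\gamma}_r \phi_{\bf c}, \phi_{{\bf c}+{\bf k}}\rangle \;=\; \sum_{\sigma} \prod_{v} \{6j\}_r(\sigma|_v)\, \prod_{e'} \theta_r(\sigma|_{e'})^{-1/2}.
\]

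Each quantum integer $[n]_r = \sin(\pi n/r)/\sin(\pi/r)$ extends analytically, as a function of $(\tau, \hbar)=(n/r, 1/r)$, to the region $\tau \in (0,1)$ and $\hbar$ near $0$; via the Racah alternating-sum formula, each $6j$-symbol and each $\theta_r$-coefficient becomes a rational trigonometric expression in such analytic functions. The delicate point, which I expect to be the main obstacle, is showing that in the unitary (orthonormal) normalization of $\mathcal{B}_P$, the apparent poles of individual Racah summands cancel across the sum over $\sigma$, so that the total assembles into a single function analytic across the boundary locus $\hbar=0$. This is precisely where the sign prefactor $\overline{\bf c}(\gamma)$ enters: it records the $\ZZ/2\ZZ$-discrepancy between the multiplicatively natural skein basis and the orthonormal basis of \cite{BHMV2}, and absorbing it into the definition is what permits the state sum to combine into a single analytic function $G^{\gamma}_{\bf k}$ on a common neighborhood $V_{\gamma}$ of $U \times \lbrace 0 \rbrace$, establishing (1).

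Property (2) is then combinatorial: the refined graph $\Gamma'$ carries $I(\alpha_e, \gamma)$ additional fundamental-color strands crossing the edge $e$, so the triangle inequalities from Definition \ref{def:admissible} at the two vertices of $\Gamma'$ adjacent to $e$ force the color jump ${\bf k}(e)$ to lie in $\lbrace -I(\alpha_e,\gamma),\, -I(\alpha_e,\gamma)+2,\, \ldots,\, I(\alpha_e,\gamma) \rbrace$; for $\bf k$ outside this set the admissibility locus is empty, the state sum is identically zero, and hence so is $G^{\gamma}_{\bf k}$. Uniqueness of $G^{\gamma}_{\bf k}$ is automatic, because the prescribed values at $\lbrace(\frac{\bf c}{r}, \frac{1}{r}) : r \geq 3,\ {\bf c},\, {\bf c}+{\bf k} \in U_r\rbrace$ accumulate at every point of the connected open stratum $U \times \lbrace 0 \rbrace \subset V_{\gamma}$, so two analytic functions agreeing on this set must coincide.
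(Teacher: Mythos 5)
There is a genuine gap, and it sits exactly at the heart of the theorem. Note first that the paper does not reprove this statement: existence of the functions $G_{\bf k}^{\gamma}$ is imported wholesale from \cite[Theorem 1.1]{RD}, and the paper only supplies (in the remark that follows) the uniqueness argument, which \cite{RD} omits. Your proposal instead tries to reprove the cited result via the Masbaum--Vogel fusion calculus, which is indeed the strategy of \cite{RD}; but you explicitly defer the decisive step, writing that the ``main obstacle'' is to show that the apparent poles of the individual Racah/$6j$ summands cancel so that the state sum assembles into a single function analytic across $\hbar=0$, uniformly over $\tau\in U$ and on one common open set $V_{\gamma}$. That assertion \emph{is} the theorem: the whole content of the statement is that the matrix coefficients $\overline{\bf c}(\gamma)\langle T_r^{\gamma}\phi_{\bf c},\phi_{{\bf c}+{\bf k}}\rangle$, which a priori are just a family of numbers indexed by $({\bf c},r)$, are values of one analytic function of $(\frac{\bf c}{r},\frac1r)$ near $U\times\{0\}$. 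Saying that absorbing the sign $\overline{\bf c}(\gamma)$ ``permits'' this is a claim, not an argument (the role of $\overline{\bf c}(\gamma)$ is to kill the sign oscillation in ${\bf c}$ produced by the twist coefficients, which would otherwise obstruct any analytic interpolation); without carrying out this cancellation and the uniform extension, part (1) is not established. Your reduction to a single curve and your parity/support argument for part (2) are fine, but they are the easy parts.

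The uniqueness step is also justified by an invalid principle. You argue that the values prescribed at the points $(\frac{\bf c}{r},\frac1r)$ accumulate at every point of $U\times\{0\}$, ``so two analytic functions agreeing on this set must coincide.'' In more than one variable, agreement on a set with accumulation points (even a set whose closure contains the whole hypersurface $U\times\{0\}$) does not force equality: the functions $0$ and $\hbar$ agree on $U\times\{0\}$. The conclusion is nevertheless true for this particular set, but for the reason given in the paper's remark: the grids $\frac1r U_r$ become dense as $r\to\infty$, so the prescribed values determine $G_{\bf k}^{\gamma}$ on $U\times\{0\}$, and then, examining the expansion of $\langle T_r^{\gamma}\phi_{{\bf c}_r},\phi_{{\bf c}_r+{\bf k}}\rangle$ to every order in $\frac1r$ (equivalently, factoring out successive powers of $\hbar$), they determine all derivatives of $G_{\bf k}^{\gamma}$ in the second variable along $U\times\{0\}$; analyticity and connectedness of the neighborhood then give uniqueness on $V_{\gamma}$. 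You need this order-by-order argument, not the bare accumulation statement.
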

\begin{remark} {\rm Strictly speaking, in \cite{RD} it is only proven that such analytic functions $G_{\bf k}^{\gamma}$ exist and uniqueness is not discussed.
Note, however, that if an analytic function $G_k^{\gamma}$ on $V_{\gamma}$ satisfies part (1) of Theorem \ref{thm:Asymptotic}, then it is determined on all elements $(\frac{\bf c}{r},\frac{1}{r}) \in V_{\gamma}$ where ${\bf c}\in U_r.$ For $x \in U,$ we can choose a sequence of $r$-admissible colorings ${\bf c}_r$ such that $(\frac{{\bf c}_r}{r},\frac{1}{r})\underset{r\rightarrow \infty}{\longrightarrow} (x,0).$ From this we deduce  that $G_{\bf k}^{\gamma}$ is uniquely determined on $U\times \lbrace 0 \rbrace,$ and also, looking at the expansion of $\langle T_r^{\gamma}(\phi_{{\bf c}_r}) ,\phi_{{\bf c}_r+\bf{k}} \rangle$ at any order in $\frac{1}{r},$  that all derivatives of $G_{\bf k}^{\gamma}$ with respect to the second variable are determined on $U \times \lbrace 0 \rbrace,$ which implies that $G_{\bf k}^{\gamma}$ is uniquely determined on $V_{\gamma}$. }
\end{remark}

\begin{definition} \label{def:TQFTfunctions}Given $P$ a pants decompositions of $\Sigma$ and $\gamma$ a multicurve, consider the operators $T^{\gamma}_r$ and their expansion in the sense of Theorem \ref{thm:Asymptotic} with respect to the bases corresponding to $P$.
Define $n(\gamma, P)$ to be the number of ${\bf k}\neq {\bf 0}$, such that $G_{\bf k}^{\gamma}\neq {\bf 0}.$  We will call $n(\gamma, P)$ the \emph{quantum intersection number} of 
$\gamma$ and $P$.

Also if $Q$ is another pants decomposition, which we can view as a multicurve, define $n(Q,P)$ also as the number of ${\bf k}\neq {\bf 0}$ such that $G_{\bf k}^Q\neq {\bf 0}.$
\end{definition}

Let us stress that the definition of $n(Q,P)$ is not symmetric in $P,Q$. The pants decomposition $Q$ plays the role of a multicurve while the pants decomposition $P$ provides a basis of $V_r(\Sigma).$ 
We also stress that we are  restricting ourselves ${\bf k}\neq {\bf 0}$ so we don't count the diagonal terms $G_{\bf 0}^{\gamma}$ in Definition \ref{def:TQFTfunctions}.

\subsection{Two basic properties}
Here we derive two basic properties of the quantum intersection number.
We begin with the following lemma.

\begin{lemma} \label{lemma:split} Fix a pants decomposition $P$ of $\s$.  Any multicurve $\gamma$,  splits into multicurves $\gamma'$ and $\gamma''$  (some of which may be empty), so that each  curve in $\gamma''$ is parallel to some curve in $P$.
We have $n(\gamma, P)=n(\gamma', P)$.

\end{lemma}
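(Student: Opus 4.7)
The plan is to exploit the fact that curves in $\gamma''$ act diagonally on the basis ${\mathcal B}_P$, so that the curve operator of $\gamma$ differs from that of $\gamma'$ by a scalar that depends analytically on $\frac{\bf c}{r}$. Since $\gamma'$ and $\gamma''$ are disjoint as multicurves on $\Sigma$, their skein product is just their disjoint union, and the algebra morphism $K(\Sigma,\zeta_r)\to \mathrm{End}(V_r(\Sigma))$ gives $T^{\gamma}_r = T^{\gamma'}_r T^{\gamma''}_r$. By equation \eqref{eq:eigenvectors}, the operator $T^{\gamma''}_r$ is diagonal in ${\mathcal B}_P$, with eigenvalue on $\phi_{\bf c}$ equal to $F\bigl(\tfrac{\bf c}{r}\bigr)$, where $F(\tau)=\prod_{e\in E}\bigl(-2\cos(\pi\tau_e)\bigr)^{n_e}$ and $n_e$ is the number of components of $\gamma''$ parallel to $\alpha_e$. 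In particular, $F\colon U\to \CC$ is analytic and not identically zero.

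Next, I would compare the inner products used to define the coefficient functions. The calculation above immediately yields
\[
\langle T^{\gamma}_r \phi_{\bf c},\,\phi_{{\bf c}+{\bf k}}\rangle \;=\; F\bigl(\tfrac{\bf c}{r}\bigr)\,\langle T^{\gamma'}_r \phi_{\bf c},\,\phi_{{\bf c}+{\bf k}}\rangle.
\]
To pass this to the functions $G^{\gamma}_{\bf k}$ and $G^{\gamma'}_{\bf k}$, I need to check that $\overline{\bf c}(\gamma)=\overline{\bf c}(\gamma')$. Since $\overline{\bf c}$ is a homomorphism on $H_1(\Sigma,\ZZ/2\ZZ)$ and $\gamma=\gamma'\sqcup\gamma''$, it suffices to show $\overline{\bf c}(\gamma'')=1$. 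Because each component of $\gamma''$ is parallel to some $\alpha_e$, and $\alpha_e$ is dual to $e$ and disjoint from all other edges of $\Gamma$, the curve $\alpha_e$ meets $\tilde{L}_{\bf c}$ in exactly $2(c_e-1)$ points, giving $\overline{\bf c}(\alpha_e)=1$. Combining these observations, for every admissible ${\bf c}$ with $\bigl(\tfrac{\bf c}{r},\tfrac{1}{r}\bigr)\in V_\gamma\cap V_{\gamma'}$,
\[
G^{\gamma}_{\bf k}\Bigl(\tfrac{\bf c}{r},\tfrac{1}{r}\Bigr)\;=\;F\Bigl(\tfrac{\bf c}{r}\Bigr)\,G^{\gamma'}_{\bf k}\Bigl(\tfrac{\bf c}{r},\tfrac{1}{r}\Bigr).
\]

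Finally, I would invoke the uniqueness part of Theorem \ref{thm:Asymptotic} (as discussed in the remark just after it): an analytic function on $V_\gamma$ that satisfies the identity of part (1) on colorings is uniquely determined on the open set $V_\gamma\cap V_{\gamma'}$. Since $\tau\mapsto F(\tau)$ is analytic in $\tau$ alone and the above identity holds on a dense subset, analyticity forces $G^{\gamma}_{\bf k}(\tau,s)=F(\tau)\,G^{\gamma'}_{\bf k}(\tau,s)$ on the common domain. Because $F$ is not identically zero on $U$, multiplication by $F$ does not annihilate any nonzero analytic function on the connected neighborhood of $U\times\{0\}$, so $G^{\gamma}_{\bf k}\equiv 0$ if and only if $G^{\gamma'}_{\bf k}\equiv 0$. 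Applied to all ${\bf k}\ne {\bf 0}$, this yields $n(\gamma,P)=n(\gamma',P)$.

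The only mildly delicate point is the bookkeeping of the sign $\overline{\bf c}$ and the verification that $\overline{\bf c}(\gamma'')=1$; everything else is routine from Theorem \ref{thm:Asymptotic} and equation \eqref{eq:eigenvectors}.
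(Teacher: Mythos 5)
Your proof is correct and takes essentially the same route as the paper: factor $T^{\gamma}_r = T^{\gamma'}_r T^{\gamma''}_r$ with the second factor acting diagonally on ${\mathcal B}_P$ via Equation \eqref{eq:eigenvectors}, and conclude that the nonvanishing coefficients for $\gamma$ and $\gamma'$ coincide. The only difference is that the paper argues at the level of matrix entries for each $r$, while you carry out the comparison at the level of the functions $G_{\bf k}$, making explicit the sign check $\overline{\bf c}(\gamma'')=1$ and the analyticity/uniqueness step — a somewhat more careful rendering of the same idea.
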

\begin{proof} By the  properties of curve operators we recalled earlier, we can write $T^{\gamma}_r$ as  a product $T^{\gamma}_r=T^{\gamma'}_r T^{\gamma''}_r$, where
the matrix representing $T^{\gamma''}_r$ with respect to the basis corresponding to $P$ is diagonal with non-zero eigenvalues (see Equation \eqref{eq:eigenvectors}).
Thus, for any $r\geq 3$, the number of non-zero entries in the matrices representing  $T^{\gamma}_r$ and $T^{\gamma'}_r $ is the same.
\end{proof}

The mapping class group
$\M$  acts on the set of multicurves of $\s$ and on $\CV$. The next result shows that the the quantum intersection number 
is invariant under this action.

\begin{proposition}\label{prop:MCGaction} For any $\phi \in \mathrm{Mod}(\Sigma)$,  we have $n(\phi(\gamma), \phi(P))=n(\gamma,P),$ for all multicurves $\gamma$ and $P\in \CV$.
\end{proposition}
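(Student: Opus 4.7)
The plan is to exploit the naturality of the BHMV construction with respect to the mapping class group action. First, the $SU(2)$-TQFT equips $V_r(\Sigma)$ with a (possibly projective) unitary action $\rho_r\co \M \longrightarrow \mathrm{U}(V_r(\Sigma))$ under which curve operators are equivariant:
$$\rho_r(\phi)\,T^{\gamma}_r\,\rho_r(\phi)^{-1}\;=\;T^{\phi(\gamma)}_r.$$
This is immediate from the description of $T^{\gamma}_r$ as multiplication by $\gamma$ in the Kauffman skein module of a handlebody $H$ with $\partial H = \Sigma$, together with the fact that $\phi$ is induced by a diffeomorphism of $H$ carrying $\gamma$ to $\phi(\gamma)$; any central scalar ambiguity in $\rho_r$ will cancel in what follows.

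Next I would verify that $\rho_r(\phi)$ intertwines the two BHMV bases in the expected way. If $\Gamma$ is a banded trivalent graph dual to $P$, then $\phi(\Gamma)$ is a banded trivalent graph dual to $\phi(P)$, and $\phi$ induces a bijection of edge sets, hence a bijection $U_r(\Gamma) \longrightarrow U_r(\phi(\Gamma))$ on admissible colorings. Because the basis vectors $\phi^{P}_{\bf c}$ of \cite{BHMV2} are obtained by decorating the edges of $\Gamma$ with Jones--Wenzl idempotents of the prescribed colors, naturality of this construction yields
$$\rho_r(\phi)\,\phi^{P}_{\bf c}\;=\;\lambda_{\bf c}\,\phi^{\phi(P)}_{\bf c},$$
for unit scalars $\lambda_{\bf c}$.

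Combining the two observations, for every shift ${\bf k}$ and every $r$-admissible ${\bf c}$,
$$\bigl\langle T^{\phi(\gamma)}_r \phi^{\phi(P)}_{\bf c},\,\phi^{\phi(P)}_{\bf c+k}\bigr\rangle \;=\;\overline{\lambda}_{\bf c+k}\,\lambda_{\bf c}\,\bigl\langle T^{\gamma}_r \phi^{P}_{\bf c},\,\phi^{P}_{\bf c+k}\bigr\rangle.$$
Since the phases $\lambda_{\bf c}$ are nonzero, the two matrices have identical support. By Theorem \ref{thm:Asymptotic}(1) it follows that $G^{\phi(\gamma)}_{\bf k}$, computed with respect to the basis attached to $\phi(P)$ via the dual graph $\phi(\Gamma)$, vanishes identically on its open set of definition if and only if $G^{\gamma}_{\bf k}$, computed with respect to the basis attached to $P$ via $\Gamma$, does. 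Counting the nonvanishing shifts gives $n(\phi(\gamma),\phi(P)) = n(\gamma,P)$.

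The one step that warrants care is verifying the intertwining of bases: one must confirm that the cyclic/banded structure at each vertex of $\phi(\Gamma)$ agrees with the pushforward of that on $\Gamma$, so the Jones--Wenzl insertions match up correctly under $\rho_r(\phi)$. This is, however, a standard feature of the skein-theoretic construction of \cite{BHMV2}, and with it in hand the remaining steps are formal.
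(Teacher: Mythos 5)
Your proposal is correct and follows essentially the same route as the paper: conjugation-equivariance $\rho_r(\phi)T_r^{\gamma}\rho_r(\phi)^{-1}=T_r^{\phi(\gamma)}$, the fact that $\rho_r(\phi)$ carries $\mathcal{B}_P$ to $\mathcal{B}_{\phi(P)}$ up to phases, and then comparison of matrix supports via Theorem \ref{thm:Asymptotic}(1). The one real difference is how you justify the basis statement: you invoke naturality of the skein-theoretic construction of the vectors $\phi_{\bf c}$ (pushing the colored banded graph $\Gamma$ forward to $\phi(\Gamma)$), whereas the paper argues more abstractly that $\mathcal{B}_P$ is, up to phases, the unique orthonormal joint eigenbasis of the commuting operators $T_r^{\alpha_e}$, $\alpha_e\in P$, so $\rho_r(\phi)\mathcal{B}_P$ must agree with $\mathcal{B}_{\phi(P)}$ up to phases; your version requires checking the banded/colored-graph bookkeeping you flag at the end, while the paper's sidesteps it at the cost of using simplicity of the joint spectrum. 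One small caution: your justification of the equivariance via ``$\phi$ is induced by a diffeomorphism of $H$'' is not right in general, since an arbitrary mapping class of $\Sigma$ need not extend over the handlebody; the identity instead follows from the definition of $T_r^{\gamma}$ through the action of the skein algebra of $\Sigma\times[0,1]$ together with the mapping-cylinder description of $\rho_r(\phi)$ (the paper simply asserts it follows from the definitions), so this is a slip in wording rather than a gap in the argument.
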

\begin{proof} We recall from \cite{BHMV2} that for any $r>2$  we have a projective representation 

$$\rho_{r} : \mathrm{Mod}(\Sigma) \rightarrow \mathbb{P}\mathrm{End}( V_r(\Sigma)).$$
It follows from the definitions that for any $\phi \in \M$, and any multicurve $\gamma$, we have
$$\rho_r(\phi)T_r^{\gamma}\rho_r(\phi)^{-1}=T_r^{\phi(\gamma)}.$$

Hence, in particular,  if $P$ is a pants decomposition, then $\rho_r(\phi)T_r^{P}\rho_r(\phi)^{-1}=T_r^{\phi(P)}.$ Moreover, $\rho_r(\phi)$ sends the  basis  ${\mathcal B}_P$ of 
$V_r(\Sigma)$
associated to $P$,  to the basis ${\mathcal B}_{\phi(P)}$  associated to $\phi(P)$, up to scalars that are roots of unity (up to \emph{phases}).
By Equation \eqref{eq:eigenvectors}, the basis ${\mathcal B}_{P}$ is an orthonormal basis of eigenvectors of the curve operators $T_r^{\alpha_e}$,
for  $\alpha_e\in P$,  and the corresponding eigenvectors are non-zero.
Moreover, such a basis is unique up to phases. Therefore, for all $r\geq 3,$ the coefficients of $T_r^{\gamma}$ in ${\mathcal B}_{P}$ are the same (up to phases) as the coefficients of $T_r^{\phi(\gamma)}$ in the basis  ${\mathcal B}_{\phi(P)}$. This implies that $n(\gamma,P)=n(\phi(\gamma), \phi(P)).$
\end{proof}

\subsection{Quantum and geometric intersection number}
The main technical result of the paper is the following theorem.
\begin{theorem}\label{thm:inequalitiesgeneral} Given $P$ and  $\gamma$  a pants decomposition  and a multicurve on $\s$, let  $I(\gamma,P)$ denote the total intersection number of $\gamma$ and $P$. We have 
$$
\frac{I(\gamma,P)}{3g-3}\leq  n(\gamma,P)  \leq (I(\gamma,P)+1)^{3g-3}-1,
$$
where $n(\gamma,P)$ denotes the quantum intersection number of $\gamma$ with respect to $P$.
\end{theorem}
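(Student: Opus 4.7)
The upper bound is essentially immediate from part (2) of Theorem \ref{thm:Asymptotic}. For each edge $e \in E$, a nonzero $G^{\gamma}_{\mathbf k}$ requires $|{\mathbf k}(e)| \le I(\alpha_e,\gamma)$ and ${\mathbf k}(e)\equiv I(\alpha_e,\gamma)\pmod 2$, so the number of allowed values of ${\mathbf k}(e)$ is exactly $I(\alpha_e,\gamma)+1\le I(\gamma,P)+1$. Taking the product over $|E|=3g-3$ edges and subtracting one for the ${\mathbf k}={\mathbf 0}$ term gives the stated upper bound.

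For the lower bound my plan is to reduce to the inequality $M(\gamma,P)\le n(\gamma,P)$, where $M(\gamma,P):=\max_{\alpha\in P} I(\gamma,\alpha)$. Since $I(\gamma,P)=\sum_{\alpha\in P} I(\gamma,\alpha)\le (3g-3)M(\gamma,P)$, this yields $I(\gamma,P)/(3g-3)\le n(\gamma,P)$. To prove $M(\gamma,P)\le n(\gamma,P)$, I would first isotope $\gamma$ into Thurston--Dehn position with respect to $P$, so that $\gamma$ is cut by $P$ into a finite collection of arcs properly embedded in the pants of $P$, with endpoints on the curves $\alpha_e$. This arc decomposition is the natural combinatorial substrate on which the Masbaum--Vogel fusion/6j-symbol rules produce an explicit state-sum expansion of $T^{\gamma}_r(\phi_{\mathbf c})$ in the basis ${\mathcal B}_P$. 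The plan is to package this expansion into a state-sum formula (Proposition \ref{prop:statesum}) for the specialisations $G^{\gamma}_{\mathbf k}(\theta,0)$, in which states live on the arc endpoints, weights come from 6j-symbols and quantum integer evaluations, and summation is over states compatible with a given transition ${\mathbf c}\mapsto {\mathbf c}+{\mathbf k}$.

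The technical heart is to then show, in the spirit of Proposition \ref{prop:polynomial}, that after the substitution $\tau_e\equiv\theta$ the coefficient $G^{\gamma}_{\mathbf k}(\theta,0)$ is governed by a Laurent polynomial in $\cos(\pi\theta)$ (or in the corresponding $A$-variable) with rational coefficients, whose leading and lowest degree terms can be read off from the arc combinatorics. The main obstacle will be extracting from this polynomial structure at least $M(\gamma,P)$ distinct values of ${\mathbf k}$ for which this Laurent polynomial is not identically zero: the output of any individual arc is easy to control, but the state sum may in principle exhibit cancellations across states contributing to the same ${\mathbf k}$. My strategy is to concentrate on an edge $e^{\ast}\in E$ realising $I(\alpha_{e^{\ast}},\gamma)=M(\gamma,P)$ and, by using the extremal degrees of the arc contributions crossing $\alpha_{e^{\ast}}$, show that no such cancellation can occur for the $M(\gamma,P)$ values of ${\mathbf k}(e^{\ast})$ that achieve the extremal monomial degree. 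This is the content I would formulate as Proposition \ref{prop:nozeropoly}.

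Finally, having produced $M(\gamma,P)$ nonzero specialisations $G^{\gamma}_{\mathbf k}(\theta,0)$, the analyticity of the functions $G^{\gamma}_{\mathbf k}$ on the open set $V_\gamma\supset U\times\{0\}$ guaranteed by Theorem \ref{thm:Asymptotic} forces the corresponding $G^{\gamma}_{\mathbf k}$ to be nonzero as functions, hence they each contribute to $n(\gamma,P)$. Combined with the reduction above, this yields $I(\gamma,P)/(3g-3)\le M(\gamma,P)\le n(\gamma,P)$, completing the lower bound. I expect the main difficulty to be the degree/nonvanishing analysis of the Laurent polynomials, since this is where one must actually exploit the specific form of the fusion rules rather than just their formal structure.
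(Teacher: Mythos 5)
Your upper bound is exactly the paper's (Lemma \ref{lemma:upper}), and your overall route for the lower bound --- reducing to $M(\gamma,P)\le n(\gamma,P)$, putting $\gamma$ in Dehn--Thurston position, expressing the specialized coefficients as a state sum over arc endpoints, packaging them into a Laurent polynomial, and arguing non-cancellation at the extremal degree --- is the route the paper takes in Sections \ref{sec:fusions}--\ref{sec:five}. However, there is a genuine gap at the decisive step, precisely where you flag the difficulty. You never specify the full shift vectors $\mathbf{k}$ away from the extremal edge $e^{\ast}=e_0$. The paper's argument hinges on taking $\mathbf{k}_\sigma(e)=I(\gamma,\alpha_e)$, the \emph{maximal} admissible shift, for every $e\ne e_0$, and varying only $\mathbf{k}_\sigma(e_0)=\sigma$ with $\abs{\sigma}\le M(\gamma,P)$ and $\sigma\equiv M(\gamma,P)\ (\mathrm{mod}\ 2)$ (Equation \eqref{Qstates}, Theorem \ref{thm:lower}). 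This choice forces every contributing state to take the value $+1$ at all intersection points of $\gamma$ with curves other than $\alpha_{e_0}$ (Lemma \ref{properties}), which localizes the whole sign and degree analysis to the arcs meeting $\alpha_{e_0}\cup\alpha'_{e_0}$. Without that localization, ``no cancellation at the extremal monomial degree'' cannot be extracted from the arcs crossing $\alpha_{e^{\ast}}$ alone: states contributing to the same $\mathbf{k}$ may differ on other edges and on come-back patterns, and their weights carry both signs. Note also that the parity constraint leaves no slack: only shifts $\sigma\equiv I(\gamma,\alpha_{e_0})\ (\mathrm{mod}\ 2)$ can be nonzero, so essentially all of the candidate nonzero shifts at $e_0$ must survive the cancellation analysis to reach the count $M(\gamma,P)$.

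Even granting the localization, the non-cancellation step is not a routine degree count, and it is exactly the content of Proposition \ref{prop:nozeropoly} that your proposal defers. The paper needs: (i) Lemma \ref{lemma:plusatover}, showing that assigning $+1$ to overcrossings of come-back patterns strictly raises the $z$-degree, so one may restrict to states in $\mathcal{S}^{+}_{\delta}$, after Lemma \ref{lem:come-backArcs} has controlled which come-back patterns can occur near $\alpha_{e_0}$; (ii) a complexity function $\tau$ on the points of $\gamma\cap\alpha_{e_0}$ built from swift numbers and undercrossing counts, an explicit description of the maximal-degree states, and a sign computation showing they all contribute with the same sign (Lemma \ref{lem:maxdegree}); and (iii) a separate argument when $e_0$ is a one-edge loop (the one-holed torus case), where maximal states cannot be described explicitly and one instead proves $\sign(W(s))=(-1)^{\deg_z(W(s))+B'}$ for all states (Lemma \ref{lem:maxdegreeCase2}). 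None of this machinery is supplied or sketched in your proposal. Your remaining steps are fine: the reduction $I(\gamma,P)\le(3g-3)M(\gamma,P)$, the count of at most $I(\alpha_e,\gamma)+1$ admissible values of $\mathbf{k}(e)$ per edge for the upper bound, and the final passage from a nonzero specialization $G^{\gamma}_{\mathbf{k}}(\theta,0)$ (obtained, as in the paper, by evaluating the nonzero Laurent polynomial at $\theta$ with $e^{i\pi\theta/2}$ transcendental) to the nonvanishing of the function $G^{\gamma}_{\mathbf{k}}$.
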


The upper inequality in Theorem \ref{thm:inequalitiesgeneral}, which a direct consequence of Theorem \ref{thm:Asymptotic}, is derived in the next lemma.

\begin{lemma}\label{lemma:upper} We have $n(\gamma,P)  \leq (I(\gamma,P)+1)^{3g-3}-1$,
for all $P\in \CV$ and  multicurves $\gamma$ on $\s$.
\end{lemma}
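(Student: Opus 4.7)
The plan is to apply part (2) of Theorem \ref{thm:Asymptotic} directly, extracting from it a combinatorial restriction on the multi-indices ${\bf k}$ for which the coefficient function $G_{\bf k}^{\gamma}$ can fail to vanish. Fix a trivalent graph $\Gamma$ dual to $P$, with edge set $E$ of cardinality $3g-3$. For each edge $e$, Theorem \ref{thm:Asymptotic}(2) forces ${\bf k}(e)$ to lie in $\{-a_e, -a_e+2, \dots, a_e-2, a_e\}$, where $a_e := I(\alpha_e, \gamma)$; this is a set of exactly $a_e + 1$ elements. Hence the number of candidate vectors ${\bf k}$ is at most $\prod_{e \in E}(a_e + 1)$. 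Setting $A := I(\gamma, P) = \sum_{e\in E} a_e$, each factor satisfies $a_e + 1 \leq A + 1$, so the product is at most $(A+1)^{3g-3}$. This already yields $n(\gamma, P) \leq (A+1)^{3g-3}$.

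To sharpen this to the stated bound with the $-1$, I would account for the fact that $n(\gamma, P)$ counts only ${\bf k} \neq {\bf 0}$, and split into two cases. If every $a_e$ is even, then ${\bf 0}$ lies in the admissible set of candidates and must be removed from the count, giving directly $n(\gamma, P) \leq \prod_e(a_e+1) - 1 \leq (A+1)^{3g-3} - 1$ by monotonicity. If instead some $a_e$ is odd, then ${\bf 0}$ is already excluded a priori by the parity constraint; in that case $A \geq 1$, and the chain $a_e + 1 \leq A + 1$ cannot be an equality for every $e$ simultaneously (equality would force $a_e = A$ for all $e$, whence $(3g-3)A = A$, impossible when $3g-3 \geq 2$ and $A \geq 1$). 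Among integers, strictness improves the product bound by at least $1$, again yielding $\prod_e(a_e+1) \leq (A+1)^{3g-3} - 1$.

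No step here is a serious obstacle; once Theorem \ref{thm:Asymptotic}(2) is invoked, everything reduces to an elementary combinatorial count. The only modest subtlety is the bookkeeping required to recover the final $-1$, and this is exactly what the two-case split above takes care of. No skein-theoretic input beyond the asymptotic expansion, and no convexity argument beyond monotonicity of each factor $a_e + 1$ in $a_e$, is needed.
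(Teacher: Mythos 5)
Your argument is correct and follows essentially the same route as the paper's proof of Lemma \ref{lemma:upper}: invoke part (2) of Theorem \ref{thm:Asymptotic} to see that each ${\bf k}(e)$ has at most $I(\alpha_e,\gamma)+1$ admissible values, take the product over the $3g-3$ edges, discard the diagonal term ${\bf k}={\bf 0}$, and bound each factor by $I(\gamma,P)+1$. Your parity case analysis is just a more careful accounting of the final $-1$ (the paper simply drops the diagonal term), not a different method.
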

\begin{proof} If $I(\gamma,P)=0$, then
both sides of the inequality are zero and there is nothing to prove. 

Suppose that $I(\alpha_e, \gamma)\neq 0$ for some $e\in E$, and 
that ${\bf  k}: E\longrightarrow \ZZ$ is a function such that  $G_{\bf k}^{\gamma}\neq {\bf 0}$ and ${\bf  k}(e)\neq 0$.

Then,
by  part (2) of Theorem \ref{thm:Asymptotic}, 
 there are at most $I(\alpha_e, \gamma)+1$ choices 
for ${\bf  k}(e)$. Thus, and since in Definition \ref{def:TQFTfunctions}
we only count there non-diagonal terms $G_{\bf k}^{\gamma}\neq {\bf 0},$ we have
$$n(\gamma,P)  \leq \underset{e\in E}\prod (I(\alpha_e,P)+1)-1 \leq(I(\gamma,P)+1)^{3g-3}-1.$$
\end{proof}

Note that Theorem \ref{thm:inequalities} is a special case for Theorem  \ref{thm:inequalitiesgeneral}  where the multicurve $\gamma$ is also a pants decomposition $Q$.

\begin{remark}\label{rem:CM} {\rm Given  a pants decomposition $P=\{\alpha_e\  |\  e\in E\}$  and a multicurve
$\gamma$, consider the set of maps
${\bf  m}: E\longrightarrow \ZZ,$
such that for any $e\in E$, we have ${\bf  m}(e)\in \{ \pm I(\gamma, \alpha_e)\}$.
If $I(\gamma,P)\neq 0$, let $m_{\gamma}\leq 3g-3$ denote the number of edges $e\in E$ such that  $I(\gamma, \alpha_e)\neq 0$.
By \cite[Theorem 1.3] {RD} and a result of  Charles and March\'e  \cite[Theorem 5.1]{CM} it follows that for each ${\bf m}\neq {\bf 0}$
we have $G_{\bf m}^{\gamma}\neq 0$. This gives that
$ n(\gamma,P)\geq 2^{m_{\gamma}}$, which was the only lower bound of the quantum intersection number known before Theorem \ref{thm:inequalitiesgeneral}.}
\end{remark}

We now briefly describe our approach to proving the lower inequality of Theorem \ref{thm:inequalitiesgeneral}: Let $P$ be a pants decomposition of $\Sigma$ with dual
graph $\Gamma$. Given a  multicurve $\gamma \neq P$, define
\begin{equation}
M(\gamma, P):=\max_{\alpha_e\in P}\{I(\gamma, \alpha_e)\}.
\label{eq:maxint}
\end{equation}
We can pick a curve  $\alpha_{e_0}$, dual to an edge $e_0\in \Gamma$ such that $M( \gamma, P)=I(\gamma,  \alpha_{e_0})$.
For any integer  $\abs{\sigma} \leq I(\gamma,\alpha_{e_0}) $ consider the function ${\bf  k}_{\sigma}: E\longrightarrow \ZZ,$
defined by $${\bf  k}_{\sigma}(e)=\begin{cases} \sigma \ \ \ \ \  \ \ \  \ \  {\rm if}\ \  e=e_0,&\\
I(\gamma, \alpha_{e})\ \ {\rm if} \ \  e\neq e_0.\end{cases},$$
where $\alpha_e \in P$ is the curve dual to the edge $e\in \Gamma$.
We will show the following:
\begin{theorem}\label{thm:lower} 
For each  integer $\sigma \neq 0$, with $ \abs{\sigma}\leq M(\gamma, P)$ and $\sigma\equiv M(\gamma, P)(\rm{mod}\ 2)$, we have $G_{{\bf k}_{\sigma}}^{\gamma}\neq {\bf 0}.$ 
\end{theorem}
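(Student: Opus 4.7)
The plan is to prove Theorem \ref{thm:lower} by exhibiting, for each admissible $\sigma$, a single specialization at which the analytic function $G_{{\bf k}_\sigma}^{\gamma}$ does not vanish. Since $G_{{\bf k}_\sigma}^{\gamma}$ is analytic on the open set $V_\gamma$, it suffices to find one point $(\theta,0)\in U\times\{0\}$ with $G_{{\bf k}_\sigma}^{\gamma}(\theta,0)\neq 0$. I would restrict to $\theta$ of the form $(t,t,\ldots,t)$ with $t$ in a generic subinterval of $(0,1)$, so that the problem reduces to one-variable analysis.

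First I would put $\gamma$ in Thurston-Dehn position with respect to $P$ and invoke the state-sum formula of Proposition \ref{prop:statesum} to express $G_{\bf k}^{\gamma}(\theta,0)$ as a weighted sum over admissible \emph{states} on the endpoints of the arcs of $\gamma$, where the weights are products of local Masbaum-Vogel fusion factors in each pair of pants. The decisive observation is that ${\bf k}_\sigma(e)=I(\gamma,\alpha_e)$ on every edge $e\neq e_0$, which by part (2) of Theorem \ref{thm:Asymptotic} is the largest shift permitted at $\alpha_e$. Forcing the maximum shift at $\alpha_e$ obliges every arc crossing $\alpha_e$ to carry its maximal local contribution; this rigidly pins down the state on all arcs meeting $\alpha_e$ for $e\neq e_0$. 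The state-sum therefore collapses onto a one-parameter family of states parametrized by the colorings along $\alpha_{e_0}$ compatible with the shift $\sigma$, and by Theorem \ref{thm:Asymptotic}(2) this family is non-empty exactly under the parity hypothesis $\sigma\equiv M(\gamma,P)\pmod 2$.

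After this reduction, I expect Proposition \ref{prop:polynomial} to identify $G_{{\bf k}_\sigma}^{\gamma}((t,\ldots,t),0)$ with the evaluation at $q=e^{i\pi t}$ of a Laurent polynomial $P_\sigma(q)\in \QQ[q^{\pm 1}]$. To conclude, I would isolate the two extreme states along $\alpha_{e_0}$ --- the maximal and the minimal admissible colorings compatible with the shift $\sigma$ --- and show that they contribute monomials of strictly maximal and strictly minimal $q$-degree among all surviving states. Since every intermediate state then contributes with strictly intermediate $q$-degree, $P_\sigma$ has a nonzero leading coefficient (and nonzero trailing coefficient), so $P_\sigma\not\equiv 0$. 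Choosing $t$ outside the finitely many real zeros of $P_\sigma$ yields $G_{{\bf k}_\sigma}^{\gamma}(\theta,0)\neq 0$, as required.

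The hardest step is the last one: controlling possible cancellations among distinct states that contribute to the same power of $q$ in the restricted sum along $\alpha_{e_0}$. This will require explicit tracking of the $q$-degrees of the local fusion factors --- for which Proposition \ref{prop:polynomial} is tailored --- together with an injectivity argument at the extremal ends of the $q$-degree range, ensuring that the top and bottom monomials of $P_\sigma$ are genuinely isolated and cannot be cancelled by any other admissible state. Granted this, the conclusion follows as sketched, and the resulting $M(\gamma,P)$ non-zero coefficients give the lower bound in Theorem \ref{thm:inequalitiesgeneral}.
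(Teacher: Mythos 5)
Your overall route is the paper's: specialize to equal colorings $\frac{{\bf c}_r}{r}\to(\theta,\ldots,\theta)$, express the limit coefficient as the state sum of Proposition \ref{prop:statesum}, factor it as a Laurent polynomial via Proposition \ref{prop:polynomial}, prove the polynomial is non-zero by a leading-term analysis, and then pick $\theta$ so that the evaluation point is not a zero (the paper takes $e^{i\pi\theta/2}$ transcendental; avoiding finitely many zeros on the circle is equivalent). Your observation that the maximal shift ${\bf k}_{\sigma}(e)=I(\gamma,\alpha_e)$ for $e\neq e_0$ forces the state to be $+1$ at every point of $\gamma\cap\alpha_e$ is exactly Lemma \ref{properties}. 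Two caveats, though, before the main issue: the states are \emph{not} entirely pinned down away from $e_0$, because each come-back pattern carries a free over/undercrossing assignment (handled in the paper by Lemma \ref{lemma:plusatover}, which shows the degree strictly drops unless the overcrossing gets $+1$), and the case where $e_0$ is a one-edge loop needs separate treatment.

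The genuine gap is in your key non-cancellation step: you propose an ``injectivity argument at the extremal ends,'' i.e.\ that the top (and bottom) $q$-degree is contributed by a \emph{single} state, with all other states in strictly intermediate degree. This is false in general. Up to a state-independent constant, $\deg_z(W(s))=\sum_{p\in\gamma\cap\alpha_{e_0}}s(p)\tau(p)$, where $\tau$ is the complexity of Definition \ref{def:complexity}; whenever several intersection points with $\alpha_{e_0}$ have equal complexity (e.g.\ parallel strands crossing the annulus with the same swift number), the maximal degree is attained by all $\binom{\abs{T_1}}{l}$ (etc.) states in ${\mathcal S}_{\rm max}$, so no extremal state is isolated. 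What actually closes the argument is not uniqueness but \emph{sign coherence}: the paper computes $\sign(W_{e_0}(s))=\prod_{y:\,\epsilon_y=-1}(-1)^{\tau_y+1}$, which is the same for every maximal-degree state, so their leading contributions add rather than cancel (Lemma \ref{lem:maxdegree}); and when $e_0$ is a loop edge, where even the complexity description of maximal states fails, one instead proves $\sign(W(s))=(-1)^{\deg(W(s))+B'}$ for \emph{all} states, which again forces agreement of signs in top degree (Lemma \ref{lem:maxdegreeCase2}). Without a sign argument of this kind, the step you yourself flag as hardest does not go through, and it is precisely the content of Proposition \ref{prop:nozeropoly}.
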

\vskip 0.04in

By Equation \eqref{eq:maxint},  $I(\gamma, P)\leq ({3g-3}) M(\gamma, P)$ and  by
Theorem \ref{thm:lower} and Definition \ref{def:TQFTfunctions}, $M(\gamma, P)\leq n(\gamma, P)$. Hence, the lower inequality  of Theorem \ref{thm:inequalitiesgeneral} 
follows once we have Theorem  \ref{thm:lower}.  The proof of Theorem  \ref{thm:lower} is given  in Section \ref{sec:five}.

\subsection{Elementary moves and quantum intersection number}
Let $P$ be a pants decomposition of a surface $\s$ of genus $g>1$. Consider a curve $\alpha_e\in P$ bordering  two pairs of pants giving a four-holed 2-sphere subsurface
of $\Sigma_{0,4}$ of $\s$.
\begin{definition}\label{def:emoves}
An $A$-move  replaces  $\alpha_e\in P$ with an essential curve on $\Sigma_{0,4}$ that intersects $\alpha_e$ at two points. The effect of an $A$-move  on  the dual graph
$\Gamma$ is \emph{elementary shift}.
See top row of Figure \ref{fig:ASmoves}.

An $S$-move  replaces a curve $\alpha_e\in P$ that lies in the interior of an  one-holed torus subsurface $\Sigma_{1,1} \subset \s$, by a curve that lies on $\Sigma_{1,1}$
and intersects $\alpha_e$   once. The $S$-move leaves the graph $\Gamma$ unchanged.
See  bottom row of Figure \ref{fig:ASmoves}. 

Shortly, we will refer to an $A$ or $S$-move as an \emph{elementary move}.
\end{definition}
Note that an elementary move on a pants decomposition $P$ produces another pants  decomposition of $\s$.

\begin{figure}[h]
\centering
\def \svgwidth{.65\columnwidth}
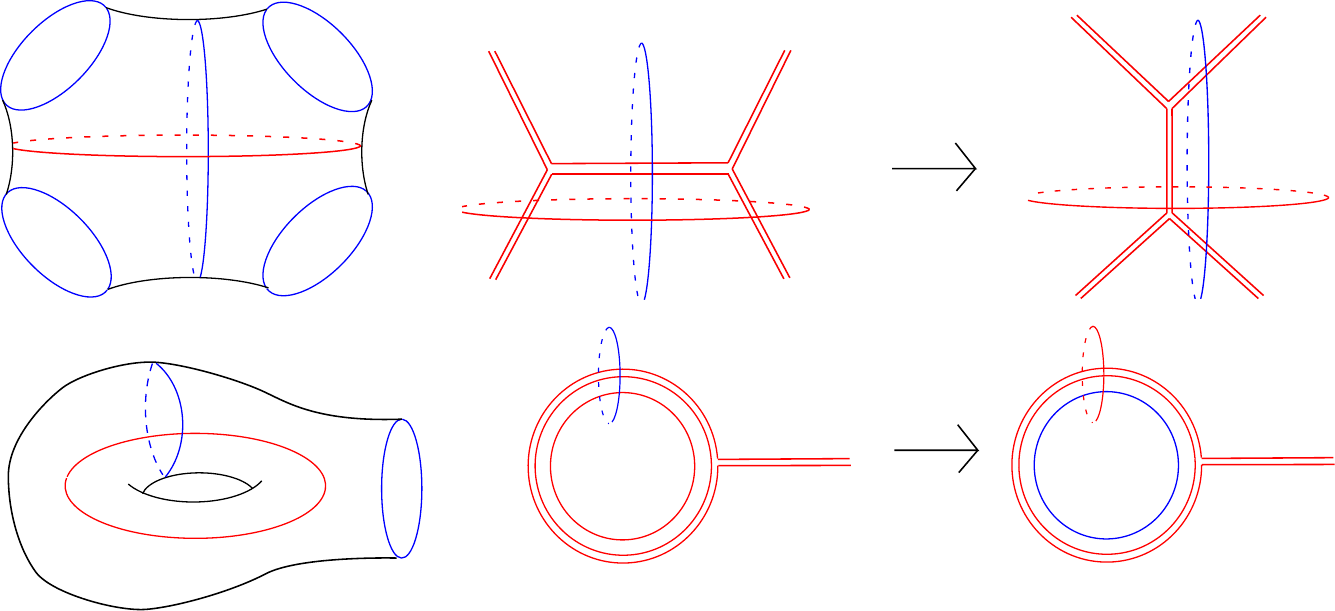
\caption{Elementary moves of pants decompositions and  dual graphs. The portion of the dual graph affected by the elementary move is indicated by the red double lines
(grey double lines in black-and-white/grey scale).
In both moves the closed curves shown in the interior of the surface shown are exchanged. }\label{fig:ASmoves}
\end{figure}

\begin{proposition}\label{prop:ASmovecomputation} If $P, Q$ are pants decompositions that differ by an elementary move, then
$n(P,Q)=n(Q,P)=2$.
\end{proposition}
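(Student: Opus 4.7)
The plan is to reduce the problem to a computation involving a single new curve, then apply the upper bound from Theorem \ref{thm:Asymptotic}(2) together with the lower bound from Theorem \ref{thm:lower}. Write $Q = \beta \cup (Q \cap P)$, where $\beta$ is the single curve replacing $\alpha_{e_0} \in P$ under the elementary move, and every curve of $Q \cap P$ is literally a curve of $P$, hence parallel to one. Lemma~\ref{lemma:split} then gives $n(Q,P) = n(\beta, P)$, and similarly $n(P,Q) = n(\alpha_{e_0}, Q)$ after noting that the inverse of an elementary move is again an elementary move of the same type. So it suffices to prove $n(\beta, P) = 2$ for $\beta$ the new curve.

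Next I would record the intersection pattern of $\beta$ with $P$. By the definition of the two moves, $I(\beta, \alpha_e) = 0$ for $e \neq e_0$, while $I(\beta, \alpha_{e_0}) = 2$ in the $A$-move case and $I(\beta, \alpha_{e_0}) = 1$ in the $S$-move case. Applying part (2) of Theorem~\ref{thm:Asymptotic}, any ${\bf k} \neq {\bf 0}$ with $G_{\bf k}^{\beta} \neq 0$ must satisfy ${\bf k}(e) = 0$ for every $e \neq e_0$, together with ${\bf k}(e_0) \equiv I(\alpha_{e_0}, \beta) \pmod 2$ and $|{\bf k}(e_0)| \leq I(\alpha_{e_0}, \beta)$. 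Thus in the $A$-move case ${\bf k}(e_0) \in \{\pm 2\}$, and in the $S$-move case ${\bf k}(e_0) \in \{\pm 1\}$; in each case there are at most two admissible choices, so $n(\beta, P) \leq 2$.

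For the matching lower bound, apply Theorem~\ref{thm:lower} to the multicurve $\beta$. Since $I(\beta, \alpha_e) = 0$ for $e \neq e_0$, we have $M(\beta, P) = I(\beta, \alpha_{e_0})$, which equals $2$ or $1$ according to the type of move. The functions ${\bf k}_\sigma$ from the paragraph preceding Theorem~\ref{thm:lower} specialize to ${\bf k}_\sigma(e_0) = \sigma$ and ${\bf k}_\sigma(e) = 0$ for $e \neq e_0$, matching exactly the candidates left open by the upper bound. For each $\sigma \neq 0$ with $|\sigma| \leq M(\beta, P)$ and $\sigma \equiv M(\beta, P) \pmod 2$, Theorem~\ref{thm:lower} gives $G_{{\bf k}_\sigma}^{\beta} \neq 0$; there are exactly two such $\sigma$ in both cases, namely $\sigma = \pm 2$ or $\sigma = \pm 1$. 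This forces $n(\beta, P) \geq 2$, and combined with the upper bound yields $n(Q,P) = 2$. The identical argument with the roles of $P$ and $Q$ reversed yields $n(P,Q) = 2$. The only nontrivial input is Theorem~\ref{thm:lower}, which is stated but proved later; granted this, the verification is essentially bookkeeping of intersection numbers, and I do not anticipate a real obstacle.
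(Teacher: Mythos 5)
Your argument is correct, but it takes a different route from the paper's. The paper disposes of Proposition \ref{prop:ASmovecomputation} by citing the direct computation in \cite[Proposition 4.2]{RD}, or alternatively by combining the support/parity constraints of Theorem \ref{thm:Asymptotic}(2) with Remark \ref{rem:CM} (i.e.\ the Charles--March\'e result \cite[Theorem 5.1]{CM} together with \cite[Theorem 1.3]{RD}), which already guarantees $G^{Q}_{\bf m}\neq 0$ for the two extremal shifts ${\bf m}(e_0)=\pm I(Q,\alpha_{e_0})$; since $I(Q,\alpha_e)=0$ for $e\neq e_0$ and $I(Q,\alpha_{e_0})\in\{1,2\}$, the count is exactly $2$. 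You instead first invoke Lemma \ref{lemma:split} to reduce to the single new curve $\beta$ (a reduction the paper does not need, since $n$ is defined for the multicurve $Q$ directly and $I(Q,\alpha_e)=I(\beta,\alpha_e)$ anyway), use the same upper bound from Theorem \ref{thm:Asymptotic}(2), and then obtain the lower bound from the paper's own Theorem \ref{thm:lower} with $M(\beta,P)=I(\beta,\alpha_{e_0})\in\{1,2\}$, whose admissible $\sigma$ are exactly $\pm1$ or $\pm2$. This is logically sound and non-circular: Theorem \ref{thm:lower} is stated before the proposition and its proof in Sections \ref{sec:fusions}--\ref{sec:five} nowhere uses Proposition \ref{prop:ASmovecomputation}. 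The trade-off is that your route makes the proposition depend on the heaviest technical result of the paper (whose proof comes later), whereas the paper's route rests on a short prior computation or on the external Charles--March\'e input and is available immediately; on the other hand, your version is self-contained within the present paper modulo Theorem \ref{thm:lower}. Your minor bookkeeping points (that in the $A$-move case ${\bf k}(e_0)=0$ is also allowed by Theorem \ref{thm:Asymptotic}(2) but excluded from the count, and that the inverse of an elementary move is again an elementary move, so the roles of $P$ and $Q$ can be swapped) are handled correctly.
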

\begin{proof}The proof is given in \cite[Proposition 4.2]{RD}. Alternatively, the proposition follows from \cite[Theorem 1.3]{RD} and  \cite[Theorem 5.1]{CM}
as mentioned in Remark \ref{rem:CM}:  Let $\alpha_{e_0}$ be the curve of $P$ that is changed using the $A$- or $S$-move. 
Since $I(Q, \alpha_e)=0$ for any $e\neq e_0$, and $I(Q, \alpha_{e_0})\in \{1,2\}$, Theorem \ref{thm:Asymptotic} and Remark \ref{rem:CM} imply the result.
\end{proof}

The proof of Theorem \ref{thm:lower} will occupy the next three sections. By Lemma \ref{lemma:split}, during the entire course of the proof, we may and will assume that the multicurve $\gamma$
does not have any component isotopic to a curve in $P$. That is  $I(\gamma, \alpha_e)\neq 0$ for each $ \alpha_e\in P$.

With  the statement of Theorem \ref{thm:inequalitiesgeneral} at hand, the reader who is eager to move to the proofs of Theorems \ref{thm:QI} and
\ref{thm:volume}    could move to Section \ref{sec:TSconnections}  without loss of continuity.


\section{ Dehn-Thurston position and  Fusion rules} 
\label{sec:fusions}

In this section we  review the basics of Dehn-Thurston position for curves on surfaces and the fusion theory for computing
coefficients of curve operators, both adapted in the setting that will be used for the
proof of Theorem \ref{thm:lower}.

\subsection{Dehn-Thurston position and swift number of arcs} Let us fix a pants decomposition 
$P=\{\alpha_e \ | \ e \in E\}$ of $\s$ with  dual graph $\Gamma$, and
with set of edges $E$. We replace each curve $\alpha_e$ in $P$ by two parallel copies which we denote by $\alpha_e,\alpha_e'$,  and we let $P':=\{\alpha'_e \ | \ e \in E\}.$

Therefore we get a decomposition of $\Sigma$ into $2g-2$ pairs of pants and $3g-3$ annuli. 

\begin{definition} \label{decomposition} The set of curves $P\cup P'$ will be called a \emph{decomposition system} of $\Sigma.$
We will refer to the components  of $\Sigma\setminus (P\cup P')$ as the pieces of the system $P\cup P'$.
\end{definition}

\begin{figure}[!h]
\centering
\def \svgwidth{.75\columnwidth}
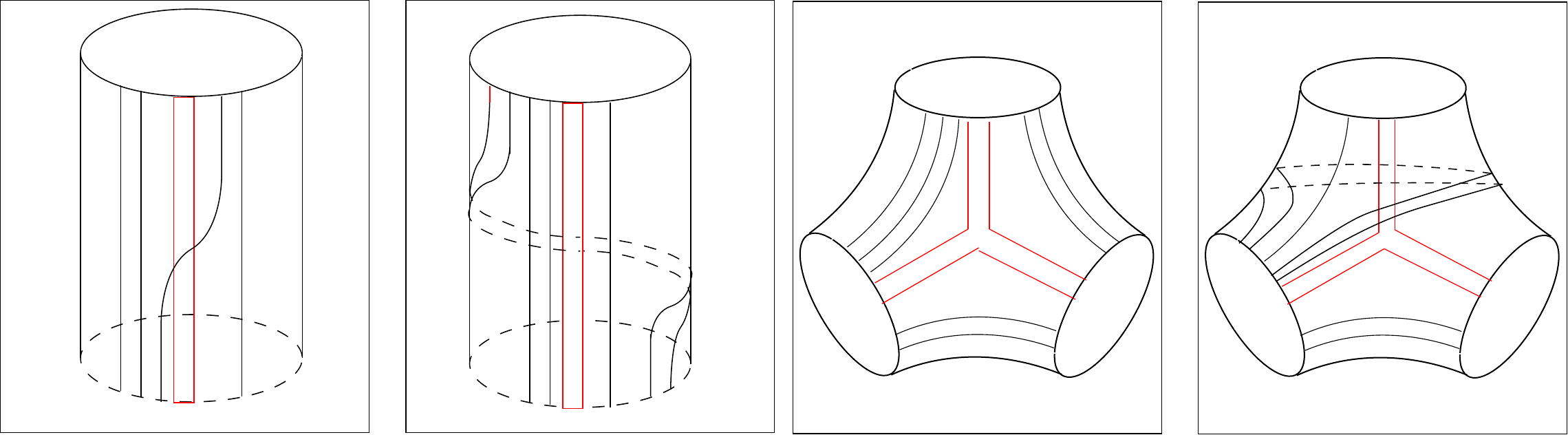
\caption{Patterns of a  multicurve in Dehn-Thurston position on  pieces of a decomposition system.}
\label{fig:DehnThurston}
\end{figure}
A multicurve $\gamma$ on $\Sigma$ is said to be in \emph{Dehn-Thurston position} with respect to $P\cup P'$, if we have the following: 
\begin{itemize}
\item[-] The intersection  of  $\gamma$ with 
each piece of $P\cup P'$  consists  of properly embedded arcs so that the number of intersection points  with each curve $\alpha_e$ (or $\alpha_e'$) is  $I(\gamma,\alpha_e).$
\item[-] On the  pants pieces   of $P\cup P'$ there are two patterns for arcs  of $\gamma$: Arcs run between different boundary components of
a  pants piece, or they have both endpoints on the same boundary component of the piece. Arcs of the later kind will be referred as \emph{come-back patterns}.
See Figure \ref{fig:DehnThurston}, where the portion of $\Gamma$ on the decomposition pieces illustrated is shown in red double line (shown as  grey in white-and-black/grey scale).

\item[-] On the annuli pieces $\gamma $ is determined, up to  \emph{ fractional Dehn twists}, by the number of intersection points with the boundary of the annuli. Each arc is assigned the \emph{swift number}, which is defined in Definition \ref{def:swift} below.
Examples are given in  Figure \ref{fig:DehnThurston}.
 \end{itemize}

Given a decomposition system of $\Sigma$ any multicurve  $\gamma$ can be isotoped into Dehn-Thurston position \cite{FLP}.

To give the precise definition of swift numbers we need some conventions  on annuli pieces of $P\cup P'$:
We consider such an annulus $A=S^1\times [0, 1]$ embedded in ${\mathbb R}^3$ so that, for $i=0,1$, $\partial A^{i}:=S^1\times \{i\}$
 is identified with the unit circle centered at the origin on the plane $z=i$. Thus, traveling in the counter clockwise direction we can talk of the first, second, third and fourth quarter of $\partial A^{i}$.

 Given $\gamma$ in Dehn-Thurston position, we  will assume that the points of $\gamma \cap \partial A^{i}$ occur in the third and fourth quarter of $ \partial A^{i}$.
Recall that $\s$ is obtained by gluing $\Gamma$ to a second copy of itself  $\Gamma'$.
We will assume that $e_S:=\Gamma \cap A$ (resp $e_N:=\Gamma' \cap A $)
is a straight segment on $A$
that runs from the south pole (resp. north pole)
of  $ \partial A^{1}$ to the south pole (resp. north pole) of  $ \partial A^{0}$. In Figure \ref{fig:DehnThurston}, $e_S$ is shown in red double lines (grey in white-and-black/grey scale) while $e_N$ is omitted.

\begin{definition}\label{def:swift} 
With the notation and setting as above, let $y$ be an arc $\gamma \cap A$  that runs from $ \partial A^{1}$ to $ \partial A^{0}$
and let $|y\cap (e_N\cup e_S)|$ denote the geometric intersection number of $y$ with the arcs $e_N,e_S$.  The
\emph{swift number} of $y$, denoted by $t_y$ 
is defined as follows:   
\begin{itemize}
\vskip 0.04in

\item [-] We have $t_y=0$,  if $|y\cap (e_N\cup e_S)|=0$.
\vskip 0.04in

\item[-] We have $t_y:=|y\cap (e_N\cup e_S)|$,
 if  either
 \vskip 0.03in
 \begin{enumerate}[(i)]
  \item $y\cap \partial A^{1}$ is in the third quarter of $ \partial A^{1}$ and the first point of  the intersection of $y$ with $e_N\cup e_S$ is on $e_N$; or
   \vskip 0.03in
 \item $t_y:=|y\cap (e_N\cup e_S)|$,
 if $y\cap \partial A^{1}$ is in the fourth quarter of $ \partial A^{1}$ and the first point of  the intersection of $y$ with $e_N\cup e_S$ is on $e_S$.
 \end{enumerate}
 
 \vskip 0.04in
 
 \item [-]We have $t_y:=-|y\cap (e_N\cup e_S)|$,
 if either
  \vskip 0.03in
 \begin{enumerate}[(i)]
 \item $y\cap \partial A^{1}$ is in the third quarter of $ \partial A^{1}$ and the first point of  the intersection of $y$ with $e_N\cup e_S$ is on $e_S$;  or
  \vskip 0.03in
 \item if $y\cap \partial A^{1}$ is in the fourth quarter of $ \partial A^{1}$ and the first point of  the intersection of $y$ with $e_N\cup e_S$ is on $e_N$.
 \end{enumerate}

\end{itemize}
\end{definition}
To illustrate Definition \ref{def:swift}, we discuss the swift numbers of the arcs appearing on the two cylinder pieces of Figure \ref{fig:DehnThurston}:
In the first from the left panel of the figure, we have four arcs, three of which have swift number 0 and one has swift number 1.
In the second panel
of Figure \ref{fig:DehnThurston}, we have three arcs of swift number 0 and two arcs of swift number -1.

\subsection{Fusion rules at a limit} Given a multicurve $\gamma$ in Dehn-Thurston position with respect to a decomposition system $P\cup P'$, we can use
a version of the fusion rules of Masbaum-Vogel \cite{MV94}  to compute  the 
coefficient functions $G_{\bf k}^{\gamma}$ of the operators $T^{\gamma}_r$. The adaptation of the fusion rules of \cite{MV94} 
that applies to our setting, with the orthonormal bases of the  TQFT spaces corresponding to $P$ (described in Section \ref{sec:two}), was given in \cite{RD}.
For the convenience of the reader we recall these fusion rules in Figure \ref{fig:fusionRules} in Appendix \ref{sec:appendix} of the paper. For more details the reader  is referred to \cite[Section 4]{RD}.

In order to simplify the search for non-vanishing coefficients $G_k^{\gamma},$ for the proof of Theorem \ref{thm:lower},  it is convenient for us consider their limit when $r\rightarrow\infty,$  $c_e=a$ for all edges $e$, and $\frac{a}{r}\rightarrow \theta$ where $\theta \in [0,1].$
 This is equivalent to just computing the evaluation $G_k^{\gamma}(\theta,0),$ and thus clearly a lower bound for the number of non-vanishing coefficients $G_k^{\gamma}.$ 
The advantage of working with this limit is that we work with a simplified set of fusion rules by replacing the general fusion rules given in Figure \ref{fig:fusionRules} by their limit
when $r\rightarrow\infty$, we have   $c_e=a$ for every edge $e$ of $\Gamma$,  and $\frac{a}{r}\rightarrow \theta$ where $\theta \in [0,1].$ 
These leads to simplified coefficients
as shown  Figure \ref{fig:fusionRules2}. In the figures, the colors of thick red edges are labelled by letters. The  thin black edges are colored by $2$.
That is they are colored by the fundamental $U_q(sl_2)$-representation of dimension $2$, which, as explained in Remark \ref{twovsone}, corresponds to the color $1$ of
\cite{BHMV2}.

\begin{figure}[!h]
\centering
\def \svgwidth{.70\columnwidth}
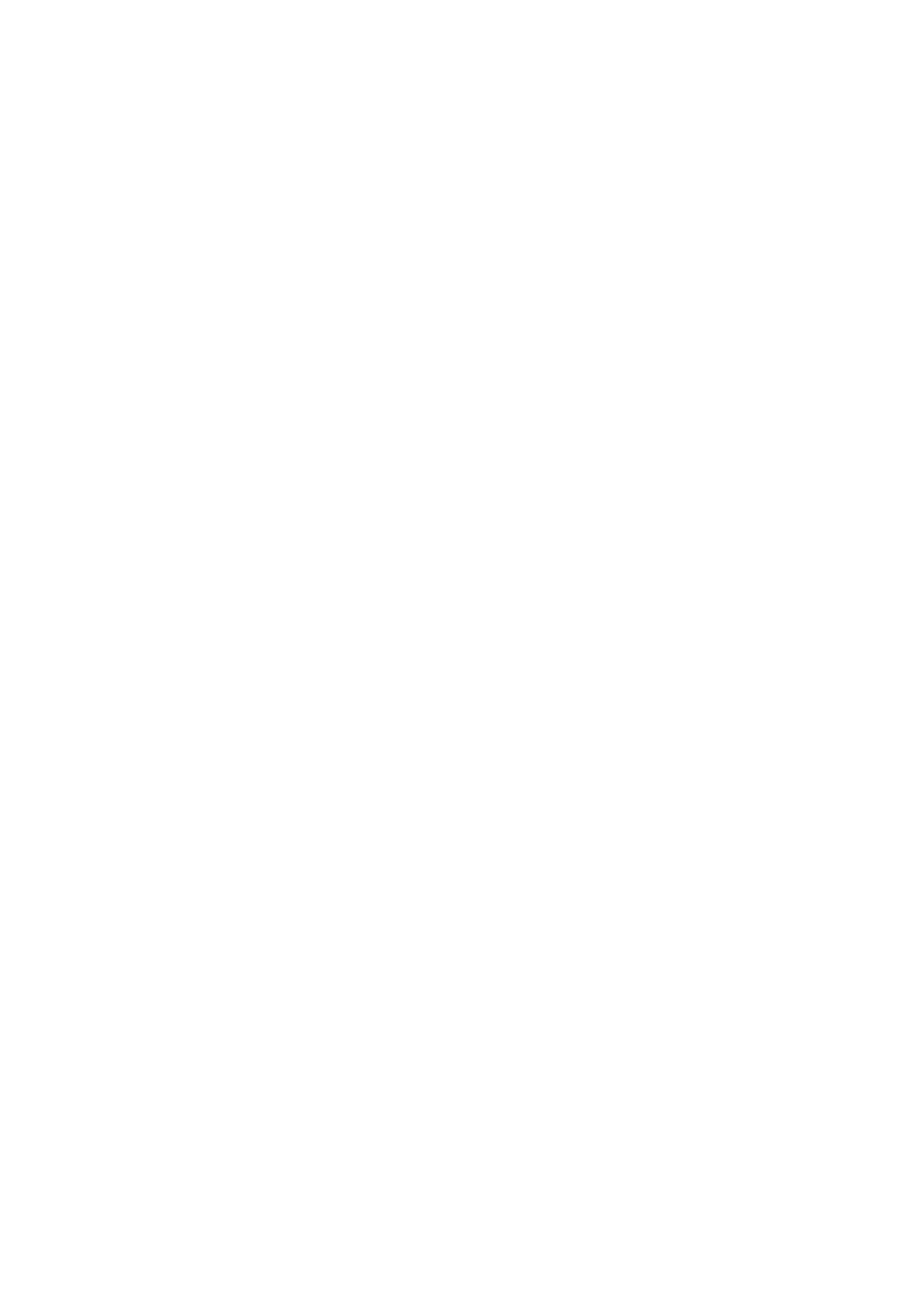
\caption{The limit of fusion rules when $c_e=a,$ and $r\rightarrow \infty, \frac{a}{r}\rightarrow \theta.$ Here we let $\langle \theta \rangle=\sin(\pi \theta)$.
The red thick lines (shown in grey in black-and-white/grey scale)
indicate portions of the graph $\Gamma$.
The black thiner lines indicate  arcs of $\gamma$ and the color on these portions is $2$. }
\label{fig:fusionRules2}
\end{figure}
\begin{remark}\label{rem:sign} {\rm In the above limiting setting,  the fusion coefficients only depend of the type of the fusion rule used (i.e. strand, positive/negative half-twist, positive/negative/mixed triangle, positive/negative bigon) and not on  the colors of edges.
We also note that the $r$-admissibility condition (i) of Definition \ref{def:admissible} implies that $a$ is odd.  As a result the signs $(-1)^{a+1}$  for the fusion coefficients
of twist rules in the second row of Figure \ref{fig:fusionRules2}  do not appear in our computations.}
\end{remark}


\section{Fusion computations at a limit}
\label{sec:fusion_computations}

\subsection{A state sum formula} With the notation and terminology of Section  \ref{sec:fusions}, let us fix a decomposition system $P\cup P'=\{\alpha_e, \alpha'_e \ | \ e\in E\}$ of $\Sigma$, with dual graph $\Gamma$.
Let $\gamma$ be a multicurve on $\Sigma$ that is in
Dehn-Thurston position with respect to $P\cup P'$.

\begin{definition}\label{def:totalshift} A  \emph{total color shift} on $\Gamma$ is an $E$-tuple $\sigma=(\sigma_e| e\in E)$, defined by a function $\sigma: E\longrightarrow \ZZ$,
 such that
$\abs{\sigma_e} \leq I(\gamma, {\alpha}_e)$.
\end{definition}

Recall that we are considering the functions
$G_k^{\gamma}$  of Theorem \ref{thm:Asymptotic},
at the limit when $r\rightarrow\infty,$  $c_e=a$ for all edges $e$, and $\frac{a}{r}\rightarrow \theta$ where $\theta \in [0,1].$ 
 Let $\phi_a$ be the TQFT vector corresponding to the coloring that assigns $a$ to each edge of $\Gamma$.
 Given a total  color shift $\sigma$, we will use $C(\sigma)$ to denote the limit of the coefficients of $T_r^{\gamma}\phi_a$ along $\phi_{a+\sigma}.$
 We write

\begin{equation}
\label{eq:coeffcient}
T_r^{\gamma} \phi_a= \sum_{\sigma} C(\sigma) \ \phi_{a+\sigma} \ +o(1),\end{equation}
where $\sigma$ ranges over all possible total color shifts, and $o(1)$ denotes a quantity that converges to $0$ as $r\rightarrow \infty$.

Our next goal is to give certain state sum expressions for the coefficients $C(\sigma)$.
Before we are able to do this we need some preparation: Let $X$ be a pants piece of the decomposition system $P\cup P'$.
Note that $X$ contains a $Y$-shaped portion of $\Gamma$, which we will denote by $Y$. 
Now $X$ can be constructed by gluing two copies  of $Y$, say $Y$ and $Y'$ along two of their sides.
 If $z$ is a come-back pattern of $\gamma$ on $X$, then $z$ intersects each of $Y, Y'$ exactly once.

 \begin{definition}\label{def:crossings} With the notation and setting as above, we will call the intersection point of $z$ with $ Y$, \emph{the overcrossing } of the come-back pattern.
 We will call the intersection point of $z$ with  $ Y'$, \emph{the undercrossing} of the come-back pattern.
 \end{definition}
 
 The multicurve $\gamma$ gives rise to a set of arcs $B$ on
  $ \Sigma$, such that for any  arc $y\in B$, with endpoints $p_1, p_2$, we have one of the following:
  \vskip 0.02in
 \begin{itemize}
 \item[-] $y$ lies on a pants piece of the decomposition system and  $p_1,p_2$ are on $P\cap \gamma$;
 \vskip 0.02in

 \item[-] $y$ lies on  a  pants piece and $p_1$ is  on $P\cap \gamma$ while $p_2$ is at an overcrossing or undercrossing of a come-back pattern of $\gamma$;
 \vskip 0.02in

 \item[-] $y$ lies on an annulus and $p_1$ is on $\alpha_e \cap \gamma$, for some $\alpha_e\in P$, while $p_2$ is on  $\alpha'_e \cap \gamma.$
 \end{itemize}
 We will refer to $B$ as an \emph{arc system} for $\gamma$ with respect to  $P\cup P'$.
  Let $B_{\rm{end}}$ denote the set of all the endpoints of the arcs in $B$.
 
 \begin{definition}\label{def:states} (\emph{States}) Let $P\cup P'$, $\gamma$, $B$ and $B_{\rm{end}}$  be fixed as above.
 Given   a total color shift $\sigma=(\sigma_e | e\in E)$, a \emph{state} on $B$ is a function
$s: B_{\rm{end}}\longmapsto \lbrace \pm 1 \rbrace,$
 such that:
 \begin{enumerate}
 \item[(a)] For any $e\in E,$ 
\begin{equation}\label{sumcurve}
\underset{p\in \gamma \cap \alpha_e}{\sum} s(p)=\underset{p\in \gamma \cap \alpha_e'}{\sum} s(p)=\sigma_e.
\end{equation}
 \item[(b)] If $p,q$ are endpoints corresponding to the overcrossing and undercrossing of the same come-back pattern of $\gamma$, then $s(p)+s(q)=0.$
  \end{enumerate}
 \end{definition}

For fixed $\sigma$, we will use $\mathcal{S}_{\sigma}$ to denote the set of all states on $B$ corresponding to $\sigma$.
 We compute the coefficients $C(\sigma)$ using the fusion rules of Figure \ref{fig:fusionRules2}. For this we need two auxiliary lemmas that we now state.
 The proofs of the lemmas are given in Section \ref{section:auxiliary}.
  \vskip 0.04in

\begin{lemma}\label{lemma:sliding}{{\rm (}\emph{Sliding  Lemma}{\rm )}} With the conventions of Figure \ref{fig:fusionRules2} and for any $\varepsilon,\mu\in \lbrace \pm 1 \rbrace$, we have
\begin{center}
\def \svgwidth{.7\columnwidth}
\begingroup%
  \makeatletter%
  \providecommand\color[2][]{%
    \errmessage{(Inkscape) Color is used for the text in Inkscape, but the package 'color.sty' is not loaded}%
    \renewcommand\color[2][]{}%
  }%
  \providecommand\transparent[1]{%
    \errmessage{(Inkscape) Transparency is used (non-zero) for the text in Inkscape, but the package 'transparent.sty' is not loaded}%
    \renewcommand\transparent[1]{}%
  }%
  \providecommand\rotatebox[2]{#2}%
  \newcommand*\fsize{\dimexpr\f@size pt\relax}%
  \newcommand*\lineheight[1]{\fontsize{\fsize}{#1\fsize}\selectfont}%
  \ifx\svgwidth\undefined%
    \setlength{\unitlength}{670.82627664bp}%
    \ifx\svgscale\undefined%
      \relax%
    \else%
      \setlength{\unitlength}{\unitlength * \real{\svgscale}}%
    \fi%
  \else%
    \setlength{\unitlength}{\svgwidth}%
  \fi%
  \global\let\svgwidth\undefined%
  \global\let\svgscale\undefined%
  \makeatother%
  \begin{picture}(1,0.13886528)%
    \lineheight{1}%
    \setlength\tabcolsep{0pt}%
    \put(0,0){\includegraphics[width=\unitlength,page=1]{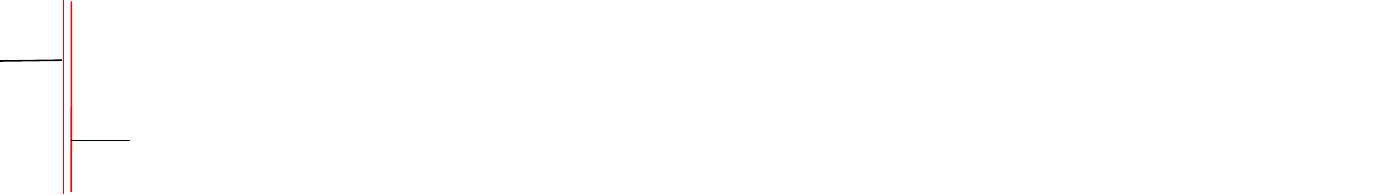}}%
    \put(0.05910307,0.00896349){\color[rgb]{0,0,0}\makebox(0,0)[lt]{\lineheight{1.25}\smash{\begin{tabular}[t]{l}$a$\end{tabular}}}}%
    \put(0.05910308,0.05208727){\color[rgb]{0,0,0}\makebox(0,0)[lt]{\lineheight{1.25}\smash{\begin{tabular}[t]{l}$a+\varepsilon$\end{tabular}}}}%
    \put(0.05730626,0.1101846){\color[rgb]{0,0,0}\makebox(0,0)[lt]{\lineheight{1.25}\smash{\begin{tabular}[t]{l}$a+\varepsilon+\mu$\end{tabular}}}}%
    \put(0.16230791,0.03186028){\color[rgb]{0,0,0}\makebox(0,0)[lt]{\lineheight{1.25}\smash{\begin{tabular}[t]{l}$=$\end{tabular}}}}%
    \put(0,0){\includegraphics[width=\unitlength,page=2]{SlidingA.pdf}}%
    \put(0.26883057,0.01049215){\color[rgb]{0,0,0}\makebox(0,0)[lt]{\lineheight{1.25}\smash{\begin{tabular}[t]{l}$a$\end{tabular}}}}%
    \put(0.26725439,0.05409531){\color[rgb]{0,0,0}\makebox(0,0)[lt]{\lineheight{1.25}\smash{\begin{tabular}[t]{l}$a+\mu$\end{tabular}}}}%
    \put(0.2668593,0.11159369){\color[rgb]{0,0,0}\makebox(0,0)[lt]{\lineheight{1.25}\smash{\begin{tabular}[t]{l}$a+\varepsilon+\mu$\end{tabular}}}}%
    \put(0,0){\includegraphics[width=\unitlength,page=3]{SlidingA.pdf}}%
    \put(0.54761938,0.00858486){\color[rgb]{0,0,0}\makebox(0,0)[lt]{\lineheight{1.25}\smash{\begin{tabular}[t]{l}$a$\end{tabular}}}}%
    \put(0.54761944,0.05170864){\color[rgb]{0,0,0}\makebox(0,0)[lt]{\lineheight{1.25}\smash{\begin{tabular}[t]{l}$a+\varepsilon$\end{tabular}}}}%
    \put(0.65760054,0.03317571){\color[rgb]{0,0,0}\makebox(0,0)[lt]{\lineheight{1.25}\smash{\begin{tabular}[t]{l}$=-$\end{tabular}}}}%
    \put(0,0){\includegraphics[width=\unitlength,page=4]{SlidingA.pdf}}%
    \put(0.54794228,0.11296386){\color[rgb]{0,0,0}\makebox(0,0)[lt]{\lineheight{1.25}\smash{\begin{tabular}[t]{l}$a+\varepsilon+\mu$\end{tabular}}}}%
    \put(0,0){\includegraphics[width=\unitlength,page=5]{SlidingA.pdf}}%
    \put(0.83671225,0.00858486){\color[rgb]{0,0,0}\makebox(0,0)[lt]{\lineheight{1.25}\smash{\begin{tabular}[t]{l}$a$\end{tabular}}}}%
    \put(0.83671232,0.05170864){\color[rgb]{0,0,0}\makebox(0,0)[lt]{\lineheight{1.25}\smash{\begin{tabular}[t]{l}$a+\mu$\end{tabular}}}}%
    \put(0.83703522,0.11296387){\color[rgb]{0,0,0}\makebox(0,0)[lt]{\lineheight{1.25}\smash{\begin{tabular}[t]{l}$a+\varepsilon+\mu$\end{tabular}}}}%
    \put(0,0){\includegraphics[width=\unitlength,page=6]{SlidingA.pdf}}%
    \put(0.4009281,0.05778418){\makebox(0,0)[lt]{\lineheight{1.25}\smash{\begin{tabular}[t]{l}and\end{tabular}}}}%
  \end{picture}%
\endgroup%

\end{center} 
\end{lemma}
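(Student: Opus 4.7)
The plan is to prove both identities by direct case analysis on the signs $\varepsilon,\mu \in \{\pm 1\}$, applying the limit fusion rules compiled in Figure~\ref{fig:fusionRules2} to each side. The conceptual simplification, already emphasized in Remark~\ref{rem:sign}, is that once we pass to the limit $r \to \infty$ with $c_e = a$ and $\frac{a}{r} \to \theta$, each fusion coefficient depends only on the \emph{type} of the local rule applied (strand, positive/negative triangle, mixed triangle, positive/negative half-twist, or bigon) and not on the particular value of the color $a$. This color-independence is what makes the sliding identities plausible: the two sides are obtained from one another by merely reordering which of the two thin strands interacts with the thick red edge first.

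For the first identity, I would expand the LHS as a product $F_\varepsilon(a)\cdot F'_\mu(a+\varepsilon)$ of two limit fusion coefficients corresponding to the two successive color shifts $a \to a+\varepsilon \to a+\varepsilon+\mu$, and expand the RHS analogously as $F_\mu(a)\cdot F'_\varepsilon(a+\mu)$. By color-independence, $F_\varepsilon(a) = F_\varepsilon(a+\mu)$ and similarly for $F'_\mu$, so the two products agree in value (as scalars commute). The four cases $(\varepsilon,\mu)\in\{\pm 1\}^2$ each reduce to checking which of the rules in Figure~\ref{fig:fusionRules2} appears: diagonal steps use the strand coefficient $\frac{\langle \theta/2\rangle}{\langle\theta\rangle}$; mixed $\varepsilon = -\mu$ steps use the positive/mixed-triangle coefficients $\pm\frac{(\langle 3\theta/2\rangle\langle\theta/2\rangle)^{1/2}}{\langle\theta\rangle}$; and any forbidden bigon configuration makes both sides vanish identically, so the identity still holds.

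For the second identity, the two diagrams differ by a half-twist of one thin strand past the other, which replaces one of the fusion coefficients by its half-twisted counterpart. From the half-twist rules in Figure~\ref{fig:fusionRules2}, this introduces a factor of $(-1)^{a+1} e^{\pm i\pi\theta/2}$, with the sign of the phase determined by the direction (positive or negative) of the twist. Pairing up the two halves of the configuration produces a compensating factor $e^{+i\pi\theta/2}\cdot e^{-i\pi\theta/2}=1$, while the product of the two $(-1)^{a+1}$ factors equals $+1$ by Remark~\ref{rem:sign} (admissibility forces $a$ to be odd). The remaining sign $-1$ predicted by the lemma comes from the explicit minus sign in front of the negative half-twist rule in Figure~\ref{fig:fusionRules2}.

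The main obstacle I expect is purely bookkeeping: one must consistently orient the crossings, track from which side each thin strand approaches the thick red edge, and distinguish positive from negative half-twists and mixed-triangle rules in each of the four sign cases. The underlying algebraic content is just commutativity of scalar multiplication together with the two identities $(-1)^{a+1}=1$ and $e^{+i\pi\theta/2}\cdot e^{-i\pi\theta/2}=1$; the care lies in reading the correct entry of Figure~\ref{fig:fusionRules2} in each configuration.
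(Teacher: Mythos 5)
There is a genuine gap in both halves of your argument. For the first identity, you treat each side as a product of scalar fusion coefficients, $F_\varepsilon(a)\cdot F'_\mu(a+\varepsilon)$ versus $F_\mu(a)\cdot F'_\varepsilon(a+\mu)$, and conclude by ``color-independence'' and commutativity of scalars. But the two sides of the Sliding Lemma are not scalars: they are skein diagrams with the black strands still attached (they continue into other pieces of the decomposition), differing precisely in the intermediate color on the red edge ($a+\varepsilon$ versus $a+\mu$). No single rule of Figure \ref{fig:fusionRules2} converts one diagram into the other, so asserting that ``the two products agree in value'' does not establish the diagrammatic identity at all. The actual mechanism is to apply the strand rule and then a triangle rule to the left-hand side, producing a sum of two diagrams with intermediate colors $a+\varepsilon+\mu\pm 1$; the coefficients of this expansion are \emph{not} among the limit rules of Figure \ref{fig:fusionRules2}, so one must return to the general $r$-dependent triangle rules of Figure \ref{fig:fusionRules} and take the limit $r\to\infty$, $\tfrac{a}{r}\to\theta$. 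The computation then shows that one coefficient vanishes in the limit and the other equals $\pm 1$, and since $a+\varepsilon+\mu\mp 1=a+\mu$ in the respective cases $\varepsilon=\pm 1$, the surviving term is exactly the right-hand diagram. This vanishing of the cross term is the whole content of the lemma and is absent from your argument.

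For the second identity, the crossing in the picture is a crossing between two black (color $2$) strands of $\gamma$, not a strand passing over or under the dual graph copy $\Gamma'$; hence the half-twist rules of Figure \ref{fig:fusionRules2}, which you invoke to produce the phases $e^{\pm i\pi\theta/2}$ and the factors $(-1)^{a+1}$, simply do not apply to this configuration. The correct tool is the Kauffman bracket skein relation applied to the crossing, using that $\zeta_r\to -1$ in the limit, which yields two resolution terms; one then simplifies the first term with the bigon rule and splits into the cases $\varepsilon=\mu$ (where that term vanishes) and $\varepsilon=-\mu$ (where a further short computation is needed). The overall minus sign in the lemma arises from this resolution and case analysis, not from ``the explicit minus sign in front of the negative half-twist rule,'' so your explanation of the sign is not a valid derivation.
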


The second auxiliary lemma shows how to simplify some arcs in annuli pieces of the decomposition system. Those arcs are the type of arcs that remain in annuli pieces after we applied the strand rules.

 \vskip 0.04in

\begin{lemma}\label{lemma:singleArcAnnuli} With the conventions of Figure \ref{fig:fusionRules2} and for any $\varepsilon,\mu \in \lbrace \pm 1 \rbrace$,  we have the following identity
\begin{center}
\def \svgwidth{.45\columnwidth}
\begingroup%
  \makeatletter%
  \providecommand\color[2][]{%
    \errmessage{(Inkscape) Color is used for the text in Inkscape, but the package 'color.sty' is not loaded}%
    \renewcommand\color[2][]{}%
  }%
  \providecommand\transparent[1]{%
    \errmessage{(Inkscape) Transparency is used (non-zero) for the text in Inkscape, but the package 'transparent.sty' is not loaded}%
    \renewcommand\transparent[1]{}%
  }%
  \providecommand\rotatebox[2]{#2}%
  \newcommand*\fsize{\dimexpr\f@size pt\relax}%
  \newcommand*\lineheight[1]{\fontsize{\fsize}{#1\fsize}\selectfont}%
  \ifx\svgwidth\undefined%
    \setlength{\unitlength}{225.31912255bp}%
    \ifx\svgscale\undefined%
      \relax%
    \else%
      \setlength{\unitlength}{\unitlength * \real{\svgscale}}%
    \fi%
  \else%
    \setlength{\unitlength}{\svgwidth}%
  \fi%
  \global\let\svgwidth\undefined%
  \global\let\svgscale\undefined%
  \makeatother%
  \begin{picture}(1,0.29950518)%
    \lineheight{1}%
    \setlength\tabcolsep{0pt}%
    \put(0,0){\includegraphics[width=\unitlength,page=1]{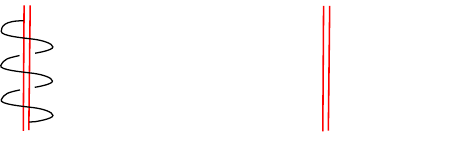}}%
    \put(0.07525749,0.27252957){\color[rgb]{1,0,0}\makebox(0,0)[lt]{\lineheight{1.25}\smash{\begin{tabular}[t]{l}$a+\varepsilon$\end{tabular}}}}%
    \put(0.07399657,0.22461529){\color[rgb]{1,0,0}\makebox(0,0)[lt]{\lineheight{1.25}\smash{\begin{tabular}[t]{l}$a$\end{tabular}}}}%
    \put(0.06895299,0.0052183){\color[rgb]{1,0,0}\makebox(0,0)[lt]{\lineheight{1.25}\smash{\begin{tabular}[t]{l}$a+\mu$\end{tabular}}}}%
    \put(0.71073934,0.27114043){\color[rgb]{1,0,0}\makebox(0,0)[lt]{\lineheight{1.25}\smash{\begin{tabular}[t]{l}$a+\varepsilon$\end{tabular}}}}%
    \put(0.12064997,0.13677243){\color[rgb]{0,0,0}\makebox(0,0)[lt]{\lineheight{1.25}\smash{\begin{tabular}[t]{l}$=(-1)^{(a+1)t}\varepsilon^{t+1}z^{\varepsilon t}\delta_{\varepsilon,\mu}$\end{tabular}}}}%
  \end{picture}%
\endgroup%

\end{center}
where  the arc on the left hand side has swift number $t$, and
$\delta_{\varepsilon,\mu}$ is the Kronecker symbol. \end{lemma}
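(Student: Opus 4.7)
The plan is to prove the identity by induction on $|t|$, peeling off one crossing with $e_N \cup e_S$ at a time using the twist rules of Figure \ref{fig:fusionRules2}, together with the Sliding Lemma \ref{lemma:sliding} to arrange the diagrams for fusion.  Throughout, write $z = e^{i\pi\theta/2}$, which is the scalar appearing in the half-twist rules.

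\textbf{Base case $t=0$.} In this case the arc meets neither $e_N$ nor $e_S$ in the annulus, so after applying the Sliding Lemma to push the arc to the boundary $\alpha_e$ (or $\alpha'_e$), it reduces to a configuration to which the strand rules of Figure \ref{fig:fusionRules2} (top row) apply directly. Matching the colors on the two sides of the arc forces $\varepsilon = \mu$, which produces the Kronecker $\delta_{\varepsilon,\mu}$, and the resulting coefficient is $\varepsilon$. This matches the formula $(-1)^{0}\varepsilon^{1}z^{0}\delta_{\varepsilon,\mu} = \varepsilon\,\delta_{\varepsilon,\mu}$, and the remaining diagram is a straight strand colored $a+\varepsilon$.

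\textbf{Inductive step.} Assume the identity for swift number $t-\mathrm{sign}(t)$ and suppose $t\neq 0$. The arc meets $e_N \cup e_S$ in $|t|$ points; the intersection point closest to $\partial A^1$ is, by Definition \ref{def:swift}, an intersection with $e_N$ or with $e_S$ whose nature is determined by the sign of $t$ and by the quarter of $\partial A^1$ in which the arc starts. Applying the Sliding Lemma, isotope the portion of the arc near this outermost crossing so that it becomes a clean half-twist of a strand colored $a+\varepsilon$ past a thin (color 2) strand coming from $e_N$ or $e_S$. Applying the appropriate positive or negative half-twist rule from the second row of Figure \ref{fig:fusionRules2} extracts a scalar factor of $\pm(-1)^{a+1}z^{\pm 1}$. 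The sign conventions of Definition \ref{def:swift} are set up precisely so that this factor equals $(-1)^{a+1}\,\varepsilon\,z^{\varepsilon\,\mathrm{sign}(t)}$; that is, the direction of the half-twist (determined by $\mathrm{sign}(t)$) multiplied by the direction of the color shift ($\varepsilon$) selects the correct branch of the twist rule. What remains is the same configuration with swift number reduced by $\mathrm{sign}(t)$, to which the inductive hypothesis applies.

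\textbf{Collecting the factors.} Multiplying the $|t|$ factors of $(-1)^{a+1}\varepsilon\, z^{\varepsilon\,\mathrm{sign}(t)}$ by the base-case contribution $\varepsilon\,\delta_{\varepsilon,\mu}$ gives
\[
(-1)^{(a+1)|t|}\,\varepsilon^{|t|+1}\,z^{\varepsilon\,\mathrm{sign}(t)\cdot|t|}\,\delta_{\varepsilon,\mu} = (-1)^{(a+1)t}\,\varepsilon^{t+1}\,z^{\varepsilon t}\,\delta_{\varepsilon,\mu},
\]
using $(-1)^{(a+1)|t|} = (-1)^{(a+1)t}$ (recall from Remark \ref{rem:sign} that $a$ is odd, but in any case $|t|$ and $t$ have the same parity) and $\varepsilon^{|t|+1} = \varepsilon^{t+1}$. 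This is the claimed coefficient.

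\textbf{Main obstacle.} The computational content of the induction is essentially a single half-twist application, and the Sliding Lemma handles the needed planar manipulations without difficulty. The real care is in the bookkeeping: matching the geometric sign convention for $t$ from Definition \ref{def:swift} (which quarter of $\partial A^1$ the arc starts in, and whether the first crossing is with $e_N$ or $e_S$) with the algebraic choice of positive versus negative half-twist rule from Figure \ref{fig:fusionRules2}, and checking that the $\varepsilon$-dependence is correctly absorbed into the exponent $\varepsilon t$ rather than $|t|$. This case-by-case verification of four configurations (two for the starting quarter, two for the first crossing type) is where one must be most careful, but it is a finite check once the conventions are fixed.
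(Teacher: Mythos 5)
Your proposal is correct and its computational engine is the same as the paper's: the coefficient is assembled from one half-twist extraction per crossing with $e_N\cup e_S$, a terminal bigon evaluation producing $\varepsilon\,\delta_{\varepsilon,\mu}$, and a finite sign check matching the conventions of Definition \ref{def:swift} to the positive/negative half-twist rules (the paper performs this check explicitly for one swift-number-$1$ pattern and handles the other three unit patterns by a rotation and a mirror symmetry, so your ``four-configuration check'' is exactly what it does). The organizational difference is how the swift-$t$ arc is reduced to unit contributions: you peel one crossing at a time by induction, applying the half-twist rule at the already-fused endpoint, so no intermediate sums ever appear; the paper instead applies the strand rule $t-1$ times to cut the arc into $|t|$ swift-number-$\pm1$ pieces, which creates a sum over intermediate color shifts and therefore requires the extra observation that a unit piece with opposite shifts at its two ends vanishes. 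Your route gives cleaner multiplicative bookkeeping, at the cost of having to verify that after each peel the remaining arc can be isotoped (Sliding Lemma) back into standard Dehn--Thurston position with swift number $t-\mathrm{sign}(t)$ and that the extracted factor is the same whether the peeled crossing is with $e_N$ or $e_S$; the paper's cut-all-at-once decomposition avoids maintaining such an inductive normal form, since each unit piece is manifestly one of the four standard patterns. One small correction: in your base case $t=0$ the evaluation is the bigon rule (bottom row of Figure \ref{fig:fusionRules2}), not the strand rule --- the strand rule is an expansion into two shifted terms and has already been applied to create the shifts $\varepsilon,\mu$; it is the bigon rule (with mismatched shifts giving $0$) that yields the factor $\varepsilon\,\delta_{\varepsilon,\mu}$ you quote, so your numerical conclusion is unaffected.
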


 \begin{figure}[!h]
\centering
\def \svgwidth{.55\columnwidth}
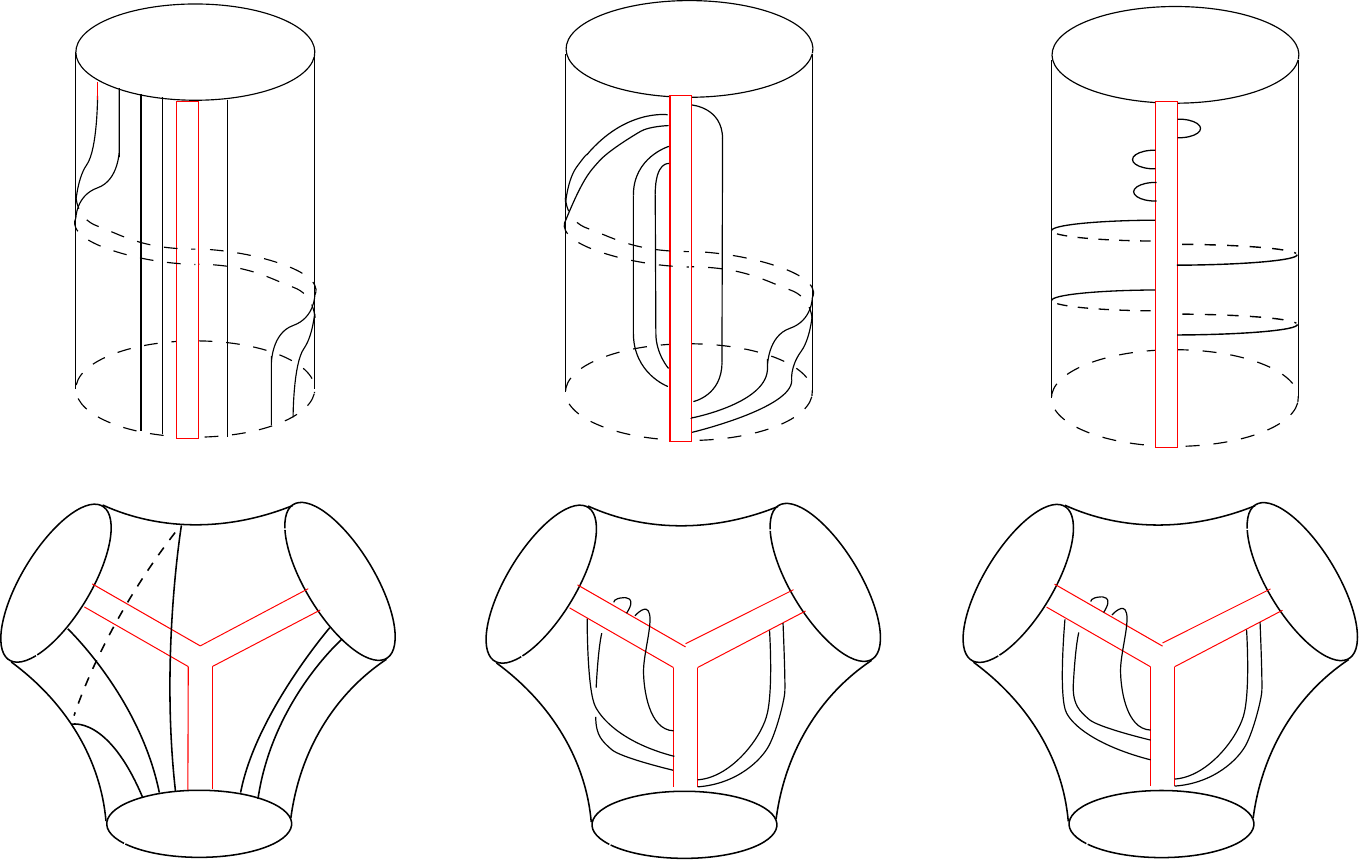
\caption{Patterns of $\gamma$ on  pieces of a decomposition system before (left) and after Step 1 (middle) and  after Step 2 (right).}
\label{fig:remainingpatterns}
\end{figure}

The strategy for computing the coefficients $C(\sigma)$ is as follows:
 \begin{itemize}
 \item [-] \underline{\emph{Step 1:}}
 For each $e\in E$, we start by applying the strand rule at each point of $\gamma \cap \alpha_e$ and $\gamma\cap \alpha_e'$ as well as at each come-back patterns of $\gamma$.
   The left   column of Figure  \ref{fig:remainingpatterns} shows patterns of $\gamma$ on an annulus and pants piece, while  the middle  column shows the resulting  configurations
  at the end of Step 1.
 \vskip 0.05in

  \item [-] \underline{\emph{Step 2:}} Apply the Siding Lemma to simplify the configurations resulting from Step 1.
  This lemma allows to slide and permute endpoints of remnants of $ \gamma$ on the edges of $\Gamma$ and to remove crossings between arcs of $\gamma$ resulting from Step 1.
 The right column of Figure  \ref{fig:remainingpatterns} shows the results of applying this step to the configurations resulting from Step 1.
 
 \vskip 0.05in

  \item [-] \underline{\emph{Step 3:}} 
  Use the half-twist, triangle and bigon  fusion rules, and Lemma \ref{lemma:singleArcAnnuli} 
  on the configurations resulting from Step 2, and collect terms corresponding to the same vectors
 $ \phi_{{\bf c}}$ of the fixed basis
  ${\mathcal B}_P$ to compute $C(\sigma)$.
  \end{itemize}
 
Next we   assign a \emph{weight} to each arc in $B$ that captures the ``cost"  of applying Steps 1-3 above.
Recall that

 \begin{equation}
 \label{triangles}
 \Delta_{++}=\frac{\left(\langle \frac{3\theta}{2}\rangle \langle \frac{\theta}{2}\rangle\right)^{\frac{1}{2}}}{\langle \theta \rangle},\  \Delta_{--}=-\frac{\left(\langle \frac{3\theta}{2}\rangle \langle \frac{\theta}{2}\rangle\right)^{\frac{1}{2}}}{\langle \theta \rangle} \ {\rm and} \  \Delta_{+-}=\Delta_{-+}=\frac{\langle \frac{\theta}{2}\rangle}{\langle \theta\rangle}.
 \end{equation}
 
Given $y\in B$ with endpoints $p_1,p_2$, and a state $\sigma\in \mathcal{S}_{\sigma}$, recall that the values  $s(p_1), s(p_2)$ are in $\{+1, -1\}$.
Slightly abusing notation, we will use $\Delta_{s(p_1),s(p_2)}$ to denote the quantity from Equation \eqref{triangles} for which the choice of the labels $\pm$ agrees with the signs of
$s(p_1),s(p_2)$. For instance, if $s(p_1)=+1$ and $s(p_2)=-1$, then $\Delta_{s(p_1),s(p_2)}=\Delta_{+-}$.
 
 \begin{definition}\label{def:weights} (\emph{Weights on arcs}) Given a state $s$ and an arc $y\in B$ with endpoints $p_1,p_2$,
 we define the \emph{weight} of $y$ with respect to $s$, denoted by  $w(y,s)$, as follows:
 \vskip 0.04in
 \begin{itemize}
\item[-]If $y$ lies on a pants piece of  $P\cup P'$, and  $p_1,p_2$ are on
$\gamma \cap P,$ then $$w(y,s):=s(p_1) s(p_2)\Delta_{s(p_1),s(p_2)}.$$ 
\item[-] If $y$ lies on  a  pants piece and $p_1$ is  on $P\cap \gamma$ while $p_2$ is at an undercrossing of a come-back pattern, then
$$w(y,s):=s(p_1) \Delta_{s(p_1),s(p_2)}.$$
\vskip 0.04in

\item[-] If $y$ lies on  a  pants piece and $p_1$ is  on $P\cap \gamma$ while $p_2$ is at an overcrossing of a come-back pattern, then
 $$w(y,s)= s(p_1) s(p_2)
 z^{2s(p_2)}\Delta_{s(p_1),s(p_2)}, \ \ {\rm where} \ \  z=e^{i\frac{\pi\theta}{2}}.$$
 \vskip 0.04in

\item[-]If $y$ lies on  an annulus piece, then 
$$w(y,s):=\begin{cases} 0,  \ \ \ \ \  \ \ \  \ \   \ \ \  \ \  \ \ \ \ \ \  {\rm if}\  s(p_1)\neq s(p_2), &\\
s(p_1)^{t+1}z^{ts(p_1)}, \ \ \  \   \   {\rm otherwise}.\end{cases}$$
 where $t=t_y$ is the swift number of the arc $y$.
 \end{itemize}
 \end{definition}

\begin{definition}\label{total weight} (\emph{Total weight of a state})With the notation and setting as above, we define the \emph{total weight} of a state $s\in \mathcal{S}_{\sigma}$ by
$W(s):=\underset{y\in B}{\prod}w(y,s).$
\end{definition}

We are now ready to give the state sum expressions for  $C(\sigma)$ promised in the beginning of the subsection.

\begin{proposition}\label{prop:statesum}Let $P\cup P'$, $\gamma$ and $B$  be fixed as above. Let $\phi_a$ be the TQFT vector corresponding to  $P$ with all edges colored by $a$, and let us consider the limit $r\rightarrow \infty, \frac{a}{r}\rightarrow \theta$ where $\theta$ is some fixed real number.
For  a total color shift $\sigma=(\sigma_e | e\in E)$,
let $C(\sigma)$ 
denote the
 limit of the coefficients of $T_r^{\gamma}\phi_a$ along $\phi_{a+\sigma}$.  Then,
$$ C(\sigma)=\varepsilon(\gamma)\underset{s\in \mathcal{S}_{\sigma}}  {\sum} W(s)=   \varepsilon(\gamma)     \underset{s\in \mathcal{S}_{\sigma}} {\sum}\ \  \ \underset{y\in B}{\prod}w(y,s).$$
where $\varepsilon(\gamma)\in \lbrace \pm 1 \rbrace$ depends only on the curve $\gamma.$
\end{proposition}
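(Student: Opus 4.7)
The plan is to implement the three-step strategy outlined between Lemmas \ref{lemma:sliding} and \ref{lemma:singleArcAnnuli} and identify, at each step, where the combinatorial data of a state $s\in\mathcal{S}_{\sigma}$ and the weights $w(y,s)$ enter. Starting from $T_r^{\gamma}\phi_a$, we represent $\gamma$ as the union of arcs in the arc system $B$ and apply the strand fusion rule (top of Figure \ref{fig:fusionRules2}) at every point of $\gamma\cap(P\cup P')$ and at every over/undercrossing of a come-back pattern. Each application replaces a strand of color $a$ sitting along an edge of $\Gamma$ by two parallel strands whose colors differ by $\varepsilon=\pm 1$ from $a$, together with a factor of $1$ (the strand rule coefficient in Figure \ref{fig:fusionRules2} is trivial in the limit). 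The choice of $\varepsilon$ at each point $p\in B_{\rm end}$ is precisely the value $s(p)$, so after Step~1 the operator is expanded as a sum indexed by the collection of all possible sign assignments.

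Next one identifies which sign assignments actually contribute to the coefficient of $\phi_{a+\sigma}$. Along a curve $\alpha_e\in P$, the colour of the edge $e$ of $\Gamma$ after all strand splittings is shifted by $\sum_{p\in\gamma\cap\alpha_e}s(p)$; for the output to lie along $\phi_{a+\sigma}$ this sum must equal $\sigma_e$, and the analogous identity must hold along $\alpha_e'$. This gives condition (a) of Definition \ref{def:states}. A come-back pattern enters and exits the same boundary component of a pants piece, so its overcrossing and undercrossing sit on parallel edges of $\Gamma\cup\Gamma'$ that must both return to the admissible colour configuration locally; this forces $s(p)+s(q)=0$ for the over/undercrossing pair, which is condition (b). Thus the sum reduces to a sum over $\mathcal{S}_{\sigma}$.

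In Step~2, for each $s\in\mathcal{S}_{\sigma}$ we use Lemma \ref{lemma:sliding} to slide and permute the split endpoints along the edges of $\Gamma$ and to remove any crossings that appeared in Step~1 between different strands of $\gamma$. The Sliding Lemma contributes a global sign that depends only on the combinatorics of $\gamma$ (the number of crossings/permutations involved), not on the state $s$; this produces the overall $\varepsilon(\gamma)\in\{\pm 1\}$. After sliding, the remaining configuration on each piece decomposes into the disjoint local pictures indexed by the arcs $y\in B$, so the total coefficient factorises as $\varepsilon(\gamma)\prod_{y\in B}w(y,s)$ provided each local factor $w(y,s)$ is computed correctly.

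Step~3 is the local computation. For an arc $y$ on a pants piece with both endpoints on $P\cap\gamma$, one applies two triangle fusion rules (one at each endpoint), whose coefficients in the limit (third and fourth rows of Figure \ref{fig:fusionRules2}) are exactly $\Delta_{s(p_1),s(p_2)}$; the factor $s(p_1)s(p_2)$ in Definition \ref{def:weights} records the signs that the triangle rule attaches when the two endpoints carry matching or opposite signs. For an arc ending at an undercrossing of a come-back pattern one triangle rule and one bigon-type identification are used, yielding $s(p_1)\Delta_{s(p_1),s(p_2)}$; for an arc ending at an overcrossing, the additional positive/negative half-twist rule (second row of Figure \ref{fig:fusionRules2}) contributes $z^{\pm 2 s(p_2)}$ exactly as in Definition \ref{def:weights}. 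For an arc $y$ on an annulus piece, after Step~1 only a single arc of swift number $t=t_y$ running between $\alpha_e$ and $\alpha_e'$ remains, and Lemma \ref{lemma:singleArcAnnuli} computes its contribution as $s(p_1)^{t+1}z^{ts(p_1)}\delta_{s(p_1),s(p_2)}$. Summing the factorised products over $s\in\mathcal{S}_{\sigma}$ yields the claimed formula.

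The main obstacle will be verifying that the signs from Step~1 (introduced at each split by the strand rule), the sign in the Sliding Lemma, and the signs hidden inside the triangle and half-twist coefficients combine to give \emph{exactly} the sign prefactors in Definition \ref{def:weights}, leaving over only a global $\varepsilon(\gamma)$ independent of $s$. Concretely, one must check that permuting endpoints along each edge of $\Gamma$ so as to match the orthonormal basis ordering induces, per arc, precisely the $s(p_1)s(p_2)$ or $s(p_1)$ factors, while the parity contributions from the repeated use of the Sliding Lemma on the fixed underlying combinatorics of $\gamma$ assemble into a single sign depending only on $\gamma$.
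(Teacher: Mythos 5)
Your overall strategy is exactly the paper's: apply the strand rule at every point of $\gamma\cap(P\cup P')$ and once per come-back pattern so that the surviving terms are indexed by states $s\in\mathcal{S}_{\sigma}$ (your derivation of conditions (a) and (b) of Definition \ref{def:states} is fine), separate the remaining arcs with the Sliding Lemma, whose sign depends only on $\gamma$ and yields $\varepsilon(\gamma)$, and then erase each arc of $B$ with the half-twist, triangle and bigon rules together with Lemma \ref{lemma:singleArcAnnuli}, so that the coefficient of $\phi_{a+\sigma}$ factorises as $\prod_{y\in B}w(y,s)$.

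However, the local bookkeeping in Steps 1 and 3 is not correct as written, and this bookkeeping is precisely the content of the proposition. First, the strand rule is not coefficient $1$ in the limit: its $a\mapsto a-1$ branch carries a sign $-1$, and these signs are exactly what become the prefactors $s(p_1)s(p_2)$ and $s(p_1)$ in Definition \ref{def:weights}; in the paper they are accounted for by a definite convention (the sign of a strand rule at a point of $\gamma\cap(P\cup P')$ is attached to the adjacent pants arc, and the sign at a come-back pattern to its overcrossing arc). Declaring the coefficient trivial and then deferring the sign matching as ``the main obstacle'' leaves the key verification undone. Second, your treatment of come-back patterns is off: both the overcrossing arc and the undercrossing arc are erased by a half-twist rule followed by a triangle rule; a single half-twist contributes only $z^{\pm 1}$ (second row of Figure \ref{fig:fusionRules2}), and the factor $z^{2s(p_2)}$ in the overcrossing weight appears because the two equal half-twist coefficients (one from the overcrossing arc, one from the undercrossing arc) are both moved onto the overcrossing arc. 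Your accounting, with no twist at the undercrossing and a single half-twist producing $z^{\pm 2}$ at the overcrossing, would, followed literally, give the wrong power of $z$ per come-back pattern. Likewise, a pants arc with both endpoints on $\gamma\cap(P\cup P')$ is removed by a single triangle rule, hence contributes one $\Delta$ factor, not two as you state.
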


\begin{proof} Following \cite{RD}, we will view $T_r^{\gamma} \phi_a$ as $\gamma$ layered over  $\Gamma$ with each edge colored by $a$ and we will apply
 the three step process outlined earlier.  
 Recall that Step 1 is to apply
 the strand rule at each intersection point of $\gamma$ with the decomposition system $P\cup P'$, plus once for each come-back pattern, separating the overcrossing and undercrossing of each come-back pattern into two different arcs.
In Steps 2 and 3 we will apply  Lemmas \ref{lemma:sliding} and \ref{lemma:singleArcAnnuli} 
 and fusion rules to delete  black arcs in  the remnants of Step 1. Because  the Sliding Lemma allows to permute the endpoints of the configurations
 resulting from Step 1  along the edges of $\Gamma$, the order with which we  apply the strand rule at come-back arcs
 and for intersection points $\gamma\cap \alpha_e$ or $\gamma \cap \alpha_e'$ is not important. See Figure \ref{fig:remainingpatterns}. 
  
Let us first give an interpretation of states explaining, in particular, how conditions (a)-(b) of  Definition \ref{def:states}
arise in our setting. For each endpoint $p$ of an arc in  $B$ that is an intersection point of $ \gamma\cap (P\cup P')$ the sign $s(p)$ corresponds to the choice of 
$\pm 1$ color shift we used while applying the strand rule at $p$. If $p$ is an overcrossing (resp. undercrossing) of a come-back arc, $s(p)$ is the color shift (resp. $-1$ times the color shift) we used applying the strand rule at that come-back pattern. This produces a function $s: B_{\rm{end}}\longmapsto \lbrace \pm 1 \rbrace$ that  satisfies condition (b) of Definition \ref{def:states}.

Note that for each $s\in \mathcal{S}_{\sigma},$ the remaining pattern after the initial strand rules is a scalar multiple of the vector $\phi_{a+\sigma}$. 
To compute this scalar (up to sign), we have associated a weight to each arc, representing loosely the cost of removing that arc using fusion rules. Note that the strand rules themselves produce signs, which we need to distribute to the remaining arcs. Our convention is, that the sign of a strand rule corresponding to a come-back pattern, will be distributed to the overcrossing end, while the sign for a strand rule at a point $ \gamma\cap (P\cup P')$ will be distributed to the arc that is in a pants piece.

In Step 2, we use Lemma \ref{lemma:sliding} to \textit{separate} all remaining arcs. By that we mean that we want to put all arcs in annuli pieces in different slices of the annuli, and for arcs in pants pieces we want no crossing between two different black arcs (with respect to the projection induced by the dual graph). This process is illustrated in Figure \ref{fig:remainingpatterns}. 

Although the use of Lemma \ref{lemma:sliding} to separate arcs may create extra signs, we note that the total sign  depends only on the number of times we had to use the second part of Lemma \ref{lemma:sliding}, which depends only on $\gamma$ and not the state $s$.

Finally, we computed the coefficients needed to erase the remaining annuli arcs in Lemma \ref{lemma:singleArcAnnuli}; they match the weights of Proposition \ref{prop:statesum}. Now we explain how condition (a) of Definition \ref{def:states} is used in this process: By above discussion, after applying Steps 1-3  we have a function
$s: B_{\rm{end}}\longmapsto \lbrace \pm 1 \rbrace$. Note that if $s$  does not satisfy condition (a) of Definition \ref{def:states}, then there is an edge $e$ of $\Gamma$,
lying on an annulus piece of $P\cup P'$ with boundary consisting  of curves $\alpha_e, \alpha_e'$, such that
$$\underset{p\in \gamma \cap \alpha_e}{\sum} s(p)\neq \underset{p\in \gamma \cap \alpha_e'}{\sum} s(p).$$
Then,  when applying Lemma \ref{lemma:singleArcAnnuli}  to $e$, we will have instances where, and with the notation of the figure in the statement of Lemma \ref{lemma:singleArcAnnuli},
we will have $\epsilon\neq  \mu$
yielding  zero coefficients and zero contribution to $C(\sigma)$. Hence only functions $s$ that satisfy condition (a) of Definition \ref{def:states} contribute to $C(\sigma)$.

To erase an arc in a pants piece with two endpoints in $\gamma\cap(P\cup P'),$ a single triangle rule is needed, which, together with the strand rule signs, gives exactly the weight $w(y,s)$ of Definition \ref{def:weights}. 
Finally, to erase an arc from an overcrossing or undercrossing to an intersection point $\in \gamma\cap(P\cup P')$, we need to apply a half-twist rule, then a triangle rule. The overcrossing and undercrossing of a come-back arc yield exactly the same half-twist coefficient. It will be convenient for us to move both coefficients into the weight of the arc of the overcrossing. This gives the second and third rule of Definition \ref{def:weights}, remembering that we also assign the strand rule sign to overcrossings. 
\end{proof}

\subsection{A Laurent polynomial}The second main result of the section is the following proposition the proof of which  uses
Proposition \ref{prop:statesum}.

\begin{proposition}\label{prop:polynomial} Let the notation and setting be as in Proposition \ref{prop:statesum}. For any
total color shift $\sigma$, there exists  $n,m\in \ZZ$ and a Laurent   polynomial $P_{\sigma}(z)\in {\mathbb Q}[z^{\pm1}]$
such that
$$C(\delta)=\Delta_{++}^n\, \Delta_{+-}^m\,  P_{\sigma}(z),$$
at $z=e^{i\frac{\pi\theta}{2}}.$
Here, $\Delta_{++},\Delta_{+-}$ are as defined in Equation \eqref{triangles}. 
\end{proposition}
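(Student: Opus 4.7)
My plan is to work from the state-sum formula of Proposition~\ref{prop:statesum}, extract a common factor $\Delta_{++}^{n}\Delta_{+-}^{m}$ from every state's contribution by means of a parity-of-states argument, and then use a short trigonometric identity relating $\Delta_{++}$ and $\Delta_{+-}$ to rewrite the remaining sum as a Laurent polynomial in $z$ with rational coefficients. If $\mathcal{S}_\sigma$ is empty then $C(\sigma)=0$ and the statement is trivial, so we may assume it is nonempty.

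First I would rewrite each weight of Definition~\ref{def:weights} in a uniform shape. Using $\Delta_{--}=-\Delta_{++}$ and $\Delta_{-+}=\Delta_{+-}$, every pants arc weight becomes $\eta\,\Delta_{\star\star}\,z^{k}$ with $\eta\in\{\pm1\}$, $k\in\ZZ$ and $\star\star\in\{++,+-\}$, while every annulus arc weight (when nonzero) is of the form $\eta\,z^{k}$. Letting $N_P$ denote the total number of pants arcs in the arc system $B$ (which depends only on $\gamma$ and the decomposition system $P\cup P'$) and
\[
B(s)\;:=\;\#\{y\in B\text{ a pants arc}:\ s(p_{1}^{y})\neq s(p_{2}^{y})\},
\]
one obtains
\[
W(s)\;=\;\Delta_{++}^{\,N_P-B(s)}\,\Delta_{+-}^{\,B(s)}\,R_{s}(z),\qquad R_{s}(z)\in\ZZ[z^{\pm1}].
\]

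The decisive step is to show that the parity of $B(s)$ is independent of $s\in\mathcal{S}_\sigma$. Writing $s(p)=(-1)^{\tau(p)}$ with $\tau(p)\in\{0,1\}$, we have
\[
B(s)\;\equiv\;\sum_{y\text{ pants arc}}\bigl(\tau(p_{1}^{y})+\tau(p_{2}^{y})\bigr)\pmod 2.
\]
Because every point of $\gamma\cap(P\cup P')$ is an endpoint of exactly one pants arc (its other side lies in an annulus piece), and every over/undercrossing of a come-back pattern is an endpoint of exactly one pants arc, the double sum above equals $\sum_{p}\tau(p)$ with $p$ ranging over all pants-arc endpoints. Constraint (a) of Definition~\ref{def:states} gives $\sum_{p\in\gamma\cap\alpha_{e}}\tau(p)=(I(\gamma,\alpha_{e})-\sigma_{e})/2$, and the same identity on $\alpha_{e}'$; summing over $e\in E$ contributes $\sum_{e}(I(\gamma,\alpha_{e})-\sigma_{e})$, which is even since $I(\gamma,\alpha_{e})$ and $\sigma_{e}$ share the same parity. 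Constraint (b) contributes $\tau(p_{o})+\tau(p_{u})=1$ per come-back pair, so
\[
B(s)\;\equiv\;N_{\mathrm{CB}}\pmod 2,
\]
where $N_{\mathrm{CB}}$ is the number of come-back patterns of $\gamma$, manifestly independent of $s$.

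Finally, a direct computation from $\sin(3x)=\sin(x)(1+2\cos 2x)$ at $x=\pi\theta/2$, together with $2\cos\pi\theta=z^{2}+z^{-2}$, yields
\[
\frac{\Delta_{++}^{\,2}}{\Delta_{+-}^{\,2}}\;=\;\frac{\langle 3\theta/2\rangle}{\langle\theta/2\rangle}\;=\;1+z^{2}+z^{-2}.
\]
Setting $m:=\max_{s\in\mathcal{S}_\sigma}B(s)$ and $n:=N_P-m$, the parity statement implies that $m-B(s)$ is a nonnegative even integer for every $s$, hence
\[
\Delta_{++}^{\,N_P-B(s)}\Delta_{+-}^{\,B(s)}\;=\;\Delta_{++}^{\,n}\Delta_{+-}^{\,m}\bigl(1+z^{2}+z^{-2}\bigr)^{(m-B(s))/2}.
\]
Substituting back and invoking Proposition~\ref{prop:statesum} gives the desired factorization $C(\sigma)=\varepsilon(\gamma)\,\Delta_{++}^{\,n}\,\Delta_{+-}^{\,m}\,P_{\sigma}(z)$ with
\[
P_{\sigma}(z)\;:=\;\sum_{s\in\mathcal{S}_\sigma}\bigl(1+z^{2}+z^{-2}\bigr)^{(m-B(s))/2}R_{s}(z)\;\in\;\QQ[z^{\pm1}].
\]

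The only real obstacle is the middle step, the parity argument: although elementary, it hinges on carefully identifying which points are endpoints of pants arcs versus annulus arcs and on combining both constraints (a) and (b) of Definition~\ref{def:states} to obtain the fixed residue $N_{\mathrm{CB}}\pmod 2$.
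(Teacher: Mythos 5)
Your proposal is correct and follows essentially the same route as the paper's proof: a monomial decomposition of each state weight, an invariance statement across states for the triangle-coefficient exponents deduced from constraints (a) and (b) of Definition \ref{def:states}, a maximality normalization, and the identity $\Delta_{++}^2/\Delta_{+-}^2=z^2+1+z^{-2}$. The only (harmless) difference is bookkeeping: you absorb $\Delta_{--}=-\Delta_{++}$ into signs at the outset and thus need only the parity of the number of mixed-sign pants arcs, whereas the paper keeps $\Delta_{--}$ separate and uses constancy of the exact count of $-1$ endpoints of pants arcs.
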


\begin{proof} 
We will deduce this proposition from Proposition \ref{prop:statesum}.
Note that by Definition \ref{def:weights}, for any  $s \in \mathcal{S}_{\sigma},$ the total weight is of the form $$W(s)=\pm \Delta_{++}^{n_1(s)}\, \Delta_{+-}^{n_2(s)}\, \Delta_{--}^{n_3(s)}z^{m(s)},$$

where $\Delta_{++},\Delta_{+-}$ and  $\Delta_{--}$ are as defined in Equation \eqref{triangles}.
Moreover, we claim that $n_1(s)+n_2(s)+n_3(s)=|B_{\textrm{pants}}|,$ where $B_{\textrm{pants}}$ is the set of arcs of $B$ that belong to a pants piece of the decomposition system. Indeed, each triangle coefficient $\Delta_{\varepsilon \mu}$ is created when erasing an arc of $B_{\textrm{pants}}.$

Next we claim that 

\begin{equation}\label{tnumber}
n_2(s)+2n_3(s)=K_{\gamma}+\underset{e \in E}{\sum} \left(I(\alpha_e,\gamma)-\sigma_e\right),
\end{equation}
where $K_{\gamma}$ is the number of come-back patterns of $\gamma$. Hence, in particular $n_2(s)+2n_3(s)$ is independent of the state
 $s\in \mathcal{S}_{\sigma}$.

 To prove the claim, first note that by Definition \ref{def:states},  the triangle rules of Figure \ref{fig:fusionRules2} and by Equation \eqref{triangles}, it follows that
 the number $n_2(s)+2n_3(s)$  is the same as the number of endpoints $p$ of arcs in $B_{\textrm{pants}}$ such that $s(p)=-1$.   
Thus it is enough to argue that this later number is equal to the quantity in the right hand side of Equation \eqref{tnumber}.

Recall that pants pieces contain two types of arcs; come-back patterns and arcs that have their endpoints on two different boundary components of a pair of pants. Consider any state $s\in \mathcal{S}_{\sigma}$.  Because $s$  satisfies condition (b) of Definition \ref{def:states},
exactly one of the two endpoints of any come-back arc has a $-$ sign.
Moreover, for any edge $e\in E$, with total shift $\sigma_e$  at $e$, we have
$$\sigma_e=\underset{p \in \alpha_e \cap \gamma}{\sum}s(p).$$ 
The total number of terms  in the right hand side of above equation,
is the  intersection number $I(\alpha_e,\gamma)$. Hence,
$s$ takes the value $+1$ exactly $\frac{1}{2}\left(I(\alpha_e,\gamma)+\sigma_e\right)$ times on $\alpha_e\cap \gamma$, and the value $-1$ exactly $\frac{1}{2}\left(I(\alpha_e,\gamma)-\sigma_e\right)$ times on $\alpha_e \cap \gamma$.  Finally,  $s$ also takes the value $-1$ exactly $\frac{1}{2}\left(I(\alpha_e,\gamma)-\sigma_e\right)$ times on points of $\alpha_e'\cap \gamma$ by the same reasoning and by condition (a) of Definition \ref{def:states}.

Let us  now pick a state $s_0\in \mathcal{S}_{\sigma}$ such that $n_2(s_0)$ is maximal, and let us set $N_i:=n_i(s_0)$, for $i=1,2, 3$. From the above discussion and some elementary linear algebra, we deduce that one can write for any $s\in\mathcal{S}_{\sigma}:$
$$n_1(s),n_2(s),n_3(s)=N_1+k(s),N_2-2k(s),N_3+k(s)$$
where $k(s)$ is a non-negative integer. Thus for any $s\in\mathcal{S}_{\sigma},$ we have that $\frac{W(s)}{\Delta_{++}^{N_1+N_3}\Delta_{+-}^{N_2}}$ is of the form $\pm z^l \left(\frac{\Delta_{++}^2}{\Delta_{+-}^2}\right)^k$ where $l\in \ZZ$ and $k\geq 0.$ 
We conclude noting that
$$\frac{\Delta_{++}^2}{\Delta_{+-}^2}=\frac{\langle \frac{3\theta}{2}\rangle}{\langle \frac{\theta}{2}\rangle}=\frac{e^{\frac{3i\pi\theta}{2}}-e^{-\frac{3i\pi\theta}{2}}}{e^{\frac{i\pi\theta}{2}}-e^{-\frac{i\pi\theta}{2}}}=e^{i\pi\theta}+1+e^{-i\pi\theta}=z^2+1+z^{-2}.$$
\end{proof}


\section{Producing non vanishing coefficients} \label{sec:five}
In this section we apply the settings and results of
 Section \ref{sec:fusion_computations} 
to prove Theorem \ref{thm:lower}.

\subsection{Proof outline}
Fix a decomposition system $P\cup P'=\{\alpha_e, \alpha'_e \  | \ e\in E\}$ of $\Sigma$, and
suppose that
$\gamma$ is a multicurve in Dehn-Thurston position with respect to it.
As before we have   an arc system $B=B(\gamma)$ for $\gamma$. For any color shift $\sigma$ on $E$ we can define the set of states
${\mathcal S}_{\sigma}$ on $B$ and, as in Proposition \ref{prop:statesum}, the limits $C(\sigma)$ of  the coefficients  $T_r^{\gamma}$.

Let $M(\gamma, P)$ be as in Equation \eqref{eq:maxint} and let $e_0\in E$ such that $M(\gamma, P)=I(\gamma, \alpha_{e_0})$.

For any $\abs{\delta}\leq M((\gamma, P) $ consider the total color shift
$\sigma_{\delta}=(\sigma_e| e\in E)$ given by

\begin{equation}
\label{Qstates}
\sigma_{\delta}=\begin{cases} \delta \ \ \ \ \  \ \ \  \ \  {\rm if}\ \  e=e_0,&\\
I(\gamma, \alpha_{e})\ \ {\rm if} \ \  e\neq e_0.\end{cases},\end{equation}
where $\alpha_e \in P$ is the curve dual to the edge $e\in \Gamma$.
Since, with the items fixed earlier, $\sigma_{\delta}$ varies only as $\delta$ does,
we will use   the notation $C(\delta):=C(\sigma_{\delta})$ and ${\mathcal S}_{\delta}:={\mathcal S}_{\sigma_{\delta}}$.

Theorem \ref{thm:lower} follows immediately from the following.
\begin{theorem}\label{thm:lowerlimit} 
For each   $\abs{\delta}\leq M(\gamma, P)$ with $\delta \equiv M(\gamma, P)(\rm{mod}\ 2)$,  we have $C(\delta)\neq   0$.
\end{theorem}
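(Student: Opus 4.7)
The plan is to establish Theorem \ref{thm:lowerlimit} by showing that the Laurent polynomial $P_\delta(z)$ appearing in Proposition \ref{prop:polynomial} is not identically zero. Since $C(\delta) = \Delta_{++}^n \Delta_{+-}^m P_\delta(z)$ and the $\Delta$-factors are nonzero for generic $\theta \in (0,1)$, non-vanishing of $P_\delta$ as an element of $\mathbb{Q}[z^{\pm 1}]$ implies non-vanishing of $C(\delta)$.

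First I would use the state sum of Proposition \ref{prop:statesum} together with the very special form of $\sigma_\delta$. Because $\sigma_e = I(\gamma, \alpha_e)$ at every edge $e \neq e_0$, condition (a) in Definition \ref{def:states} forces $s(p) = +1$ on every endpoint of $B$ lying on $\alpha_e \cup \alpha'_e$ for $e \neq e_0$. Thus a state $s \in \mathcal{S}_\delta$ is determined by (i) a choice of a size-$(M-\delta)/2$ subset $S^{-} \subset \gamma \cap \alpha_{e_0}$ on which $s = -1$ (which then forces the signs on $\gamma \cap \alpha'_{e_0}$ via the weight-vanishing condition on the $e_0$-annulus arcs), and (ii) a sign $\varepsilon \in \{\pm 1\}$ at the overcrossing of each come-back pattern of $\gamma$.

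Next I would sum out the come-back signs once and for all. An explicit calculation using the four weight formulae of Definition \ref{def:weights} shows that for anchor signs $a,b \in \{\pm 1\}$, the $\varepsilon$-sum of the two come-back weights is
\begin{equation*}
A_{a,b}\, z^{2} + B_{a,b}\, z^{-2},
\end{equation*}
where $A_{a,b}$ takes one of the nonzero values $\pm\Delta_{++}\Delta_{+-}$, $\Delta_{++}^{2}$, or $-\Delta_{+-}^{2}$. In particular the $z^{2}$ coefficient of every come-back factor is nonzero. After this partial summation, $P_\delta(z)$ becomes a sum over the reduced states (those only choosing the subset $S^{-}$), where each reduced state contributes a product of a pants-arc constant, $K_\gamma$ factors of the form $Az^{2} + Bz^{-2}$, and an annulus-arc monomial $\prod_{y} z^{t_y s(p_1)}$.

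The key step is to pick out the top-degree Laurent coefficient of $P_\delta(z)$. The top $z$-degree of the contribution of a reduced state $s$ is at most $2K_\gamma + T(s)$, where $T(s) = \sum_{y \in B_{\mathrm{ann}}} t_y s(p_1)$, and this bound is saturated by taking the $z^{2}$ term from each come-back factor. I would then construct a reduced state $s^{\star}_\delta$ that uniquely maximizes $T(s)$ under the constraint that exactly $(M-\delta)/2$ of the points on $\gamma \cap \alpha_{e_0}$ carry the $-1$ sign: concretely, order the $e_0$-annulus arcs by swift number and assign $s = -1$ to the arcs with the most negative swift numbers, adjusting for ties using a secondary combinatorial invariant. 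The resulting top coefficient of $P_\delta(z)$ is a nonzero product of $A_{a,b}$-leading coefficients, pants-arc triangle values $\Delta_{\pm,\pm}$, and signs inherited from $\varepsilon(\gamma)$.

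The principal obstacle is controlling cancellations when several states tie for the maximum of $T(s)$, which occurs when multiple $e_0$-annulus arcs share the same swift number or when the connectivity of pants pieces introduces symmetries. In such cases I would either break ties by considering the joint pair of top and bottom $z$-degrees of Laurent contributions, or refine $s^\star_\delta$ using the combinatorial data of anchor signs of come-back patterns, so that exactly one reduced state achieves the extremum. Once the maximizer is unique, the sign bookkeeping via $\Delta_{--} = -\Delta_{++}$, Remark \ref{rem:sign}, and the global sign $\varepsilon(\gamma)$ shows that the leading coefficient of $P_\delta(z)$ is a nonzero rational expression in the $\Delta$'s, completing the proof.
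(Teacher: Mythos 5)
Your global strategy coincides with the paper's: reduce Theorem \ref{thm:lowerlimit} to the non-vanishing of the Laurent polynomial $P_{\delta}(z)$ of Proposition \ref{prop:polynomial} and try to exhibit a non-cancelling leading term in the state sum of Proposition \ref{prop:statesum}. However, the execution has a genuine gap in the degree bookkeeping. After the normalization that turns $C(\delta)$ into $\Delta_{++}^n\Delta_{+-}^m P_{\delta}(z)$, the triangle coefficients are not ``constants'': $\Delta_{++}^2/\Delta_{+-}^2=z^2+1+z^{-2}$, so a pants arc whose two endpoints carry equal signs contributes one more unit of $z$-degree than one with opposite signs, and which type occurs depends on the state (in particular your factors $A_{a,b}z^2+B_{a,b}z^{-2}$ have leading coefficients $\Delta_{++}^2$, $\pm\Delta_{++}\Delta_{+-}$, $-\Delta_{+-}^2$ of \emph{different} $z$-degrees, and by Lemma \ref{lem:come-backArcs} the anchor points of exactly these come-back arcs sit on $\alpha_{e_0}$, where the signs are part of the choice). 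Consequently the top degree of a reduced state is not $2K_{\gamma}+\sum_y t_y s(p_1)$; the correct per-arc quantity is the complexity $\tau_y=t_y-u_y$ of Definition \ref{def:complexity} (swift number corrected by the adjacent undercrossing endpoints), as in Equation \eqref{eq:degCase2}. Choosing the $-1$'s by ``most negative swift number'' therefore does not even single out the maximal-degree states.

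The second, more structural, problem is the reliance on a \emph{unique} maximizer. The degree of a state's contribution depends only on the multiset of complexities of the points of $\gamma\cap\alpha_{e_0}$ assigned $-1$, so whenever several points share the minimal complexity there are $\binom{|T_1|}{l}$-many states of maximal degree, and no tie-breaking by top/bottom $z$-degrees or by come-back anchor data can separate them. What is actually needed — and what the paper proves in Lemma \ref{lem:maxdegree} — is sign coherence: every maximal state contributes with the same sign $\prod_{y:\,\epsilon_y=-1}(-1)^{\tau_y+1}$, so the leading terms add rather than cancel. Finally, your picture (one annulus arc joined to two pants arcs in distinct pants pieces) implicitly assumes $e_0$ is not a one-edge loop; the loop case, where the arcs chain around a one-holed torus and no explicit description of maximal states is available, requires the separate parity argument $\sign(W(s))=(-1)^{\deg(W(s))+B'}$ of Lemma \ref{lem:maxdegreeCase2}, which your proposal does not address.
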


We begin by noting that the fact that $M(\gamma, P)=I(\gamma, \alpha_{e_0})$ puts restrictions on the types of arcs of $\gamma$
allowed on the pieces of $P\cup P'$ bordering $ \alpha_{e_0}$ or the parallel copy  $\alpha'_{e_0}$.

\begin{lemma}\label{lem:come-backArcs} Suppose that  $\gamma$ is in Dehn-Thurston position with respect to $P\cup P'$ and that $M(\gamma, P)=I(\gamma, \alpha_{e_0})$, for some
$e_0\in E$. Then, we have the following:
\begin{itemize}
\item [(a)] Suppose $ \alpha_{e_0}$ and $\alpha'_{e_0}$ belong on two different pants pieces of the decomposition. Then,  any
 come-back pattern of $\gamma$ on the pants containing $\alpha_{e_0}$ (or $\alpha'_{e_0}$ ) must have their
 endpoints on $\alpha_{e_0}$ (or $\alpha'_{e_0}$ ).
 \item [(b)] Suppose $ \alpha_{e_0}$ and $\alpha'_{e_0}$ belong on a single pants piece of the decomposition. Then there is no come-back pattern of $\gamma$ on this pants piece.
\end{itemize}
\end{lemma}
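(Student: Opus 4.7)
The plan is to reduce both parts to the elementary combinatorics of arcs in a pair of pants, using the fact that $I(\gamma,\alpha_{e_0})$ is maximal. The key fact I will use is the standard pants lemma: if $X$ is a pair of pants with boundary components $c_1,c_2,c_3$ and $N_i$ denotes the number of endpoints of arcs of $\gamma\cap X$ on $c_i$, then (after isotoping to minimize intersections) come-back arcs on $c_i$ exist if and only if $N_i>N_j+N_k$, and in that case their number is $(N_i-N_j-N_k)/2$. I will also use the observation that for any $\beta\in P\cup P'$ that bounds one pants piece and one annulus piece of $P\cup P'$, the number $N(\beta)$ of endpoints of $\gamma\cap X$ on $\beta$ equals $I(\gamma,\beta)$, since the Dehn--Thurston position dictates that every intersection of $\gamma$ with $\beta$ contributes one endpoint on each side. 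Moreover, because $\alpha_e$ and $\alpha_e'$ cobound an annulus piece through which $\gamma$ passes as parallel arcs, we have $I(\gamma,\alpha_e)=I(\gamma,\alpha_e')$ for every $e\in E$.

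For part (a), I would note that the pants piece $X$ containing $\alpha_{e_0}$ has three boundary components: $\alpha_{e_0}$ together with two further curves $\beta_1,\beta_2\in P\cup P'$, each of the form $\alpha_f$ or $\alpha_f'$ for some $f\in E\setminus\{e_0\}$, and each bordering $X$ on one side and an annulus on the other. Suppose, for contradiction, that a come-back pattern of $\gamma$ on $X$ has its endpoints on $\beta_1$ (the case $\beta_2$ being symmetric). By the pants lemma, this forces
\[
N(\beta_1)>N(\alpha_{e_0})+N(\beta_2)\geq N(\alpha_{e_0}).
\]
Translating $N$ back into geometric intersection numbers, this reads $I(\gamma,\beta_1)>I(\gamma,\alpha_{e_0})=M(\gamma,P)$, contradicting the definition of $M(\gamma,P)$ as the maximum. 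So any come-back pattern on $X$ has both endpoints on $\alpha_{e_0}$, and the analogous argument applies to the pants piece containing $\alpha_{e_0}'$.

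For part (b), the pants piece $X$ has three boundary components $\alpha_{e_0}$, $\alpha_{e_0}'$ and some third curve $\beta\in P\cup P'$ (necessarily not equal to $\alpha_{e_0}$ or $\alpha_{e_0}'$, since a pair of pants cannot have three boundary circles identified to a single two-sided curve). By the observations above,
\[
N(\alpha_{e_0})=N(\alpha_{e_0}')=I(\gamma,\alpha_{e_0})=M(\gamma,P),\qquad N(\beta)=I(\gamma,\beta)\leq M(\gamma,P).
\]
I now rule out come-back arcs on each of the three boundary components. A come-back on $\beta$ would require $N(\beta)>N(\alpha_{e_0})+N(\alpha_{e_0}')=2M(\gamma,P)\geq 2N(\beta)$, impossible unless $N(\beta)=0$, in which case there are no arcs on $\beta$ at all. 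A come-back on $\alpha_{e_0}$ would require $M(\gamma,P)=N(\alpha_{e_0})>N(\alpha_{e_0}')+N(\beta)=M(\gamma,P)+N(\beta)$, forcing $N(\beta)<0$, which is absurd; the same argument rules out come-backs on $\alpha_{e_0}'$. Thus no come-back pattern exists on $X$, proving part (b).

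I do not anticipate a real obstacle: the whole argument is a direct application of the pants inequality to the pants pieces incident to $\alpha_{e_0}$, exploiting that $I(\gamma,\alpha_{e_0})$ is the maximum among all curves of $P$. The only point to be careful about is the bookkeeping between intersection numbers with $\alpha_e$ versus $\alpha_e'$, which is handled by Dehn--Thurston position, and verifying that in case (b) the third boundary of $X$ is genuinely a distinct curve, which follows from orientability of $\Sigma$ and the fact that a two-sided simple closed curve cannot appear as three boundary circles of a single pants piece.
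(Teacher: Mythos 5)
Your proof is correct and follows essentially the same route as the paper's: both rest on the standard pants fact that come-back arcs on a boundary component exist exactly when its endpoint count exceeds the sum of the other two, combined with the maximality of $I(\gamma,\alpha_{e_0})$ (and the Dehn--Thurston bookkeeping $N(\alpha_e)=N(\alpha_e')=I(\gamma,\alpha_e)$). The only cosmetic quibble is in part (b): the caveat ``impossible unless $N(\beta)=0$'' is unnecessary, since the strict inequality $N(\beta)>2N(\beta)$ already fails when $N(\beta)=0$, but this does not affect the argument.
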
 
\begin{proof}

We observe that if a pants piece of $P\cup P'$ has boundary curves $\alpha_e,\alpha_f,\alpha_g,$ then there is one (or more) come-back arcs with two endpoints on $\alpha_e$
 if and only if $$I(\alpha_e,\gamma)>I(\alpha_f,\gamma)+I(\alpha_g,\gamma).$$ This inequality  is an immediate consequence of the fact that $\gamma$ is into Dehn-Thurston position and it  implies the first part (a) of the lemma.

Suppose now that  $\alpha_{e_0}$ and $\alpha'_{e_0}$ are boundary curves on a single pants piece and let  $\alpha_f$ denote the third boundary curve of the pants.
The hypothesis  $M(\gamma, P)=I(\gamma, \alpha_{e_0})$,  implies  that there are no come-back arc in that pants with two endpoints on $\alpha_{f}.$
Since, by the discussion above,  $$I(\alpha_{e_0},\gamma)\leq I(\alpha_{e_0},\gamma) +I(\alpha_f,\gamma),$$ there is also no come-back arc in that pants with two endpoints on $\alpha_{e_0}$ or  $\alpha'_{e_0}$.
\end{proof}

\vskip 0.05in

Given $C(\delta)$ as in the statement of Theorem \ref{thm:lowerlimit}, by Proposition \ref{prop:polynomial} there is a Laurent polynomial $P_{\delta}(z)\in {\mathbb Q}[z^{\pm1}]$
so that
$$C(\delta)=\Delta_{++}^n\, \Delta_{+-}^m\,  P_{\delta}(e^{i\frac{\pi\theta}{2}}),$$
for some $m,n\in \ZZ$ and $z=e^{i\frac{\pi\theta}{2}}.$
We will show the following:

 \begin{proposition}\label{prop:nozeropoly}For each   $\abs{\delta}\leq M(\gamma, P)$ with $\delta \equiv M(\gamma, P)(\rm{mod}\ 2)$, 
 we have $P_{\delta}(z)\neq 0$ in ${\mathbb Q}[z^{\pm1}]$.
 \end{proposition}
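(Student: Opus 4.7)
The plan is to exhibit a nonzero Laurent monomial in $P_\delta(z)$ by combining the state-sum formula of Proposition~\ref{prop:statesum} with the sign rigidity forced by $\sigma_\delta$. First I would reduce to a local analysis near $\alpha_{e_0}$: since $\sigma_e = I(\gamma,\alpha_e)$ for every $e \neq e_0$, the constraint in Definition~\ref{def:states}(a) is saturated and therefore forces $s(p) = +1$ for every endpoint on $\alpha_e \cup \alpha'_e$ with $e \neq e_0$. By Lemma~\ref{lem:come-backArcs}, the only come-back patterns whose over/undercrossing sign choices remain variable lie in the pants pieces adjacent to $\alpha_{e_0}$. Consequently $W(s)$ factors as a state-independent background factor (from arcs far from $\alpha_{e_0}$) times a local factor depending only on the signs on $\gamma \cap \alpha_{e_0}$, $\gamma \cap \alpha'_{e_0}$, and on the over/under choices on the come-back arcs at $\alpha_{e_0}$.

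In the simplest case, when $\gamma$ has no come-back arcs adjacent to $\alpha_{e_0}$, the local factor is a product over the $I := I(\gamma,\alpha_{e_0})$ intersection points, and summing the weights over sign configurations $(s_i) \in \{\pm 1\}^I$ with $\sum_i s_i = \delta$ telescopes into an elementary symmetric polynomial identity. One obtains
\[
P_\delta(z) \;=\; \varepsilon(\gamma)\, z^{\tau}\, e_{(I-\delta)/2}\bigl(u_1, \ldots, u_I\bigr), \qquad u_i := (-1)^{t_i+1} z^{-2 t_i},
\]
where $\tau \in \mathbb{Z}$ and $t_i$ is the swift number of the $i$-th annulus arc in the $\alpha_{e_0}$-annulus. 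Evaluating at $z = i$ yields $u_i = (-1)^{t_i+1}(-1)^{-t_i} = -1$ for every $i$, and the symmetric polynomial collapses to $\binom{I}{(I-\delta)/2}(-1)^{(I-\delta)/2}$, which is manifestly nonzero because the parity hypothesis $\delta \equiv M(\gamma,P) \pmod{2}$ makes $(I-\delta)/2$ a non-negative integer no larger than $I$.

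To cover the general case I would sum over the over/undercrossing $\pm$ choices for each come-back arc before combining with the boundary-sign sum. A direct calculation, using the weight conventions in Definition~\ref{def:weights}, shows that for fixed boundary signs $(s_a, s_b)$ the local contribution of a single come-back arc is a Laurent polynomial of the form $\Delta_{++}\Delta_{+-}(z^2 - z^{-2})$ when $s_a = s_b$ and $\pm(z^2 \Delta_{++}^2 + z^{-2}\Delta_{+-}^2)$ when $s_a \neq s_b$, each nonzero in $\mathbb{Q}[z^{\pm 1}]$ after the $\Delta$-factoring of Proposition~\ref{prop:polynomial}. Multiplying these factors and combining with the no-come-back formula yields $P_\delta(z)$. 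The main obstacle is that when $\delta$ is close to $M(\gamma,P)$ the boundary signs at come-back arcs are forced toward $(+,+)$, and the factor $z^2 - z^{-2}$ vanishes at $z = i$, so the plain $z=i$ evaluation can fail. In that regime the plan is to switch to a top-$z$-degree analysis: writing $P_\delta(z) = \sum_s \epsilon(s) z^{l(s)}(z^2+1+z^{-2})^{k(s)}$, I would identify a state $s^\ast \in \mathcal{S}_{\delta}$ maximizing $l(s) + 2k(s)$ by aligning each annulus-arc sign with $\mathrm{sign}(t_y)$, setting overcrossings to $+1$, and distributing the $(I-\delta)/2$ negative signs on $\gamma \cap \alpha_{e_0}$ to come-back boundary endpoints so as to maximize $n_3(s)$. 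The hardest step will be verifying the strict optimality of $s^\ast$: I would show that any admissible sign flip, when its effect on annulus weights, on the triangle type contributing to $n_3$, and on the signs produced by Lemma~\ref{lemma:sliding} are accounted for, strictly decreases $l(s) + 2k(s)$, so that the coefficient of $z^{l(s^\ast) + 2k(s^\ast)}$ in $P_\delta(z)$ is the nonzero sign $\epsilon(s^\ast)$.
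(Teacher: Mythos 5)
Your reduction to a local analysis near $\alpha_{e_0}$, the factorization of $W(s)$ into a background times a local factor, and the evaluation at $z=i$ in the no-come-back case are sound (and the $z=i$ trick is a nice shortcut there); your local come-back computations $\pm\Delta_{++}\Delta_{+-}(z^2-z^{-2})$ and $\pm(z^2\Delta_{++}^2+z^{-2}\Delta_{+-}^2)$ are also correct. But the fallback argument that is supposed to cover the general case has a genuine gap: you assume the state $s^\ast$ maximizing $l(s)+2k(s)$ is \emph{strictly} optimal, so that the top coefficient of $P_\delta(z)$ is the single sign $\epsilon(s^\ast)$. This uniqueness is false in general. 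The degree of $W(s)$ for states in $\mathcal{S}^+_\delta$ has the form $B_\gamma+\sum_{p\in\gamma\cap\alpha_{e_0}}s(p)\tau(p)$ for a complexity function $\tau$ (swift number corrected by undercrossings), and whenever several intersection points share the same complexity --- e.g.\ two parallel strands through the $e_0$-annulus with equal swift numbers and the same attachment type --- any redistribution of the $(I-\delta)/2$ negative signs among them gives distinct states of the \emph{same} top degree. (The paper's Definition~\ref{def:maxstates} and the remark after it explicitly exhibit $\binom{|T_1|}{l}$ such maximal states.) So the real content of the proof is not optimality of one state but a no-cancellation statement: all maximal-degree states contribute with the same sign, which requires the sign computation $\mathrm{sign}(W_{e_0}(s))=\prod_{y,\,\epsilon_y=-1}(-1)^{\tau_y+1}$ (or, in the loop case, $\mathrm{sign}(W(s))=(-1)^{\deg(W(s))+B'}$). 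Your plan asserts the conclusion of this step without providing it, and "any admissible sign flip strictly decreases the degree" cannot be proved because it is not true.

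Two further points. First, the case where $e_0$ is a one-edge loop (the one-holed torus configuration) is not covered by your scheme: there the two pants-arc endpoints of a strand are not forced to $+1$, consecutive annulus passes of a single component of $\gamma$ are coupled through triangle coefficients (so $\Delta_{--}$ enters and the per-intersection-point factorization you use breaks down), and no explicit description of maximal states is available --- the paper needs the separate argument of Lemma~\ref{lem:maxdegreeCase2}, which again rests on a sign-coherence identity rather than uniqueness. Second, your claim that only come-back patterns adjacent to $\alpha_{e_0}$ carry variable sign choices is inaccurate: the over/undercrossing choice is a free binary choice at \emph{every} come-back pattern. Summing the far-away ones into the background produces factors $\pm\Delta_{++}\Delta_{+-}(z^2-z^{-2})$, which are nonzero as Laurent polynomials but vanish at $z=i$, so even in your "simplest case" the global $z=i$ evaluation only works after you have explicitly factored these out (or handled them by the degree argument, as the paper does via Lemma~\ref{lemma:plusatover}). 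This last point is fixable, but the missing no-cancellation argument among maximal states, and the missing loop-edge case, are essential steps of the proof that your proposal does not supply.
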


Having Proposition \ref{prop:nozeropoly} at hand, Theorem \ref{thm:lowerlimit} is derived as follows: If $C(\delta)$ is zero then $P_{\delta}(e^{i\frac{\pi\theta}{2}})=0$.
Since we can choose $\theta$ so that $e^{i\frac{\pi\theta}{2}}$ is a transcendental number we arrive at contradiction.

The remaining of the section will be devoted to the proof of Proposition \ref{prop:nozeropoly}.

\subsection{Two lemmas and set up} We will use $\deg_z(P_{\delta}(z))$ to denote the highest power of $z$ in $P_{\delta}(z)$.
By Proposition \ref{prop:statesum} we have

$$ C(\delta)=\varepsilon(\gamma)\underset{s\in \mathcal{S}_{\delta}}  {\sum} W(s),$$
where
$W(s)= \underset{y\in B}{\prod}w(y,s),$ $\varepsilon(\gamma)\in \lbrace \pm 1 \rbrace$
and the arc weights are given in Definition 
\ref{def:weights}.

Note that for any state
the  total weight $W(s)$ is of the form
$$W(s)=\pm \Delta_{++}^{n_1(s)}\, \Delta_{+-}^{n_2(s)}\,  z^{m_s},$$
where $m_s\in \ZZ$. The proof of Proposition \ref{prop:polynomial} shows that after multiplying all weights by a common non-zero factor $\Delta_{++}^N\, \Delta_{+-}^M,$ we get monomials $P(s)$ with $\deg_z(P(s))=m_s+n_1(s)$ up to a constant that does not depend on $s.$ Therefore we will set 
$$\deg_z(W(s)):=m_s+n_1(s)$$
or equivalently
\begin{equation}\label{eq:degTriangCoeff}\deg_z(\Delta_{\varepsilon, \mu})=\delta_{\varepsilon,\mu}=\frac{1+\varepsilon\mu}{2}
\end{equation}
where $\delta$ is the Kronecker symbol and the second equality holds since we consider $\varepsilon,\mu\in \lbrace \pm 1 \rbrace.$ Also, considering the sign of the coefficient of highest degree in $z$ after renormalization, we will set:
\begin{equation}\label{eq:signTriangCoeff}\sign(\Delta_{++})=\sign(\Delta_{+-})=+1, \ \textrm{and} \ \sign(\Delta_{--})=-1.
\end{equation}

To prove Proposition \ref{prop:nozeropoly} for each $\delta\leq \abs{M(\gamma, P)}$, we will determine all the states in ${\mathcal S}_{\delta}$ that contribute to $\deg_z(P_{\delta}(z))$ 
and show that there is no cancellation between the total weights corresponding to these states. From the above, we can look for states with $\deg_z(W(s))$ maximal, and show that the corresponding terms do not cancel out.

We begin with two lemmas that describe some  properties of  the states in ${\mathcal S}_{\delta}$.

\begin{lemma}\label{lemma:plusatover}Suppose that $\abs{\gamma \cap \alpha_{e_0}}>1$. Let $s, s'$ be two states in ${\mathcal S}_{\delta}$ such that
there are $p_2,q_2\in B_{\rm{end}}$ corresponding to the overcrossing and undercrossing of the same come-back arc of $\gamma$, respectively,
so that $s(p_2)=+1$ and $s(q_2)=-1$ while $s'(p_2)=-1$ and $s'(q_2)=+1$. Suppose, moreover, that for any $q\ne p_2, q_2$, we have $s(q)=s'(q)$. 
 Then, we have  $\deg_z(W(s))< \deg_z(W(s'))$.
\end{lemma}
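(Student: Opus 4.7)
The plan is to prove this by a direct weight computation using Proposition \ref{prop:statesum}, Definition \ref{def:weights}, and the degree convention \eqref{eq:degTriangCoeff}. Reading the hypothesis as $(s(p_2),s(q_2))=(-1,+1)$ and $(s'(p_2),s'(q_2))=(+1,-1)$ (which is the content of ``$+$ at the overcrossing'' for $s'$), both $s$ and $s'$ satisfy the constraints of Definition \ref{def:states}, since swapping signs at the two endpoints of a single come-back pattern changes neither any sum $\underset{p\in \gamma\cap\alpha_e}{\sum} s(p)$ nor $\underset{p\in\gamma\cap\alpha'_e}{\sum}s(p)$, while preserving $s(p_2)+s(q_2)=0$. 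Hence both belong to $\mathcal{S}_{\sigma_\delta}$, and the arc weights $w(y,s)$ and $w(y,s')$ coincide for every $y\in B$ except the two arcs incident to $p_2, q_2$, namely the arc $y_1$ in the pants piece with endpoints $p_1\in\gamma\cap P$ and $p_2$ (at the overcrossing), and the arc $y_2$ with endpoints $p_1'\in\gamma\cap P$ and $q_2$ (at the undercrossing). In particular, annulus-arc weights are unaffected, since by Definition \ref{def:weights} they depend only on the signs at their own annulus endpoints.

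By Definition \ref{def:weights} and equation \eqref{eq:degTriangCoeff},
$$\deg_z w(y_1, s) \;=\; 2\, s(p_2) + \tfrac{1 + s(p_1)\, s(p_2)}{2}, \qquad \deg_z w(y_2, s) \;=\; \tfrac{1 + s(p_1')\, s(q_2)}{2},$$
the asymmetry between $y_1$ and $y_2$ being precisely the half-twist factor $z^{2 s(p_2)}$ on the overcrossing side. Writing $\varepsilon := s(p_1)=s'(p_1)$ and $\varepsilon' := s(p_1')=s'(p_1')$, a short substitution gives
$$\deg_z W(s) - \deg_z W(s') \;=\; \Bigl(-2+\tfrac{1-\varepsilon}{2}+\tfrac{1+\varepsilon'}{2}\Bigr) - \Bigl(2+\tfrac{1+\varepsilon}{2}+\tfrac{1-\varepsilon'}{2}\Bigr) \;=\; -4 - \varepsilon + \varepsilon',$$
which, since $\varepsilon,\varepsilon'\in\{\pm 1\}$, lies in $\{-6,-4,-2\}$ and is in particular strictly negative. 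This yields $\deg_z W(s) < \deg_z W(s')$, as asserted.

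There is no serious obstacle here: the argument is essentially bookkeeping against Definition \ref{def:weights}. The one point worth checking is that every arc of $B$ touching $p_2$ or $q_2$ is of the form used above, i.e.\ that no arc of $B$ has \emph{both} endpoints at crossings of the same come-back pattern; this is immediate from the classification of arcs of $B$ given before Definition \ref{def:states}. The hypothesis $|\gamma\cap\alpha_{e_0}|>1$ is not used in the computation itself, but is retained because it is the regime in which the lemma will be applied in the proof of Proposition \ref{prop:nozeropoly}.
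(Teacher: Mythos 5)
Your proof is correct and takes essentially the same route as the paper's: you isolate the two pants-piece arcs meeting the overcrossing and the undercrossing of the come-back pattern, and your degree difference $-4-\varepsilon+\varepsilon'\leq -2$ is exactly the paper's estimate $\deg_z\big(w(y,s)w(y',s)\big)\leq \deg_z\big(w(y,s')w(y',s')\big)-2$, with your reading of the (garbled) hypothesis agreeing with the way the lemma is later applied. The extra checks you include (both assignments are states, annulus arcs are unaffected, every arc touching a crossing has its other endpoint on $P\cap\gamma$) are points the paper leaves implicit, so nothing further is needed.
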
 
\begin{proof}
Suppose that $p_2$ and $q_2$ belong  to arcs $y$ and $y'$ in $ B$, respectively. Let $p_1, q_1 \in B_{\rm {end}}$ denote the  second endpoints of $y$ and $y',$ respectively.

Suppose that $s(p_1)=s'(p_1)=\mu$ and  $s(q_1)=s'(q_1)=\epsilon$, where $\epsilon, \mu \in \{\pm 1\}$.

By assumption,
 the only difference between $W(s)$ and $W(s')$ is the product of weights on the arcs $y, y'$, where
we have $$w(y,s)\,  w(y',s)=\epsilon \mu z^{2} \, \Delta_{+\mu} \Delta_{-\epsilon}, \ \ {\rm while} \ \
w(y,s')\,  w(y',s')=-\epsilon \mu z^{-2}\, \Delta_{-\mu} \Delta_{+\epsilon}.$$
By Equation \eqref{eq:degTriangCoeff}, we get:
$$\deg_z(w(y,s)\,  w(y',s))=3+\frac{\mu-\varepsilon}{2}, \ \ {\rm while} \ \
\deg_z(w(y,s')\,  w(y',s'))=-1-\left(\frac{\mu-\varepsilon}{2}\right)$$
Since $\mu,\varepsilon\in \lbrace \pm 1 \rbrace,$ we get:
$$\deg_z\big(w(y,s')\,  w(y',s')\big) \leq \deg_z\big(w(y,s)\, w(y',s)\big) -2.$$
and the lemma follows.
\end{proof}

By Lemma \ref{lemma:plusatover}, while looking to identify the states that contribute to the maximum degree of $P_{\delta}(z)$,
it is enough to look at states that assign the value $+1$ to all points in $B_{\rm{end}}$ that correspond to overcrossings of some come-back pattern of $\gamma$.

With this in mind, we define
$ {\mathcal S}^{+}_{\delta}$ to be the set of all 
$s\in {\mathcal S}_{\delta}$ such that we have $s(p)=+1$ for any point $p\in B_{\rm{end}}$ that is an overcrossing of a come-back pattern of $\gamma$. 
Hence, by Part (2) of Definition \ref{def:states},
 for any $q$ that is an undercrossing of a come-back pattern of $\gamma$ we have $s(q)=-1$.

\begin{lemma}\label{properties}We have the following:
\begin{enumerate}
\item[(a)] Let $s\in{\mathcal S}_{\delta}$. For  any edge $e\neq e_0$ of $\Gamma$ and any $p\in (\gamma\cap \alpha_e)$,
we have $s(p)=1$. 

\item[(b)] For any two states $s_1, s_2\in{\mathcal S}^{+}_{\delta}$ and any arc $y\in B$ that has none of its endpoints on  $\alpha_{e_0}\cup \alpha'_{e_0}$,
we have $w(y, s_1)=w(y, s_2)$.   
\end{enumerate}
\end{lemma}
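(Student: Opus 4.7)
The plan is to deduce (a) by a simple counting argument from the definition of a state, and then to use (a) together with the defining property of $\mathcal{S}^+_\delta$ to show that for arcs missing $\alpha_{e_0}\cup\alpha'_{e_0}$ every endpoint value is forced, making the weight independent of the state.

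For (a), I would fix $e\neq e_0$ and invoke condition (a) of Definition \ref{def:states}: we have $\sum_{p\in \gamma\cap\alpha_e} s(p) = \sigma_e$. By \eqref{Qstates}, $\sigma_e = I(\gamma,\alpha_e)$, while the number of summands equals $|\gamma\cap\alpha_e| = I(\gamma,\alpha_e)$. Since each $s(p)\in\{\pm 1\}$, the only way a sum of $I(\gamma,\alpha_e)$ terms of the form $\pm 1$ can equal $I(\gamma,\alpha_e)$ is for every $s(p)$ to be $+1$. The same argument applied to $\alpha'_e$ handles endpoints on the parallel copy.

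For (b), I would treat the three kinds of arcs in $B$ separately. If $y$ lies on a pants piece with both endpoints on $P\cap\gamma$, the hypothesis forces both endpoints onto some $\alpha_e$ or $\alpha'_e$ with $e\neq e_0$, so (a) gives $s(p_1)=s(p_2)=+1$ and the weight is $\Delta_{++}$, independent of $s$. If $y$ lies on an annulus piece, the annulus must correspond to some $e\neq e_0$ and again both endpoints receive the value $+1$ by (a), yielding weight $z^{t_y}$. If $y$ lies on a pants piece and connects a point of $P\cap\gamma$ to an overcrossing or undercrossing of a come-back arc, then (a) fixes the value at the $P\cap\gamma$ endpoint to $+1$, while the defining condition of $\mathcal{S}^+_\delta$ fixes $s=+1$ at the overcrossing and $s=-1$ at the undercrossing; in each case the formulas of Definition \ref{def:weights} give a weight (namely $\Delta_{+-}$ for an undercrossing endpoint and $z^2\Delta_{++}$ for an overcrossing endpoint) depending only on the geometry of $y$.

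There is no deep obstruction in the argument. The only point requiring care is to observe that an overcrossing or undercrossing of a come-back arc is a point interior to a pants piece, not on $P\cup P'$, so it automatically avoids $\alpha_{e_0}\cup\alpha'_{e_0}$; hence the avoidance hypothesis in (b) only constrains the endpoint of $y$ lying on $P\cap\gamma$, and (a) does the rest.
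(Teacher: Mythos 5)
Your proposal is correct and follows essentially the same route as the paper: part (a) is the counting argument from the state condition \eqref{sumcurve} together with $\sigma_e=I(\gamma,\alpha_e)$ from \eqref{Qstates}, and part (b) follows by combining (a) (and its analogue for $\alpha'_e$) with the defining property of $\mathcal{S}^{+}_{\delta}$ and Definition \ref{def:weights}, checking each of the three arc types. The paper's proof is just a terser version of exactly this argument, and your explicit case-by-case verification of the weights ($\Delta_{++}$, $z^{t_y}$, $\Delta_{+-}$, $z^2\Delta_{++}$) fills in the details correctly.
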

\begin{proof}Part (a) follows from Equation \eqref{sumcurve} in Definition \ref{def:states}, and the fact that, by the Equation \eqref{Qstates}  we have
$\sigma_e=I(\gamma, \alpha_{e})$. Part (b) follows from (a) and Definition \ref{def:weights}.
\end{proof}

\begin{figure}[h]
\centering
\def \svgwidth{.8\columnwidth}
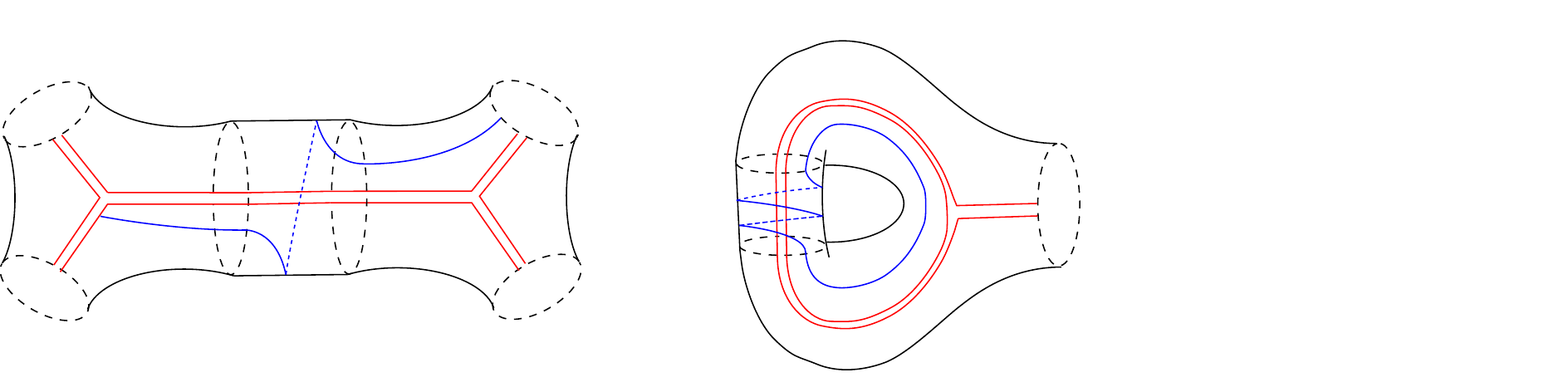
\caption{Arcs of $\gamma$ and the edge $e_0$ in $\Sigma$.}
\label{fig:ArcCases}
\end{figure}

To continue let $B_{e_0}$ denote the subset of the arcs system  $B$ consisting of arcs that have at least one endpoint on $\alpha_{e_0}\cup\alpha'_{e_0}$. 
An arc $y\in B_{e_0}$  can have both endpoints on $\alpha_{e_0}\cup\alpha'_{e_0}$ and it lies on the annulus  piece of the decomposition system that is dual to the edge $e_{0}$,
or can have only one endpoint on $\alpha_{e_0}\cup\alpha'_{e_0}$ and it lies on a pants piece adjacent to the annulus.
The arcs $y\in B_{e_0}$ with both endpoints on $\alpha_{e_0}\cup\alpha'_{e_0}$ are exactly the arcs for which we have $y\cap \alpha_{e_0}=y\cap \alpha'_{e_0}\neq \emptyset$.

Next we will summarize  the properties of these arcs that are useful to us. There are two cases to consider according the whether
the edge $e_0$ connects two vertices of $\Gamma$ or not. See Figure \ref{fig:ArcCases}.
\begin{itemize}

\item [(I)]The edge $e_0$ is a loop in $\Gamma$ and lies on an one-holed torus of $\Sigma$. The intersection of $\gamma$ with the one-holed torus component  consists of either simple closed curves included in the one-holed torus, or arcs that start and end on the boundary component $\alpha_f$ of the one-holed torus. In the first case, we may decompose the curve into arcs $y_1,z_1,\ldots,y_p,z_p,$ where $y_i$ lies inside the pants delimited by $\alpha_{e_0},\alpha_{e_0}'$ and $\alpha_f,$ while $z_i$ lies in the annulus piece bounded by $\alpha_{e_0}$ and $\alpha_{e_0}'.$ In the second case we decompose the arc into arcs $y_0,z_1,y_1,\ldots y_{p-1},z_p,y_p$ similarly defined except $y_0$ and $y_p$ have an extremity on $\alpha_f.$

\item[(II)] The edge $e_{0}$ connects two distinct vertices of $\Gamma$ corresponding to two distinct pants pieces of the decomposition system.
The endpoints of an arc $y$ that lies on the annulus piece dual to $e_{0}$, are connected to two arcs $y', y'' \in B_{e_0} $ that lie on distinct pants pieces of the decomposition. In this case, we define $u_y$ to be the number of
endpoints of $y,y''$ that are undercrossings of a come-back pattern of $\gamma$ and we define $o_y$ to be the number of
endpoints of $y,y''$ that are overcrossings of a come-back pattern of $\gamma$. We also define $v_y:=0$.
\end{itemize}

In the case where $e_0$ is not an $1$-edge loop, there is a concise description of states of maximum degree, that can be defined using the following notion of complexity:
\begin{definition}\label{def:complexity}Assume that $e_0$ is not an 1-edge loop. Given  $y\in B_{e_0}$  that lies on the annulus piece of the decomposition dual to $e_{0}$, define the complexity
$$\tau(y\cap \alpha_{e_0}):=t_y-u_y+v_y$$
where $t_y$ is the swift number of $y$ (see Definition \ref{def:swift}), and $u_y$, $v_ y$ are as defined above. 
\end{definition}

We will partition the set of points  in $\gamma\cap \alpha_{e_0}\neq \emptyset$ into disjoint sets
$T_1, T_2, \ldots, T_k$ such that, for $ 1\leq i\leq k-1$,
\begin{itemize}
\item [(i)]  all the points  $T_i$ have the same complexity $\tau_i$; and
\item[(ii)] the points in $T_i$  have complexity strictly smaller that the points in $T_j$, for any $i<j\leq k-1$. That is $\tau_i<\tau_j$.
\end{itemize}

\subsection{Determining the leading term} Next we  we will determine all the states in ${\mathcal S}_{\delta}$ that contribute to the maximum degree $\deg_z(P_{\delta}(z))$.
By definition for any $s\in {\mathcal S}_{\delta}$,  the only negative values of $s$ on points in $\gamma\cap P$ occur on points in $\gamma\cap \alpha_{e_0}$
and the number of these negative values is $l=\frac{1}{2}(I(\gamma, \alpha_{e_0})-\delta)$.

\begin{definition}\label{def:maxstates}
Let ${\mathcal S}_{{\rm max}}\subset {\mathcal S}^{+}_{\delta}$ denote the set of all states such that for any $p, q\in \gamma\cap \alpha_{e_0} $, we have: If $s(p)=-1$ and $s(q)=1$ then $\tau(p)<\tau(q)$.
\end{definition}
\begin{remark} \rm{We can construct the states of Definition \ref{def:maxstates} as follows:  Begin by assigning values -1 to points in the set $T_1$ of minimum complexity.
If  $l\leq |T_1|$ then we have $|T_1|\choose l$ states in ${\mathcal S}_{{\rm max}}$. In general, if 
$l$ is less or equal to the cardinality, say $\kappa$, of the set $\cup_{1=1}^j T_i$ and $l-\kappa\leq |T_{j+1}|,$ then we have $|T_{j+1}|\choose {l-\kappa}$ states in ${\mathcal S}_{{\rm max}}$.}
\end{remark}

We have the following lemma which implies Proposition \ref{prop:nozeropoly} and hence Theorem \ref{thm:lowerlimit}, in the case where $e_0$ is not a $1$-edge loop.

\begin{lemma}\label{lem:maxdegree} 
Assume that $e_0$ is not a $1$-edge loop and $\abs{\gamma\cap \alpha_{e_0}}\geq 1$.
Then, we have 
$$  \underset{s\in \mathcal{S}{{}}}  {\sum} W(s) \neq 0.$$
Furthermore, for  any $s\in  \mathcal{S}_{{\rm max}}$ and  any $s'\notin\mathcal{S}_{{\rm max}}$,
we have $ \deg_z(W(s'))< \deg_z(W(s))$.
\end{lemma}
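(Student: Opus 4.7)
The plan is to prove both assertions of the lemma---the non-vanishing of the sum and the strict degree inequality---simultaneously, by an exchange argument that tracks how $\deg_z(W(s))$ responds to reconfiguring the signs of $s$ on $\gamma\cap\alpha_{e_0}$.

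The first step is to set up an elementary swap. Let $s\in\mathcal{S}_{\delta}^{+}$ and pick two points $p,p'\in\gamma\cap\alpha_{e_0}$ on which $s$ takes opposite values. Define $\tilde s$ to agree with $s$ everywhere except that $\tilde s(p)$ and $\tilde s(p')$ are swapped, together with the values at their partners $q(p),q(p')\in\gamma\cap\alpha'_{e_0}$ (forced by the requirement that the annulus-arc weights of Definition~\ref{def:weights} be nonzero). Using Definition~\ref{def:weights} and Equation~\eqref{eq:degTriangCoeff}, together with Lemma~\ref{properties}(a) and the defining convention of $\mathcal{S}_{\delta}^{+}$ at over/undercrossings, the local $z$-degree contributions from the four annulus and pants arcs of $B$ incident to $\{p,q(p)\}\cup\{p',q(p')\}$ telescope. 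Using that $v_y=0$ in case II (which applies since $e_0$ is not a one-edge loop), these contributions reduce to a clean expression depending only on $\tau(p)$ and $\tau(p')$.

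The second step turns this formula into the stated degree inequality. If $s'\notin\mathcal{S}_{\max}$, Definition~\ref{def:maxstates} furnishes a pair $p,p'\in\gamma\cap\alpha_{e_0}$ with $s'(p)=-1$, $s'(p')=+1$, and $\tau(p)\geq\tau(p')$. Applying the elementary swap of Step~1 to such a pair, and iterating along a finite chain of swaps, brings $s'$ into $\mathcal{S}_{\max}$; the formula from Step~1 shows that each swap changes $\deg_z W$ monotonically in the direction dictated by the stated claim. Combined with Lemma~\ref{lemma:plusatover}, which already rules out any state outside $\mathcal{S}_{\delta}^{+}$ from attaining the extremal value of $\deg_z W$, this yields $\deg_z(W(s))<\deg_z(W(s'))$ for every $s\in\mathcal{S}_{\max}$ and $s'\notin\mathcal{S}_{\max}$, as required.

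The third step establishes $\sum_{s\in\mathcal{S}_{\max}}W(s)\neq 0$. All states in $\mathcal{S}_{\max}$ share the common degree $D=\deg_z W(s)$; by the partition of $\gamma\cap\alpha_{e_0}$ into complexity buckets $T_1,\ldots,T_k$ there is a unique index $j$ such that every $s\in\mathcal{S}_{\max}$ assigns $-1$ on the entirety of $T_1\cup\cdots\cup T_j$, assigns $+1$ on $T_{j+2}\cup\cdots\cup T_k$, and distributes exactly $l-|T_1\cup\cdots\cup T_j|$ negatives within the partial bucket $T_{j+1}$. The point is that the overall sign of $W(s)$ is constant as $s$ ranges over $\mathcal{S}_{\max}$: a swap of two points of equal complexity inside $T_{j+1}$ alters the signs contributed by the annulus-arc factors $s(p_1)^{t+1}$, by the pants-arc factors $s(p_1)s(p_2)\,\sign(\Delta_{s(p_1),s(p_2)})$ recorded in Equation~\eqref{eq:signTriangCoeff}, and by the prefactor $\varepsilon(\gamma)$ of Proposition~\ref{prop:statesum}, by an even total number of flips. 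Hence $\sum_{s\in\mathcal{S}_{\max}}W(s)$ equals $\binom{|T_{j+1}|}{l-|T_1\cup\cdots\cup T_j|}$ times a fixed nonzero monomial in $z$ and the $\Delta$'s, so it is nonzero; consequently $P_{\delta}(z)\neq 0$ in $\mathbb{Q}[z^{\pm 1}]$.

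The main obstacle is the sign bookkeeping in Step~3: one must check that, under every admissible swap within $T_{j+1}$, the local sign flips coming from $(-1)^{t_y+1}$ in the annulus-arc weights of Lemma~\ref{lemma:singleArcAnnuli}, from $\sign(\Delta_{--})=-1$ in Equation~\eqref{eq:signTriangCoeff}, and from the pants-arc factors on the far side of each affected pair, always pair off. This is what justifies treating the partition into complexity buckets as a rigid combinatorial object, and it is where the constraint $\tau(p)=\tau(p')$ imposes enough structural agreement on the adjacent arcs to force the required cancellations.
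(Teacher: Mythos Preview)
Your swap-based organization is essentially the same argument as the paper's, just phrased differentially: the paper computes the closed formula $\deg_z(W(s))=B_\gamma+\sum_{p\in\gamma\cap\alpha_{e_0}}s(p)\tau(p)$ by evaluating each triple $(y,y',y'')$ in $B_{e_0}$, and your swap difference is simply the increment of that formula. (A minor slip: there are six affected arcs per swap---one annulus arc and two pants arcs for each of $p$ and $p'$---not four.)

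The genuine gap is Step~3. You assert that a swap of two points $p,p'$ with $\tau(p)=\tau(p')$ preserves $\sign(W(s))$, and you attribute this to ``structural agreement on the adjacent arcs'' forced by the equality of complexities. But $\tau(p)=\tau(p')$ only says $t_y-u_y=t_{y'}-u_{y'}$; the swift numbers $t_y,t_{y'}$ and the undercrossing counts $u_y,u_{y'}$ may differ individually, so the adjacent arcs need not agree structurally at all. What actually makes the sign constant is a computation you never carry out: combining the annulus sign $\epsilon_y^{t_y+1}$ with the signs of the two $\Delta$-factors on the adjoining pants arcs (via Equation~\eqref{eq:signTriangCoeff}, using that the far-endpoint values $\mu,\nu$ are determined by whether those endpoints are undercrossings) collapses the sign of each triple to $\epsilon_y^{t_y+1-u_y}=\epsilon_y^{\tau_y+1}$. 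Only after this does one see that $\sign(W_{e_0}(s))=\prod_{y:\epsilon_y=-1}(-1)^{\tau_y+1}$ depends only on the multiset of $\tau$-values carrying a $-1$, and is therefore constant on $\mathcal{S}_{\max}$. Your final paragraph correctly flags this as the main obstacle but does not resolve it; the proposed mechanism (structural agreement of arcs) is incorrect, and without the explicit sign reduction the non-vanishing of $\sum_{s\in\mathcal{S}_{\max}}W(s)$ is unproved.
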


\begin{proof}To prove the lemma it is enough
 to prove the statements (a)-(b) below.
\begin{enumerate}
\item[(a)]  For any state $s\in  \mathcal{S}_{{\rm max}}$ and  any $s' \notin \mathcal{S}_{{\rm max}}$
we have $ \deg_z(W(s'))< \deg_z(W(s))$.
\item [(b)] The terms in the sum $\underset{s\in \mathcal{S}_{{\rm max}}}  {\sum} W(s)$
have the same sign; so there is no cancellation.
\end{enumerate}
Next we claim that item (a) will follow from Definition \ref{def:maxstates} and the following statement:
\begin{itemize}
\item[(c)]There is a constant $B_{\gamma}$ depending only on $\gamma$ such that for all $s\in \mathcal{S}_{\delta}^+,$ we have 
$$\deg_z(W(s))=B_{\gamma}+\underset{p\in \gamma\cap \alpha_{e_0}}{\sum}s(p)\tau(p).
$$
\end{itemize}
Indeed, lets us first note that by Lemma \ref{lemma:plusatover}, it is enough to consider states in $\mathcal{S}_{\delta}^+,$ since for $s\notin \mathcal{S}_{\delta}^+,$ there is $s'\in \mathcal{S}_{\delta}^+$ such that $\deg_z(W(s))<\deg_z(W(s'))$. Namely, we can take $s' \in \mathcal{S}_{\delta}^+$ that differs from $s$ only at overcrossings and undercrossings. 

Now, assuming that (c) is true, then for a state $s$ to  maximize $\deg_z(W(s))$ one needs to maximize the quantity $$\underset{p\in \gamma\cap \alpha_{e_0}}{\sum}s(p)\tau(p).$$ 
But for any $p\in \gamma \cap \alpha_{e_0}$, we have $s(p)\in\lbrace \pm 1 \rbrace$. Furthermore,  for any $s\in \mathcal{S}^+_{\delta}$,
there are the same number of $p\in \gamma \cap \alpha_{e_0}$ such that $s(p)=+1$ .
Hence to maximize the above  sum, one needs to distribute the $p$'s for which $s(p)=+1$ in decreasing order of the $\tau(p)$, which is, by Definition \ref{def:maxstates}, is  the same as requesting that $s\in \mathcal{S}_{{\rm max}}$. This finishes the proof of the claim that statement (c) implies statement (a).
\vskip 0.03in

Let us now turn to proving (b) and (c):
For any $s\in {\mathcal S}^{+}_{\delta}$
set
$$W_{e_0}(s):= \underset{y\in B_{e_0}}{\prod}w(y,s).$$
By Lemma \ref{properties} and for every state $s\in \mathcal{S}^+_{\delta},$ we can write $W(s)$ as a product of $W_{e_0}(s)$ and a factor that doesn't depend on $s$.
Thus  to finish the proof of the lemma it is enough to prove item (b) and (c) using $W_{e_0}(s)$ instead of $W(s).$

Note that the type of arcs in the pants with $\alpha_{e_0}$ have the form described in the first case of Figure \ref{fig:ArcCases} thanks to Lemma \ref{lem:come-backArcs}.

Then we can partition the arcs in $B_{e_0}$ into triples $ y, y' , y''$ as described in case (II) of Definition \ref{def:complexity} and we write
$$W_{e_0}(s):= \underset{y, y', y''\in B_{e_0}}{\prod}\big( w(y,s)\, w(y',s)\, w(y'',s)\big).$$
Given such a triple of arcs  and $s\in {\mathcal S}^{+}_{\delta}$, we
let
 $\epsilon_y:=s(y\cap \alpha_{e_0})=s(y\cap \alpha'_{e_0})$, which is the value of $s$ for both end points of $y$ and for one endpoint of each of $y',y''$.
 Let $\mu$ and $\nu$ denote the values that $s$ assigns to the second endpoint of $y'$ and $y''$ respectively.

By Definition \ref{def:weights}, and since $s$ assigns +1 to all overcrossing points, we have
 $$w(y,s)\, w(y',s)\, w(y'',s)=\  \big(\epsilon_y\big)^{t_y+1}\, z^{t_y  \epsilon_y} \, z^{2o_y} \, \Delta_{\mu, \epsilon_y}\ ,\Delta_{\epsilon_y,  \nu}.
 $$
By Equation \eqref{eq:degTriangCoeff}, we get:
\begin{eqnarray}\label{eq:degCase2}
\nonumber \deg_z(w(y,s)\, w(y',s)\, w(y'',s))&=&t_y\epsilon_y+2o_y+\frac{1+\epsilon_y\mu }{2}+\frac{1+\epsilon_y \nu}{2}
\\ \deg_z(w(y,s)\, w(y',s)\, w(y'',s))&=&t_y\epsilon_y+2o_y+1+\epsilon_y(1-u_y)
\end{eqnarray}
where to get the second equality we used that $\mu,\nu=-1$ if the second endpoint of $y',y''$ is an undercrossing, and $\mu,\nu=+1$ else.
This implies that $\frac{\mu+\nu}{2}=1-u_y$, for any $\mu, \nu \in\{\pm1\}$ and $u_y\in \{0,1,2,\}$.
Moreover, by Equation \eqref{eq:signTriangCoeff}, we get
\begin{equation}\label{eq:signCase2}
\sign(w(y,s)\, w(y',s)\, w(y'',s))=\epsilon_y^{t_y+1+u_y}=\epsilon_y^{t_y+1-u_y}
\end{equation}
Combining the contribution of all arcs in $B_{e_0}$ we get:

\begin{eqnarray*}
\deg_z(W_{e_0}(s))&=&\underset{y\in B_{e_0}}{\sum} (t_y  \epsilon_y-\epsilon_yu_y+ 2o_y+ 1+\epsilon_y)
\\ &=&2O_{e_0}+M(\gamma,P)+\delta+\underset{y\in B_{e_0}}{\sum} \epsilon_y \tau_y
\end{eqnarray*}
where $O_{e_0}$ is the total number of overcrossing points in $ B_{e_0}$, $M(\gamma,P)$ is given by  Equation 
\eqref{eq:maxint}, and we have set
$\tau_y:=\tau(y\cap \alpha_{e_0})$. This proves item (c) and thus only the states in $ \mathcal{S}_{{\rm max}}$ contribute to the maximum degree
 of  $P_{\delta}(z)$.
 
 Finally, by Equation \eqref{eq:signCase2} we have
  
\begin{eqnarray*}
{\rm sign}(W_{e_0}(s))&=& \underset{y\in B_{e_0}} {\prod}\ \big(\epsilon_y\big)^{t_y+1-u_y}
\\ &=&\underset{y,  \epsilon_y=-1}{\prod} (-1)^{\tau_y+1}.
\end{eqnarray*}
 This quantity is the same  for all $s\in  \mathcal{S}_{{\rm max}}$,
by the definitions, finishing the proof of (b).
\end{proof}

\subsection{Loop-edge}
The case where $e_0$ is a one-edge loop is similar, except we cannot give an explicit description of maximal states. However, it is still possible to show there is no cancellation among maximal states and prove:
\begin{lemma}\label{lem:maxdegreeCase2} 
Assume that $e_0$ is a $1$-edge loop and $\abs{\gamma\cap \alpha_{e_0}}\geq 1$.
Then we have 
$$  \underset{s\in \mathcal{S}}  {\sum} W(s) \neq 0.$$
\end{lemma}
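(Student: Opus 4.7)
The plan is to adapt the framework of Lemma \ref{lem:maxdegree} to the loop setting. By Lemma \ref{properties}(b), for any $s \in \mathcal{S}^+_\delta$ the ratio $W(s)/W_{e_0}(s)$, where $W_{e_0}(s) := \prod_{y \in B_{e_0}} w(y,s)$, is a constant depending only on $\gamma$ and $P$, so it suffices to show $\sum_{s \in \mathcal{S}_\delta} W_{e_0}(s) \neq 0$. As before, Lemma \ref{lemma:plusatover} restricts attention to states in $\mathcal{S}^+_\delta$, and by Proposition \ref{prop:polynomial}, nonvanishing of the leading $z$-coefficient of the sum implies $P_\delta(z) \neq 0$, hence $C(\delta) \neq 0$ by choosing $\theta$ with $e^{i\pi\theta/2}$ transcendental.

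Geometrically, since $e_0$ is a one-edge loop, $\alpha_{e_0}$ and $\alpha'_{e_0}$ bound a single pants piece $\Pi$ of $P \cup P'$ (with third boundary curve $\alpha_f$) together with the annulus $A$ between them. By Lemma \ref{lem:come-backArcs}(b), $\Pi$ contains no come-back patterns. Consequently $B_{e_0}$ consists of annulus arcs $z_1,\ldots,z_m \subset A$ joining $\alpha_{e_0}$ to $\alpha'_{e_0}$; pants arcs in $\Pi$ joining $\alpha_{e_0}$ to $\alpha'_{e_0}$; and pants arcs in $\Pi$ joining one of $\alpha_{e_0},\alpha'_{e_0}$ to $\alpha_f$. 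By Lemma \ref{properties}(a), points of $\gamma \cap \alpha_f$ receive state value $+1$, so only the values at points of $\gamma \cap (\alpha_{e_0} \cup \alpha'_{e_0})$ are free; by \eqref{sumcurve}, exactly $\tfrac{1}{2}(I(\gamma,\alpha_{e_0}) - \delta)$ of them must equal $-1$ on each side.

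Using Equations \eqref{eq:degTriangCoeff}, \eqref{eq:signTriangCoeff} and Definition \ref{def:weights}, I would compute local degree and sign contributions arc by arc. The decisive new feature is the matching constraint: an annulus arc has weight zero unless its two endpoints receive the same state value, so the non-vanishing states are precisely those compatible with the perfect matching $\mu$ between $\gamma \cap \alpha_{e_0}$ and $\gamma \cap \alpha'_{e_0}$ realized by $z_1,\ldots,z_m$. Collecting contributions from annulus arcs, pants arcs between $\alpha_{e_0}$ and $\alpha'_{e_0}$, and pants arcs to $\alpha_f$, one obtains an expression of the form
\begin{equation*}
\deg_z\bigl(W_{e_0}(s)\bigr) = D_\gamma + \sum_p s(p)\,\kappa(p),
\end{equation*}
where $p$ ranges over points of $\gamma \cap \alpha_{e_0}$ (constrained to share the value of $s$ with its $\mu$-partner on $\alpha'_{e_0}$), $D_\gamma$ depends only on $\gamma$, and the complexity $\kappa(p)$ encodes the swift number of the annulus arc through $p$ together with the degree contributions of the adjoining pants arcs at $p$ and at its matched partner. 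The set $\mathcal{S}_{\max}$ of degree-maximizing states then consists of those which assign $-1$ to the $\tfrac{1}{2}(I(\gamma,\alpha_{e_0}) - \delta)$ matched pairs of smallest $\kappa$, with remaining freedom only among ties.

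The main obstacle is sign non-cancellation among $\mathcal{S}_{\max}$. The sign of $W_{e_0}(s)$ factors into local signs, produced by Definition \ref{def:weights} and \eqref{eq:signTriangCoeff}, associated to each matched pair and each $\alpha_{e_0}$--$\alpha'_{e_0}$ pants arc. Any two elements of $\mathcal{S}_{\max}$ differ by a sequence of swaps of matched-pair labels within a complexity tie $\kappa(p) = \kappa(p')$; since swift numbers and pants-arc types are equal within such a tie, the local sign contribution from each side of the swap matches, and the total sign is preserved. With all signs aligned, $\sum_{s \in \mathcal{S}_{\max}} W_{e_0}(s)$ is a nonzero sum of same-sign terms, supplying a nonzero leading coefficient of $P_\delta(z)$ and completing the proof.
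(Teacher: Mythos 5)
Your reduction to $W_{e_0}(s)$ and the observation that annulus arcs force equal state values at their two endpoints are fine, but the central step fails: in the loop-edge case the degree of a state is not an affine function of the values $s(p)$, so the claimed formula $\deg_z\big(W_{e_0}(s)\big)=D_\gamma+\sum_p s(p)\,\kappa(p)$ is wrong. The reason is that the pants arcs inside the one-holed torus join a point of $\gamma\cap\alpha_{e_0}$ to a point of $\gamma\cap\alpha'_{e_0}$, and \emph{both} of these points carry free state values (unlike the non-loop case of Lemma \ref{lem:maxdegree}, where by Lemma \ref{properties}(a) and the come-back rule the second endpoint of each adjoining pants arc is pinned). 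Such an arc contributes $\Delta_{\varepsilon,\varepsilon'}$, hence $\frac{1+\varepsilon\varepsilon'}{2}$ to the degree, a quadratic coupling between two distinct free variables. Following a component of $\gamma$ inside the one-holed torus, one gets $\deg(w(y,s))=\sum_i\big(t_i\varepsilon_i+\frac{1+\varepsilon_{i-1}\varepsilon_i}{2}\big)$, an Ising-type energy with nearest-neighbour couplings along the cyclic (or linear) chain of annulus arcs; whenever a component meets $\alpha_{e_0}$ at least twice these couplings cannot be absorbed into per-point complexities $\kappa(p)$. Consequently your description of $\mathcal{S}_{\max}$ (assign $-1$ to the matched pairs of smallest $\kappa$, freedom only within ties) is unjustified --- this is exactly why the paper says no explicit description of maximal states is available here --- and the tie-swap argument for sign coherence collapses with it: two degree-maximizing states can differ by flipping an entire block of consecutive $\varepsilon_i$ along a cycle, the change in the linear terms being compensated by the two boundary couplings, so they need not be related by swaps within a complexity tie.

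The paper's proof takes a different route that sidesteps this issue entirely: it computes, for \emph{every} $s\in\mathcal{S}_{\delta}$, both $\deg(W(s))$ and $\sign(W(s))$ (using $\varepsilon=(-1)^{(\varepsilon-1)/2}$ and $\sign(\Delta_{\varepsilon,\mu})=(-1)^{(\varepsilon-1)(\mu-1)/4}$) and shows the identity $\sign(W(s))=(-1)^{\deg(W(s))+B'}$ with $B'$ independent of $s$. This makes identifying the maximal states unnecessary: whatever they are, all maximal-degree terms automatically carry the same sign, so the leading coefficient of the renormalized sum cannot vanish. To repair your argument you would need to establish such a sign--degree relation (or some other mechanism ruling out cancellation among maximizers of the coupled energy); the threshold description of $\mathcal{S}_{\max}$ alone does not exist in this case and cannot carry the proof.
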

Note that thanks to Lemma \ref{lem:come-backArcs}, the arcs in the pants containing $\alpha_{e_0}$ are of the form described in the second and third drawing of Figure \ref{fig:ArcCases}. We should remark that although it is not represented in the picture, they can have arbitrarily large intersection with $\alpha_{e_0}.$
\begin{proof}
We will prove that states of maximal degree all appear with the same sign. Since maximal states are all in $\mathcal{S}_{\delta},$ it is sufficient to compute the degree and sign of terms $W(s)$ with $s$ in $\mathcal{S}_{\delta}.$ Let us consider the collections $Y_1$ and $Y_2$ of components of the intersection $\gamma$ with the one-holed torus component that is the connected component of $\alpha_{e_0}$ in $\Sigma\setminus (P\setminus \alpha_{e_0}),$ where $Y_1$ corresponds to simple closed curves and $Y_2$ to arcs that start and end on $\alpha_f$ the boundary component of the one-holed torus.

Let us consider a closed curve $y$ in $Y_1,$ decomposed as $y_1 z_1 \ldots y_p z_p$ where $y_i$ are in the pants piece and $z_i$ are in the annulus piece. Let us call $\varepsilon_i$ the values of the state on the intersection points in $z_i\cap (\alpha_{e_0} \cup \alpha_{e_0}'),$ remembering that these values are equal at start and end of any arc in an annulus piece. We have
$$w(y,s):=\underset{i=1}{\overset{p}{\prod}}w(y_i,s)\, w(z_i,s)=\underset{i=1}{\overset{p}{\prod}}(\varepsilon_i)^{t_i+1}\, z^{\varepsilon_i t_i}\,  \Delta_{\varepsilon_{i-1},\varepsilon_{i}}$$
where $\varepsilon_{0}=\varepsilon_p$ by definition and $t_i$ is the swift number of the annulus arc $z_i,$ therefore
$$\deg(w(y,s))=\underset{i=1}{\overset{p}{\sum}} \left(t_i\varepsilon_i +\frac{1+\varepsilon_{i-1}\varepsilon_i}{2}\right)$$
and 
$$\sign(w(y,s))=(-1)^{\underset{i=1}{\overset{p}{\sum}} \left(\frac{(t_i+1)(\varepsilon_i-1)}{2}+\frac{(\varepsilon_i-1)(\varepsilon_{i-1}-1)}{4}\right)}=(-1)^{\frac{1}{2}\underset{i=1}{\overset{p}{\sum}} \left(t_i\varepsilon_i +\frac{1+\varepsilon_{i-1}\varepsilon_i}{2}\right)-\frac{p}{2}-\frac{1}{2}\underset{i=1}{\overset{p}{\sum}} t_i},$$
where we used that for $\varepsilon\in \lbrace \pm 1\rbrace$ we have $\varepsilon=(-1)^{\frac{\varepsilon-1}{2}},$ and moreover $\sign(\Delta_{\varepsilon,\mu})=(-1)^{\frac{(\varepsilon-1)(\mu-1)}{4}}.$
Similarly, for an arc $y$ in $Y_2$ decomposed as $y_0z_1\ldots z_p y_p$ we have 
$$w(y,s):=w(y_0,s)\, \underset{i=1}{\overset{p}{\prod}}w(y_i,s)\, w(z_i,s)=\Delta_{\varepsilon_0,\varepsilon_1}\underset{i=1}{\overset{p}{\prod}}(\varepsilon_i)^{t_i+1}\, z^{\varepsilon_i t_i} \Delta_{\varepsilon_{i},\varepsilon_{i+1}}$$
where by definition $\varepsilon_0=\varepsilon_p=+1$ since those shifts correspond to intersection points on $\alpha_f$ and we also get that $$\sign(w(y,s))=(-1)^{deg(w(y,s))+B},$$ where $B$ is a constant which does not depend on $s\in \mathcal{S}_{\delta}.$
Taking the product over arcs in $Y_1$ and $Y_2,$ we get that $$\sign(W(s))=(-1)^{\deg(W(s))+B'},$$ for all $s\in \mathcal{S}_{\delta}$, where $B'$ is a constant which does not depend on $s.$ Therefore the maximal degree terms do not cancel out and $\underset{s\in \mathcal{S}}{\sum} W(s)\neq 0.$
    
\end{proof}

We are now ready to complete the proof of Proposition \ref{prop:nozeropoly} which as explained in the beginning  of the section
also proves Theorem \ref{thm:lower}. Recall from Section \ref{sec:two} that the completion of the proof of Theorem \ref{thm:lower}
also completes the proof of Theorem \ref{thm:inequalitiesgeneral}.

\begin{proof}[Proof of Proposition  \ref{prop:nozeropoly}]

By Proposition \ref{prop:statesum}  and Lemma \ref{lem:maxdegree} we have
$$ C(\delta)=\varepsilon(\gamma)\underset{s\in \mathcal{S}_{\delta}}  {\sum} W(s)\neq 0,$$
and by Proposition \ref{prop:polynomial} we have  $P_{\delta}(z)\neq 0$ in ${\mathbb Q}[z^{\pm1}]$.
\end{proof}


 \section{A metric from TQFT on the pants graph} \label{sec:TQFTmetric}
 Let $\Sigma$ be a closed surface of genus $g>1$.
The {\emph{pants graph}}  $\CE$ is an abstract graph whose  set of vertices  $\CV$ are in one-to-one correspondence
 with pants decompositions of $\Sigma$.  Two vertices are connected by an edge if they are related  by an $A$-move or an $S$-move shown in  Figure \ref{fig:ASmoves}. It is known that the  graph $\CE$  is connected \cite{HT}.
 
The path metric 
$d_{\pi} : \CV \times \CV \rightarrow [0,\infty),$  assigns to a pair of pants decompositions $P, Q$ the minimum number of edges over all the paths in $\CE$ from $P$ to $Q$. In this metric the length of each edge of $\CV$ has length 1.
It is known that the mapping class group acts by isometries on $(\CE, d_{\pi})$.

In the setting of Proposition \ref{prop:metricproperties} of Appendix A, $d_{\pi} $ is the metrification of the function
$${\pi}: \CV \times \CV \rightarrow [0,\infty],$$ that is defined by $\pi(P,Q)=1$ if $P$ and $Q$ are connected by an edge in
$\CE$ and $\pi(P,Q)=\infty$  otherwise.

\subsection{The TQFT  metric}We consider
$\dn : \CV\times \CV \rightarrow [0,\infty)$, the metrification of 
$n :\CV \times \CV \rightarrow [0,\infty)$,where $n(Q, P)$ is the quantum intersection number of $Q$ with respect to $P$.
More specifically, given $P, Q\in \CV$,  set  $\nu(P, Q):= \min (n(P, Q), \ n(Q, P))$ and
let $S(P,Q)$ denote the set of all  sequences $x_n:=(P_1,\ldots , P_n)$ of points $\CV$, where $n\geq 0$ is an integer, and $P_1=P$ and $P_n=Q$.
Now we define
\begin{equation}
\dn(P, Q)=\underset{x_n \in S(P, Q)}{\inf} \left( \nu(P_1, P_2)+ \ldots +  \nu(P_{n-1}, P_n) \right).
\label{eq:metricdefinition}
\end{equation}
We stress that in the above, we do not require $P_i$ and $P_{i+1}$ to be related by an elementary move.

The following lemma follows immediately from the definition.
\begin{lemma}\label{lem:bound} We have $\nu(P, Q)\geq \dn(P, Q)$.
\end{lemma}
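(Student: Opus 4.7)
The plan is to observe that the infimum defining $\dn(P,Q)$ in Equation \eqref{eq:metricdefinition} is taken over \emph{all} finite sequences in $S(P,Q)$, including the trivial length-two sequence $(P_1,P_2)=(P,Q)$. So the main step is simply to exhibit this particular sequence as a witness that bounds the infimum.

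Concretely, I would take $n=2$ and the sequence $x_2=(P_1,P_2)$ with $P_1:=P$ and $P_2:=Q$. By definition $x_2\in S(P,Q)$, and the corresponding sum appearing inside the infimum of \eqref{eq:metricdefinition} collapses to the single term $\nu(P_1,P_2)=\nu(P,Q)$. Since the infimum is no greater than the value attained by any particular element of $S(P,Q)$, we conclude
\[
\dn(P,Q)\ \leq\ \nu(P,Q),
\]
which is exactly the claimed inequality.

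There is no substantive obstacle here; the statement is a direct consequence of the definition of $\dn$ via metrification. The only thing worth double-checking is the convention that length-two sequences $(P,Q)$ are admitted in $S(P,Q)$ (i.e.\ that $n\geq 2$ is allowed), which is implicit in the description preceding \eqref{eq:metricdefinition}.
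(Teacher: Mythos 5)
Your proposal is correct and is exactly the paper's argument: the paper simply notes that the inequality follows immediately from the definition of $\dn$ as an infimum over all sequences in $S(P,Q)$, and taking the two-term sequence $(P,Q)$ as a witness is precisely that observation.
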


Next we prove that $\dn$ is a metric on the $0$-skeleton of the pants graph.

\begin{lemma} \label{lem:itsmetric}The function $\dn$ is a metric and the mapping class group $\M$ acts by isometries on $(\CV, \dn)$. That is,
\begin{itemize}
 \item[(i)] we have $\dn(P, Q)=0$ if and only if  $P=Q$; and
\item[(ii)] for any $P,Q\in \CV$ and $\phi\in\M$, $\dn(\phi(P), \phi(Q))=\dn(P, Q)$.
\end{itemize}
\end{lemma}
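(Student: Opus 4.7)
The plan is to verify that $\dn$ satisfies the four metric axioms and that $\M$ acts isometrically, exploiting two ingredients: the symmetry and non-negativity of $\nu$ built into its definition, and the mapping class group invariance of the quantum intersection number established in Proposition \ref{prop:MCGaction}. First I would note that $\nu(P,Q)=\min(n(P,Q), n(Q,P))$ is manifestly symmetric and non-negative (indeed integer-valued), and that $\dn$ is finite on $\CV \times \CV$ because the pants graph is connected via elementary moves and Proposition \ref{prop:ASmovecomputation} yields $\nu(P,Q)=2$ along each such edge.

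Symmetry of $\dn$ follows since reversing a sequence $(P_1,\ldots,P_n) \in S(P,Q)$ produces an element of $S(Q,P)$ with the same weight sum, and the triangle inequality follows by concatenation: given $x \in S(P,Q)$ and $y \in S(Q,R)$, splicing at the common endpoint $Q$ yields an element of $S(P,R)$ whose sum is the sum for $x$ plus the sum for $y$; taking infima then gives $\dn(P,R) \leq \dn(P,Q) + \dn(Q,R)$. This is the routine ``metrification'' verification alluded to in the text.

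For the separation axiom (item (i)), the case $P=Q$ is trivial since the length-one sequence $(P)$ has an empty weight sum equal to $0$. The main step is the converse: if $P \neq Q$, then any two distinct pants decompositions of $\Sigma$ satisfy $I(P,Q)>0$, because a simple closed curve disjoint from a pants decomposition of $\Sigma$ must be isotopic to one of its curves, and both $P$ and $Q$ contain $3g-3$ curves. The lower bound of Theorem \ref{thm:inequalities} then gives $n(P,Q), n(Q,P) \geq \frac{I(P,Q)}{3g-3} \geq 1$, hence $\nu(P',Q') \geq 1$ for every consecutive distinct pair in a sequence from $P$ to $Q$. Since any $(P_1,\ldots,P_n) \in S(P,Q)$ with $P_1 \neq P_n$ must contain at least one such consecutive distinct pair, the sum is at least $1$, so $\dn(P,Q)\geq 1>0$. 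I expect this appeal to Theorem \ref{thm:inequalities} to be the only nontrivial input; everything else is formal.

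Finally, to prove item (ii), I would apply Proposition \ref{prop:MCGaction} with the multicurve $\gamma$ taken to be a pants decomposition: for any $\phi \in \M$, this yields $n(\phi(Q), \phi(P)) = n(Q,P)$ and $n(\phi(P), \phi(Q)) = n(P,Q)$, hence $\nu(\phi(P), \phi(Q)) = \nu(P,Q)$. The induced map $(P_1,\ldots,P_n) \mapsto (\phi(P_1), \ldots, \phi(P_n))$ is a bijection $S(P,Q) \to S(\phi(P), \phi(Q))$ preserving weight sums term by term, so taking infima gives $\dn(\phi(P), \phi(Q)) = \dn(P,Q)$. No further obstacle is expected beyond the separation axiom.
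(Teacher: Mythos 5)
Your proof is correct and follows essentially the same route as the paper: your hand verification of symmetry, the triangle inequality, finiteness, and $\M$-equivariance of the infimum is exactly the content of Proposition \ref{prop:metricproperties} (parts (a), (c), (d)) applied to $n$, with Proposition \ref{prop:MCGaction} supplying invariance, and the separation axiom reduced to a uniform positive lower bound on $\nu$ for distinct pants decompositions --- where the paper quotes $n(P,Q)\geq 2$ (via Proposition \ref{prop:ASmovecomputation} and Remark \ref{rem:CM}) while you derive positivity from the lower bound of Theorem \ref{thm:inequalities} together with the topological fact that $I(P,Q)>0$ for $P\neq Q$. One small slip: the inequality $\frac{I(P,Q)}{3g-3}\geq 1$ is false in general (e.g.\ for elementary-move neighbors, where $I(P,Q)\in\{1,2\}$ but $3g-3\geq 3$), yet the conclusion you actually use, $\nu(P,Q)\geq 1$, survives because $n(P,Q)$ is a non-negative integer bounded below by the strictly positive quantity $\frac{I(P,Q)}{3g-3}$ --- the integer-valuedness you note at the outset closes this gap immediately.
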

\begin{proof} The first statement follows from Proposition \ref{prop:metricproperties} and the fact that 
$n(P, Q)\geq 2$ whenever $P\neq Q$ which follows  from Proposition \ref{prop:ASmovecomputation}.
The second statement follows from the fact that $n(\phi(P), \phi(Q))=n(P, Q)$ for $\phi\in \M$ and $P,Q\in \CV$, which was shown in Proposition \ref{prop:MCGaction}.
\end{proof}

\begin{definition}\label{def:qusiisometry} A function between metric spaces $f: (X_1, d_1) \longrightarrow (X_2, d_2)$ is called a {\emph{quasi-isometric embedding}} if there are constants $A, B$ such that
$${\frac{1}{A}}d_1(x,y)-B\leq d_2(f(x), f(y)) \leq A d_1(x,y) +B,$$
for any $x,y\in X_1$.

If $f$ has uniformly dense image we will say that $f$ is a {\emph{quasi-isometry}}, and that the spaces  $(X_1, d_1)$ and $(X_2, d_2)$ are {\emph{quasi-isometric}}.

In the case where $X_1=X_2$ we will say that the metrics $d_1$, $d_2$ are quasi-isometric if the identity is a quasi-isometric embedding.
\end{definition}

\subsection{Comparing to the path metric}The main result in this section is the following theorem that shows that the metrics $\dn(P,Q) $ and $d_{\pi}(P,Q)$ are quasi-isometric. Theorem \ref{thm:PathTQFT}, combined with work of Brock \cite{Brockpants, Brockvol}, allows us to relate
 $\dn$ to the Weil-Petersson metric  on the
Teichm\"uller space and to hyperbolic 3-dimensional geometry.

\begin{theorem} \label{thm:PathTQFT} The identity map $(\CV, \ d_{\pi})\longrightarrow (\CV, \ \dn)$ is a quasi-isometry. More specifically there is a constant $A>0$,  only depending on the topology of $\Sigma$,
such that
$$\frac{A}{3g-3}d_{\pi}(P, Q)\leq \dn(P,Q) \leq 2 d_{\pi}(P,  Q),$$
for any $P, Q\in \CV$ 
\end{theorem}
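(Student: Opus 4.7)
The upper inequality $\dn(P,Q) \leq 2\, d_\pi(P,Q)$ is essentially immediate from Proposition \ref{prop:ASmovecomputation}. The plan is to take a geodesic path $P = P_1, P_2, \ldots, P_n = Q$ in $\CE$, so that $n-1 = d_\pi(P,Q)$. Since each consecutive pair $P_i, P_{i+1}$ differs by an elementary move, Proposition \ref{prop:ASmovecomputation} gives $n(P_i, P_{i+1}) = n(P_{i+1}, P_i) = 2$, hence $\nu(P_i, P_{i+1}) = 2$. Substituting this sequence into the definition \eqref{eq:metricdefinition} of $\dn$ yields $\dn(P,Q) \leq 2(n-1) = 2\, d_\pi(P,Q)$.

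For the lower inequality, the plan is to combine Theorem \ref{thm:inequalitiesgeneral} with a converse-type bound relating path distance in the pants graph to geometric intersection number. Specifically, the key additional input (stated later in the paper as Proposition \ref{pro:pathintersection}, and attributed in the acknowledgments to helpful discussions about the literature) is that there exists a constant $C = C(\Sigma) > 0$ with
$$d_\pi(P', Q') \leq C \cdot I(P', Q')$$
for all $P', Q' \in \CV$. This should follow from hierarchical machinery of Masur--Minsky adapted to the pants graph, together with the observation that one can reduce intersection between pants decompositions by a controlled sequence of elementary moves.

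Granting this input, the lower inequality will follow by a triangulation argument. For any finite sequence $P = P_1, P_2, \ldots, P_n = Q$ in $\CV$, the triangle inequality for $d_\pi$ together with the two estimates gives
$$d_\pi(P,Q) \;\leq\; \sum_{i=1}^{n-1} d_\pi(P_i, P_{i+1}) \;\leq\; C \sum_{i=1}^{n-1} I(P_i, P_{i+1}) \;\leq\; C(3g-3) \sum_{i=1}^{n-1} \nu(P_i, P_{i+1}),$$
where the last step uses Theorem \ref{thm:inequalitiesgeneral} together with the symmetry $I(P_i, P_{i+1}) = I(P_{i+1}, P_i)$, so that the lower bound $I/(3g-3)$ holds equally for $\nu = \min(n(\cdot,\cdot), n(\cdot,\cdot))$. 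Taking the infimum over such sequences in the definition of $\dn$ produces $d_\pi(P,Q) \leq C(3g-3)\, \dn(P,Q)$, which is the claimed inequality with $A = 1/C$.

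The principal obstacle in this plan is establishing the pants-graph intersection bound $d_\pi(P', Q') \leq C \cdot I(P', Q')$. The curve complex only admits a logarithmic bound of this form by Masur--Minsky, so the linear pants-complex bound is subtler and is the real content of Proposition \ref{pro:pathintersection}; once it is in hand, the rest of the argument for Theorem \ref{thm:PathTQFT} is a short assembly of Theorem \ref{thm:inequalitiesgeneral}, Proposition \ref{prop:ASmovecomputation}, and the definition of $\dn$ via metrification.
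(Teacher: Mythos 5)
Your proposal is correct and follows essentially the same route as the paper: the upper bound comes from feeding a geodesic in the pants graph into the definition of $\dn$ via Proposition \ref{prop:ASmovecomputation}, and the lower bound comes from combining Theorem \ref{thm:inequalitiesgeneral} with the cited intersection-number bound of Proposition \ref{pro:pathintersection} and summing over a sequence realizing (or approximating) the infimum defining $\dn$. The paper likewise treats Proposition \ref{pro:pathintersection} as an external input, quoting Brock, Masur--Minsky, and Aougab--Taylor--Webb rather than proving it, so your identification of that step as the only substantive outside ingredient matches the paper's presentation.
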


The upper bound in Theorem \ref{thm:PathTQFT} is shown in the next lemma.

\begin{lemma}\label{lem:upper} For any $P, Q\in \CV$, we have $\dn(P,Q) \leq 2 d_{\pi}(P,  Q)$.
\end{lemma}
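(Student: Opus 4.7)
The plan is to exploit Proposition \ref{prop:ASmovecomputation}, which tells us exactly what $n(P,Q)$ (and hence $\nu(P,Q)$) is when $P$ and $Q$ are joined by an edge of the pants graph. Since $\dn$ is built by taking infima over finite sequences, any single path in the pants graph $\CE$ from $P$ to $Q$ gives an upper bound on $\dn(P,Q)$.

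More precisely, I would fix a geodesic path in $(\CE, d_\pi)$ from $P$ to $Q$, that is, a sequence $P = P_1, P_2, \ldots, P_n = Q$ of pants decompositions where consecutive terms are joined by an edge of $\CE$ and $n - 1 = d_\pi(P, Q)$. Such a path exists because $\CE$ is connected and the distance $d_\pi(P,Q)$ is realized by an edge-path of that length. Each consecutive pair $(P_i, P_{i+1})$ differs by an elementary ($A$- or $S$-) move, so Proposition \ref{prop:ASmovecomputation} gives $n(P_i, P_{i+1}) = n(P_{i+1}, P_i) = 2$, hence $\nu(P_i, P_{i+1}) = 2$.

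Plugging this sequence $x_n = (P_1, \ldots, P_n)$ into the definition \eqref{eq:metricdefinition} of $\dn$, we obtain
\[
\dn(P, Q) \;\leq\; \sum_{i=1}^{n-1} \nu(P_i, P_{i+1}) \;=\; 2(n-1) \;=\; 2 d_\pi(P, Q),
\]
which is the desired inequality. There is essentially no obstacle here: the argument is a direct comparison of the two metrics using the edge-level computation of the quantum intersection number, and the fact that $\dn$ is, by construction, no larger than the total $\nu$-cost along any admissible sequence.
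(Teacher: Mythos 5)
Your proof is correct and follows essentially the same route as the paper: pick a minimal-length edge path in $\CE$ from $P$ to $Q$, use Proposition \ref{prop:ASmovecomputation} to see that each elementary move contributes $\nu=2$, and bound $\dn(P,Q)$ by the total cost of that sequence. No gaps.
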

\begin{proof}
By Proposition \ref{prop:ASmovecomputation},
 if two pairs of pants $P, P'\in \CV$ are related
 an $S$-move or an $A$-move then $n(P, P')=2$. Thus, $\dn(P, P')= \nu(P, P'):= \min (n(P, P'), \ n(P', P))\leq 2$.
 Thus if $d_{\pi}(P, P')=1$, then $\dn(P, P')\leq 2$ and we have
\begin{equation}
\dn(P, P')= 2 \ d_{\pi}(P, P').
\label{equalone}
\end{equation}
For arbitrary  $P, Q\in \CV$, pick a minimum length path from $P$ to $Q$ in $\CE$
and write $d_{\pi}(P, Q)=\sum_{i=1}^{r}d_{\pi}(P_i, P_{i+1}),$ with $d_{\pi}(P_i, P_{i+1})=1$. Now 
by \eqref{equalone}
$$2d_{\pi}(P, Q)=\sum_{i=1}^{r}2d_{\pi}(P_i, P_{i+1})=\sum_{i=1}^{r}\dn(P_i, P_{i+1})\geq \dn(P, Q),$$
which finishes the proof.

\end{proof}

\subsection{The lower bound} 
To prove  the lower inequality of \ref{thm:PathTQFT} we need the following. 

\begin{proposition} \label{pro:pathintersection} Given $\Sigma$ a closed surface of genus at least two, there is a constant $B>0$, only depending on $\s$, so that for any  $P, Q\in \CV$ we have
$\ d_{\pi}(P, Q)\leq  B\ I(P,Q)$.
\end{proposition}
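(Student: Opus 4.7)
The plan is to prove the proposition by induction on $I(P, Q)$. For the base case $I(P, Q) = 0$, note that both $P$ and $Q$ are maximal multicurves with $3g - 3$ components; zero geometric intersection forces $P \cup Q$ to be itself a multicurve, so by maximality of $P$ we have $Q \subseteq P$ and hence $P = Q$, giving $d_\pi(P, Q) = 0$.

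For the inductive step, I would establish the following local reduction lemma: there is a constant $C = C(\Sigma)$ such that whenever $I(P, Q) \geq 1$, one can find $P' \in \CV$ with $d_\pi(P, P') \leq C$ and $I(P', Q) \leq I(P, Q) - 1$. Iterating yields $d_\pi(P, Q) \leq C \cdot I(P, Q)$, proving the proposition with $B = C$. To prove the local reduction, pick a curve $\alpha \in P$ with $I(\alpha, Q) \geq 1$ and consider the complementary subsurface $S_\alpha \subset \Sigma$ in which $\alpha$ is essential, namely either a four-holed sphere or a one-holed torus, whose boundary is formed by curves of $P \setminus \{\alpha\}$. Elementary moves on $P$ that change only $\alpha$ correspond precisely to Farey-graph neighbors of $\alpha$ in the curve graph $C(S_\alpha)$, and since the other curves of $P$ are unchanged one has the identity $I(P', Q) = I(P, Q) - I(\alpha, Q) + I(\alpha', Q)$ for any $P'$ obtained by replacing $\alpha$ with a neighboring curve $\alpha'$. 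The task then reduces to finding, within the Farey graph of $S_\alpha$, a path of length $\leq C$ from $\alpha$ to some curve $\alpha'$ satisfying $I(\alpha', Q \cap S_\alpha) < I(\alpha, Q \cap S_\alpha)$.

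The main obstacle is producing such a bounded-length reducing path in the Farey graph. A single elementary move need not decrease $I(\alpha, Q \cap S_\alpha)$, but since the Farey graph of a four-holed sphere or one-holed torus is the classical Farey tessellation — hyperbolic and combinatorially very well understood — one can carry out a local analysis of the arc pattern $Q \cap S_\alpha$ crossing $\alpha$ and construct a reducing Farey sequence of length bounded in terms of the topology of $\Sigma$. Complexity-reduction arguments of this type for the four-holed sphere and one-holed torus are standard in the curve and marking complex literature (Hatcher--Thurston, Masur--Minsky, Aramayona--Parlier--Shackleton, and related works), and are likely the content of the references the authors thank Futer, Schleimer, and Taylor for pointing out.
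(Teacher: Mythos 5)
Your overall scheme (induction on $I(P,Q)$, additivity $I(P',Q)=I(P,Q)-I(\alpha,Q)+I(\alpha',Q)$, base case $I=0\Rightarrow P=Q$) is fine, but the entire content of the proposition has been pushed into the ``local reduction lemma,'' which you assert rather than prove, and the mechanism you propose for it does not work in general. You reduce to the following task: inside the Farey graph of the one-holed torus or four-holed sphere $S_\alpha$ determined by $\alpha$ and $P\setminus\{\alpha\}$, find a boundedly short path from $\alpha$ to a curve $\alpha'$ with $I(\alpha',Q\cap S_\alpha)<I(\alpha,Q\cap S_\alpha)$. Such an $\alpha'$ need not exist at \emph{any} Farey distance: the restricted system $Q\cap S_\alpha$ can fill $S_\alpha$ (e.g.\ in a one-holed torus it may contain arcs of two transverse types), in which case every essential simple closed curve in $S_\alpha$ meets it, and if $\alpha$ already realizes the positive minimum of $I(\,\cdot\,,Q\cap S_\alpha)$ then no move supported in $S_\alpha$ can decrease the total intersection. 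So the monotone curve-by-curve descent can get stuck with $I(P,Q)\geq 1$; one must then either modify a different curve of $P$ (which changes the subsurfaces themselves) or allow moves that temporarily fail to decrease $I(\,\cdot\,,Q)$, and bounding the number of such non-decreasing moves by a constant depending only on $\Sigma$ is exactly the hard point. Appealing to ``standard complexity-reduction arguments'' here is circular: a uniform-constant reduction lemma of this kind is essentially equivalent to the proposition itself.

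For comparison, the paper does not attempt an elementary induction at all: it derives the statement from known results, namely the argument in Brock's Lemmas 3.3 and 4.2 (which rests on the Masur--Minsky subsurface-projection distance formula, Theorem 6.12 of their paper), with effective constants available from Aougab--Taylor--Webb (Lemma 7.1), or alternatively from Masur--Minsky combined with Choi--Rafi. Those tools are precisely what controls the global pants distance in terms of intersection number when no local monotone reduction is available; if you want a self-contained proof, you would need to either import that machinery or find a genuinely new argument for the uniform reduction step, which your current sketch does not supply.
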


Proposition \ref{pro:pathintersection} is well known to experts 
on curve complexes of surfaces and their relations to hyperbolic
geometry. A proof is implicit in Brock's work \cite{Brockpants}: The proof
of \cite[Lemma 3.3]{Brockpants} implies that there is $B>0$ so that the statement of Proposition
 \ref{pro:pathintersection} holds. The proof that $B$ only depends on $\s$ is given in the proofs of  \cite[Lemma 4.2]{Brockpants} 
 using a result
of Mazur and Minsky
\cite[Theorem 6.12]{MaMi} implying that the path distance in the pants complex of a surface is bounded above by a sum of distances in the curve complexes of (non-annular)
subsurfaces.
Aougab, Taylor and Webb \cite{ ATW} have made some of the estimates of \cite{Brockpants, MaMi} effective and 
and  \cite[Lemma 7.1]{ ATW} states 
 an effective version of Proposition \ref{pro:pathintersection} 
 making the constant $B$ explicit. Finally, Proposition \ref{pro:pathintersection} can also be derived by \cite[Theorem 6.12]{MaMi} and  a result of  Choi and Rafi \cite[Corollary D]{CRa}.
  
We are now ready to give the proof of Theorem \ref{thm:PathTQFT}.

\begin{proof} By Lemma \ref{lem:upper} we only need to deduce the lower inequality.

By definition of $\dn$, given $P, Q\in \CV$ we can find a sequence of points  $\{P_i\}_{i=1}^r$  in $\CV$
such that we have
\begin{enumerate}
\item[(i)] $P=P_1$ and $Q=P_r$,
\item[(ii)]  $\dn(P_i, P_{i+1})=\nu(P_i, P_{i+1}):={\min}\{n(P_i, P_{i+1}), \ n(P_{i+1}, P_{i})\}$, and
\item[(iii)] $\dn(P, Q)= \sum_{i=1}^{r}\dn(P_i, P_{i+1}).$
\end{enumerate}

By Theorem \ref{thm:inequalitiesgeneral}, for $1\leq i\leq r$, we have
$$\dn(P_i, P_{i+1})=\nu(P_i, P_{i+1})\geq \frac{1}{(3g-3)} I(P_i, P_{i+1}),$$

which by
Proposition \ref{pro:pathintersection} gives
$$\dn(P_i, P_{i+1})\geq \frac{1}{B(3g-3)}d_{\pi}(P_i, P_{i+1}).$$
for $i=1,\ldots, r$. Thus we obtain
$$\dn(P, Q)= \sum_{i=1}^{r}\dn(P_i, P_{i+1})\geq  \frac{1}{B(3g-3)} \sum_{i=1}^{r}d_{\pi}(P_i, P_{i+1}) \geq \frac{A}{(3g-3)}
 d_{\pi}(P, Q),$$
 where $A:=\frac{1}{B}$, and the desired result follows.
  
\end{proof}

\section{ Relations with Teichm\"uller space and hyperbolic geometry} \label{sec:TSconnections}
\subsection{ Weil-Petersson metric and TQFT} The  \emph {Teichm\"uller space}
  ${\TS}$ parametrizes finite area hyperbolic structures on $\s$:
  Points in $\TS$ are equivalence classes of pairs  $(X, \phi)$ of a finite area hyperbolic surface and a homomorphism
  $\phi: X \longrightarrow \s$, where  $(X, \phi)$ is equivalent to  $(Y, \psi)$ if there is an isometry $f:X \longrightarrow Y$  
  so that $\psi\circ f$ and $\phi$ are isotopic.

 Brock \cite{Brockpants} proved that  the 0-skeleton $\CV$ of the pants graph is quasi-isometric to
  $\TS$  with the Weil-Petersson  metric $d_{WP}$. We will use this result and 
  Theorem \ref{thm:PathTQFT}  to derive Theorem \ref{thm:QI}.
  We will not need a detailed definition of the Weil-Petersson  metric and we will
 not work with its geometric properties. Nevertheless, to convey the nature of the quasi-isometry of Theorem \ref{thm:QI}
  we will briefly discuss
 some features of the metric $d_{WP}$. For details the reader is referred to \cite{Brockpants} and references provided therein.
 
 Let ${\mathbb H}^2=\{z\in {\mathbb C}, \  {\rm Im}(z)>0 \}$ denote the upper-half complex plane with the \emph{Poincar\' e metric}.
By  uniformization,  every $X\in \TS$ becomes  an 1-dimensional complex manifold  (a Riemann surface) as quotient of ${\mathbb H}^2$  by a Fuchsian group.
The space $\TS$ has the structure of a $3g-3$ complex manifold.

The space of holomorphic quadratic differential forms of type $\phi(z) dz^2$ on $X$, denoted by $Q(X)$, has a natural identification with the holomorphic co-tangent 
space of $\TS$ at $X$. One can define a Hermitian inner product on $Q(X)$ by integrating over $X$, using the hyperbolic metric of $X$ which induces a Hermitian inner product on the tangent space at $X$. Since we have such an inner product for each $X\in \TS$ we obtain a Riemannian metric,
the Weil-Petersson. Let $\dwp:  \TS \times \TS\longrightarrow [0, \infty)$ denote the associated distance of this metric.
It is known that $\M$ acts by isometries on $(\TS, \dwp)$.

By a result of Bers  there is a constant $L>0$ (Ber's constant)
only depending on the surface $\s$, so that given $X\in \TS$ there is a pants decomposition $P_X\in \CV$ such that
for each curve $\gamma \in P_X$ the length  of $\gamma$ in the hyperbolic metric of $X$, say $l_X(\gamma)$, is less that $L$.
This implies that the sets $\{V(P)\}_{P\in \CV}$, where
\begin{equation}
V(P):=\{ X\in \TS\  | \ \underset{\gamma\in P} {\max} \{l_X(\gamma)\}<L\},
\label{levelsets}
\end{equation}
cover the Teichm\"uller space $\TS$.

\begin{theorem} \label{thm:pathPW}{ \rm (\cite[Theorem 3.2]{Brockpants})}  An embedding $g: \CV \longrightarrow \TS$, with  $g(P)\in V(P)$, for all $P\in\CV$,  is a quasi-isometry of 
$(\CV, \ d_{\pi})$
to $(\TS, \ \dwp)$.

\end{theorem}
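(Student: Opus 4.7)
The plan is to verify both directions of the quasi-isometry for any embedding $g\co \CV \to \TS$ with $g(P) \in V(P)$: that $g(\CV)$ is quasi-dense in $\TS$, and that $g$ is a quasi-isometric embedding. For quasi-density, Bers' theorem guarantees $\TS = \bigcup_{P\in\CV} V(P)$, and since $g(P) \in V(P)$ for each $P$, it suffices to bound the $d_{WP}$-diameter of each cell $V(P)$ uniformly in $P$. I would do this using Wolpert's formulas for the WP metric in Fenchel--Nielsen coordinates adapted to $P$: the length coordinates are bounded above by the Bers constant $L$, and the twist coordinates may be reduced modulo the action of the Dehn twist subgroup stabilizing $P$, so the resulting region has bounded $d_{WP}$-volume and in particular bounded $d_{WP}$-diameter depending only on $\s$.

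For the upper inequality $d_{WP}(g(P), g(Q)) \leq A\, d_\pi(P, Q) + B$ it is enough, by chaining along a shortest path in $\CE$ and the triangle inequality, to produce a constant $K=K(\s)$ with $d_{WP}(g(P), g(Q)) \leq K$ whenever $P, Q$ differ by a single elementary move. In that case $P$ and $Q$ coincide outside a subsurface $S_0 \subset \s$ of complex dimension one, and I would build an explicit piecewise-smooth WP path from $g(P)$ to $g(Q)$ that first pinches the exchanged curve of $P$ towards the WP boundary, then re-emerges into the adjacent cell where the replacement curve of $Q$ becomes short. Wolpert's asymptotic expansion $ds^2_{WP} \sim dl^2 + l^3 \, d\tau^2$ near a pinching locus ensures that such a path has finite and uniformly bounded WP length.

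For the lower inequality, given $X = g(P),\, Y = g(Q)$ and $L := d_{WP}(X,Y)$, I would take a unit-speed WP geodesic between $X$ and $Y$, sample points $X = X_0, X_1, \ldots, X_N = Y$ with $N = \lceil L \rceil$ and $d_{WP}(X_i, X_{i+1}) \leq 1$, and at each $X_i$ choose a Bers pants decomposition $P_i \in V(X_i)$ with $P_0 = P$, $P_N = Q$. By the triangle inequality in $\CE$,
\[
d_\pi(P, Q) \leq \sum_{i=0}^{N-1} d_\pi(P_i, P_{i+1}),
\]
so the desired bound $d_\pi(P,Q) \leq A' d_{WP}(X,Y) + B'$ reduces to the uniform estimate $d_\pi(P_i, P_{i+1}) \leq C(\s)$ whenever $d_{WP}(X_i, X_{i+1}) \leq 1$.

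The main obstacle is establishing this last uniform bound on nearby Bers decompositions. I would attack it using three ingredients: (i) Wolpert's differential inequality $|d\sqrt{l_\gamma}|_{WP} \leq C_{WP}$, which along a WP segment of length at most $1$ forces every curve of $P_i$ to have length at $X_{i+1}$ bounded by some $L'(\s)$, so that $P_i$ and $P_{i+1}$ are simultaneously \emph{bounded-length} pants decompositions of $X_{i+1}$; (ii) the collar lemma applied on $X_{i+1}$, which converts bounded lengths into a uniform bound on the geometric intersection number $I(P_i, P_{i+1})$ depending only on $L'$; (iii) a direct combinatorial argument, in the spirit of the subsurface projections of Masur--Minsky, showing that two pants decompositions of uniformly bounded geometric intersection number are a uniformly bounded distance apart in $\CE$. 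Step (iii), which must be carried out without appealing to Proposition \ref{pro:pathintersection} to avoid circularity, is the technical heart of the argument.
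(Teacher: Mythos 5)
The paper itself does not prove this statement---it is quoted from Brock \cite[Theorem 3.2]{Brockpants}---so the comparison is with Brock's argument, and your outline does follow his overall strategy (uniformly bounded Bers regions $V(P)$ for quasi-density and the upper bound; sampling a Weil--Petersson geodesic, Wolpert's length control and the collar lemma for the lower bound). The decisive gap is your step (iii): converting a uniform bound on $I(P_i,P_{i+1})$ into a uniform bound on $\dpa(P_i,P_{i+1})$. That statement is exactly Proposition \ref{pro:pathintersection}, i.e.\ Brock's Lemma 3.3 resting on Masur--Minsky \cite[Theorem 6.12]{MaMi} (or Choi--Rafi), and it is where all of the subsurface-projection/hierarchy machinery lives; declaring it ``the technical heart'' and gesturing at ``a direct combinatorial argument in the spirit of Masur--Minsky'' leaves the proof unfinished precisely at its hardest point. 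Note also that your worry about circularity is misplaced: in this paper Proposition \ref{pro:pathintersection} is established independently of the quasi-isometry you are proving, so you could simply quote it, at which point your lower-bound argument (Wolpert's bound $\|\nabla \sqrt{l_\gamma}\|_{WP}\le c$ giving $l_{X_{i+1}}(\gamma)\le L'$ for $\gamma\in P_i$, then the collar lemma giving $I(P_i,P_{i+1})\le C(L')$) is sound.

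Two further points need repair. First, the elementary-move estimate as described would fail: if $\alpha\in P$ is exchanged for $\beta$ with $i(\alpha,\beta)\in\{1,2\}$, then near the locus where $\alpha$ is pinched the collar lemma forces $l(\beta)\to\infty$, so there is no ``re-emerging into the adjacent cell where the replacement curve becomes short'' next to that stratum; the leg of your path from the $\alpha$-pinched region back to $\{l(\beta)\le L\}$ is exactly the quantity to be bounded, and the expansion $dl^2+l^3\,d\tau^2$ only controls the $(l_\alpha,\tau_\alpha)$ directions. The standard fixes are either to choose a structure on the complexity-one piece where $\alpha$ and $\beta$ are simultaneously short (below a constant depending only on $\s$), or to observe that elementary-move pairs fall into finitely many $\M$-orbits, that $\M$ acts by $\dwp$-isometries, and that $V(\phi(P))=\phi(V(P))$. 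Second, in your quasi-density step, ``bounded WP-volume and in particular bounded diameter'' is a non sequitur; the correct (and true) statement is that $V(P)$ has uniformly bounded $\dwp$-diameter, for instance because Wolpert's estimate $d_{WP}(X,\mathcal{S})\le\sqrt{2\pi\sum_{\alpha\in P} l_X(\alpha)}$ places every $X\in V(P)$ within distance $\sqrt{2\pi(3g-3)L}$ of the unique maximally noded surface $\mathcal{S}$ obtained by pinching all of $P$, whose components are rigid thrice-punctured spheres.
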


Theorem \ref{thm:PathTQFT}, implies that any embedding $g: \CV \longrightarrow \TS$ as in Theorem \ref{thm:pathPW}, is also quasi-isometry of $\CV$
with the quantum metric $\dn$
 to  $\TS$ with $ \dwp$. Hence we have:
\begin{theorem} \label{thm:WPQT} There are constants $A_1, A_2>0$, only depending on $\s$,
such that 

$${\frac{1}{A_1}}\dn(P_X, P_Y)-A_2\leq \dwp(X, Y) \leq A_1 \dn(P_X, \  P_Y) +A_2,$$
for any $X, Y \in \TS$.
\end{theorem}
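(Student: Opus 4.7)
The plan is to chain together Brock's theorem (Theorem \ref{thm:pathPW}), which quasi-isometrically identifies $(\CV, d_\pi)$ with $(\TS, \dwp)$ via any embedding $g\colon \CV \to \TS$ satisfying $g(P) \in V(P)$, with Theorem \ref{thm:PathTQFT}, which says that $d_\pi$ and $\dn$ are quasi-isometric on $\CV$. Fixing such an embedding $g$ and composing the two quasi-isometries, I would extract constants $A', B' > 0$, depending only on $\Sigma$, such that
$$\tfrac{1}{A'}\dn(P, Q) - B' \leq \dwp(g(P), g(Q)) \leq A' \dn(P, Q) + B'$$
for all $P, Q \in \CV$.

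The remaining step is to pass from the comparison at $g(P_X), g(P_Y)$ to the original points $X, Y$. Since $X$ and $g(P_X)$ both lie in the Bers region $V(P_X)$ defined in \eqref{levelsets}, the key auxiliary ingredient is a uniform upper bound $D = D(\Sigma)$ on the Weil--Petersson diameter of every set $V(P)$. This is a standard feature of the Bers regions, and is moreover implicit in Theorem \ref{thm:pathPW} itself: if $\mathrm{diam}_{WP}(V(P))$ were unbounded as $P$ ranges over $\CV$, then two different choices of section $g_1, g_2$ with $g_i(P) \in V(P)$ could realize arbitrarily large $\dwp(g_1(P), g_2(P))$, contradicting the fact that both are quasi-isometries from $(\CV, d_\pi)$ onto the same target.

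With the bound $\dwp(X, g(P_X)) \leq D$ in hand, a pair of triangle inequalities gives
$$\dwp(X, Y) \leq \dwp(X, g(P_X)) + \dwp(g(P_X), g(P_Y)) + \dwp(g(P_Y), Y) \leq 2D + \dwp(g(P_X), g(P_Y))$$
and, symmetrically, $\dwp(g(P_X), g(P_Y)) \leq 2D + \dwp(X, Y)$. Substituting these into the two-sided estimate for $g$ displayed above yields the desired inequality with $A_1 := A'$ and $A_2 := B' + 2D$, both depending only on the topology of $\Sigma$. The whole argument is a routine composition of already-established results; the only external input needed is the uniform diameter bound on the Bers regions $V(P)$, which is the only step where one must step slightly outside the material recalled in the excerpt, but it is essentially built into Brock's framework and so presents no real obstacle.
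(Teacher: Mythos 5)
Your overall route is the same as the paper's: the paper simply composes Theorem \ref{thm:pathPW} with Theorem \ref{thm:PathTQFT}, noting that any embedding $g$ as in Theorem \ref{thm:pathPW} is then also a quasi-isometry of $(\CV,\dn)$ to $(\TS,\dwp)$, and you make explicit the remaining passage from $g(P_X),g(P_Y)$ to $X,Y$, which the paper leaves implicit. That bridging step --- a uniform bound $D=D(\s)$ on $\mathrm{diam}_{WP}(V(P))$ plus two triangle inequalities, giving $A_1=A'$ and $A_2=B'+2D$ --- is the right way to finish, and the fact you invoke is indeed true and standard.

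One caution, though: your proposed justification of the diameter bound does not work as stated. Two quasi-isometries of $(\CV,d_{\pi})$ onto the same target need not be uniformly close to one another (on $\RR$, the identity and $x\mapsto -x$ are isometries onto the same space but are not at bounded distance), so the existence of two sections $g_1,g_2$ with $g_i(P)\in V(P)$ and $\dwp(g_1(P),g_2(P))$ arbitrarily large would not, by itself, contradict Theorem \ref{thm:pathPW}. The correct source for the bound is Wolpert's pinching estimate: every $X\in V(P)$ lies within WP-distance $\sqrt{2\pi(3g-3)L}$ of the stratum of the WP completion where all curves of $P$ are pinched, whence $\mathrm{diam}_{WP}(V(P))\leq 2\sqrt{2\pi(3g-3)L}$; this is precisely the input used in \cite{Brockpants} to prove Theorem \ref{thm:pathPW}, so it should be cited from there rather than deduced from the theorem's statement. (Alternatively, if one uses that the constants in Theorem \ref{thm:pathPW} are uniform over all admissible embeddings, as in Brock's formulation, one can choose $g$ adapted to the given pair, e.g.\ $g(P_X)=X$ and $g(P_Y)=Y$, handling the degenerate case $P_X=P_Y$ by comparing through a vertex adjacent to $P_X$.) With either repair your argument is complete and yields constants depending only on $\s$.
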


\subsection{Relations to volumes of fibered 3-manifolds} Here we discuss relations between the quantum metric $\dn$ and hyperbolic volumes of 3-manifolds.
To state our result  we need the following definition.

\begin{definition}\label{tl}
Given a metric $(X, d)$ and an isometry $\phi: X\longrightarrow X$, the {\emph{translation length}} 
is defined by
$$L_d(\phi)={\rm{inf}} \{ d(\phi(x), x)\ \ | x\in X\}.$$

\end{definition}

Since $\M$ acts by isometries on $(\CE, \dn),$ given
$\phi \in \M$ we can define the translation length
$L^{\dn}(\phi)$. Similarly, we can define
$L^{d_{\pi}}(\phi)$. Also since $\M$ is known to act by isometries on  $({\mathcal T}, d_{WP}),$ and we can define $L^{d_{WP}}(\phi)$.

Given $\phi \in \mathrm{Mod}(\Sigma)$, 
let $M_{\phi}=F \times [0,1]/_{(x,1)\sim ({ {\phi}}(x),0)}$ be the mapping torus of $\phi$. 
By work of Thurston \cite{thurston:hypII}, a mapping class $\phi \in \M$ is pseudo-Anosov  if and only if  the  mapping torus $M_{\phi}$ is a hyperbolic 3-manifold.
By Mostow rigidity, the hyperbolic metric 
is essentially unique and the hyperbolic volume $\vol(M_{\phi})$ is a topological invariant of $M_{\phi}$.

\begin{named}{Theorem \ref{thm:volume}}
There exist  a positive  constant $N$, depending only on the topology of the surface $\s$,  so that for any  pseudo-Anosov mapping class $\phi \in \M$
 we have
 
 $${\frac{1}{N}}\ L^{\dn}(\phi)\leq \vol(M_{\phi}) \leq N\   L^{\dn},$$
 \end{named} 
 
  \begin{proof} 
  A result of Brock \cite[Theorem 1.1]{Brockvol} states that  there is a constant  $k>0$ only depending  on the topology of $\s$ so that
  \begin{equation}
 {\frac{1}{k}\ }L^{d_{WP}}(\phi)\leq \vol(M_{\phi}) \leq k \ L^{d_{WP}}(\phi).
 \label{WPV}
 \end{equation}
   By Theorem \ref{thm:WPQT} the space
   $(\CV, \dn)$ is quasi-isometric to $({\mathcal T}, \dwp)$. Thus, after re-adjusting the constants of the quasi-isometry and Definition \ref{tl}, we have 
   $ {\frac{1}{M}}L^{\dn}(\phi)\leq L^{d_{WP}}(\phi) \leq M L^{\dn}(\phi)$, which combined with
 \eqref{WPV} gives the result.
 \end{proof}

 To conclude this section, we remark that there exist several open conjectures  predicting strong relations of asymptotic aspects of the $SU(2)$-Witten-Reshetikhin-Turaev TQFT
 with hyperbolic geometry, and partial verifications of these conjectures \cite{MR2797089, yangsurvey}.
 
 \subsection{Nielsen-Thurston type detection} 
 By the  Nielsen-Thurston classification mapping classes are divided into three types: \emph{periodic, reducible} and
\emph{pseudo-Anosov}.  Furthermore, the type of $f$ determines the geometric structure, in the sense of Thurston, of the 3-manifold obtained as mapping torus of $f.$
The AMU conjecture, due to Andersen, Masbaum and Ueno  \cite{AMU} asserts that the   large-$r$ asymptotic behavior of 
$SU(2)$-quantum mapping class group representations $\{\rho_r\}_{r\geq 3}$ detects pseudo-Anosov elements. For example, it states that
for any  pseudo-Anosov mapping class $\phi\in \M$, the image $\rho_r(\phi)$ has infinite order for all but finitely many $r.$ Furthermore, in the case of the $4$-holed sphere \cite{AMU} (or the punctured torus in \cite{San12}), it has been observed that the spectral radii of the images $\{\rho_r\}_{r\geq 3}$
also determine the \emph{stretch factor} of $\phi$. 
 
 Our results in this subsection are, in the broad sense of things, similar in spirit to the AMU conjecture. The first result, which is  Corollary \ref{cor:content} stated in the introduction,
sheds some light on the geometric content of the complexity $n(P, \ \phi(P))$ for
 pseudo-Anosov $\phi \in \M$. 
   
  \begin{named}{Corollary \ref{cor:content}}There  is a constant $N>0$, only depending  on the topology of $\s$, so that for any pseudo-Anosov  mapping class $\phi \in \M$ and any $P\in \CV$ we have
$$n(P, \phi(P)) \geq N \  \vol(M_{\phi}).$$
 \end{named}
\begin{proof} By Theorem \ref{thm:volume} and the definition of $L^{\dn}(\phi)$, there is a positive constant $N$ only depending  on the topology of $\s$, such that
$$\dn(P, \ \phi(P))\geq  N \ L^{\dn}(\phi),$$
for any $P\in \CV$. 
On the other hand, by Lemma \ref{lem:bound}, we have   $$\nu(P, \phi(P))\geq \dn(P, \phi(P)),$$ 
where $\nu(P, \phi(P))= \min ( n( P, \phi(P)), \  n(\phi(P), P))$, and the desired result follows.
\end{proof}

The next result uses Theorem
\ref{thm:inequalitiesgeneral} to derive a characterization of pseudo-Anosov mapping classes in terms of the behavior of quantum intersection numbers
under iteration in $\M$.

\begin{corollary} \label{cor:detect} A mapping class $\phi\in \M$ is  pseudo-Anosov if and only if for any multicurve  $\gamma$, 
we have $ \underset{k\to\infty}{\lim}n(\phi^{k}(\gamma),P)=\infty$, for all $P\in
 \CV$.
\end{corollary}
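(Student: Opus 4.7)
The plan is to reduce the corollary to the analogous statement about geometric intersection numbers, then invoke standard Nielsen--Thurston theory. First, by Theorem \ref{thm:inequalitiesgeneral}, for any multicurve $\delta$ and any $P \in \CV$ we have
$$\frac{I(\delta,P)}{3g-3}\leq n(\delta,P)\leq (I(\delta,P)+1)^{3g-3}-1,$$
so $n(\phi^k(\gamma),P)\to \infty$ as $k\to\infty$ if and only if $I(\phi^k(\gamma),P)\to \infty$. It therefore suffices to prove the following well-known characterization: $\phi\in \M$ is pseudo-Anosov if and only if $I(\phi^k(\gamma),P)\to \infty$ as $k\to\infty$ for every essential multicurve $\gamma$ and every $P\in \CV$.

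For the forward direction, suppose $\phi$ is pseudo-Anosov with stretch factor $\lambda>1$ and invariant unstable measured lamination $\mathcal{L}^u$. By Thurston's theory, $\mathcal{L}^u$ is filling, so $i(\mathcal{L}^u,\alpha)>0$ for every essential simple closed curve $\alpha$; moreover, for every essential multicurve $\gamma$ there is a constant $c(\gamma)>0$ such that $\lambda^{-k}\phi^k(\gamma)\to c(\gamma)\mathcal{L}^u$ in the space of measured laminations. By continuity of geometric intersection, for any curve $\alpha\in P$ we get
$$I(\phi^k(\gamma),\alpha)=\lambda^k c(\gamma)\, i(\mathcal{L}^u,\alpha)(1+o(1))\longrightarrow \infty,$$
hence $I(\phi^k(\gamma),P)\geq I(\phi^k(\gamma),\alpha)\to \infty$, as required.

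For the converse, I would use the Nielsen--Thurston trichotomy to exhibit a counterexample when $\phi$ is not pseudo-Anosov. If $\phi$ is periodic of order $N$, then for any essential multicurve $\gamma$ and any $P$, the subsequence $I(\phi^{kN}(\gamma),P)=I(\gamma,P)$ is constant in $k$, so $I(\phi^k(\gamma),P)\not\to\infty$. If $\phi$ is reducible, there is an essential multicurve $\gamma_0$ with $\phi(\gamma_0)=\gamma_0$ as isotopy classes; extending $\gamma_0$ to a pants decomposition $P_0\supset \gamma_0$ yields $I(\phi^k(\gamma_0),P_0)=I(\gamma_0,P_0)=0$ for all $k$. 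In either case the divergence condition fails, which completes the proof.

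The argument is essentially bookkeeping once Theorem \ref{thm:inequalitiesgeneral} is available. The only substantive input beyond the trichotomy is the attracting dynamics $\lambda^{-k}\phi^k(\gamma)\to c(\gamma)\mathcal{L}^u$ on measured lamination space together with the filling property of $\mathcal{L}^u$, both of which are classical consequences of Thurston's work. The main thing to watch for is ensuring the constant $c(\gamma)$ is strictly positive for \emph{every} essential multicurve $\gamma$, which follows from the fact that the transverse measure of $\mathcal{L}^s$ pairs positively with every essential multicurve since $\mathcal{L}^s$ is also filling.
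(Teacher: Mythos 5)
Your proposal is correct and follows essentially the same route as the paper: the forward direction combines the lower bound of Theorem \ref{thm:inequalitiesgeneral} with the classical fact that $I(\phi^{k}(\gamma),P)\to\infty$ for pseudo-Anosov $\phi$ (the paper simply cites \cite[Theorem 12.2]{FLP}, where you re-derive it from the attracting dynamics on measured laminations), and the converse uses the Nielsen--Thurston package to produce an invariant multicurve. The only cosmetic difference is in the converse: you split into the periodic and reducible cases and pass through the geometric intersection number via the upper bound of Theorem \ref{thm:inequalitiesgeneral}, whereas the paper observes in one stroke that a non-pseudo-Anosov $\phi$ has a power fixing some multicurve $\gamma$ componentwise, so the sequence $n(\phi^{k}(\gamma),P)$ itself is bounded by isotopy invariance, with no need for the upper bound.
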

\begin{proof} By Theorem \ref{thm:inequalitiesgeneral},  for any $\gamma$  we have $n(\phi^{k}(\gamma), P)\geq \frac{I(\phi^{k}(\gamma),P)}{3g-3}$, for all $P\in \CV$.
If $\phi$ is  pseudo-Anosov, it is known that $ \underset{k\to\infty}{\lim}I(\phi^{k}(\gamma),P)=\infty$ by \cite[Theorem 12.2]{FLP}. 

For the converse, we argue that if $\phi$  is not pseudo-Anosov, there is a  multicurve  $\gamma$ such that, for any $P\in \CV$, the sequence
$\{n(\phi^{k}(\gamma), P)\}_{k\geq 1}$ is bounded.
To that end,  recall that, by definition, if $\phi$  is not pseudo-Anosov, then there is a  multicurve  $\gamma$, and an integer $k_1>0$, such that
$\phi^{k_1}$ leaves $\gamma$ fixed pointwise. It follows that, for any $0<k\in \NN$,   $n(\phi^{k}(\gamma), P)\leq B<\infty$, where $B:=\underset{1\leq k\leq k_1}{\max}n(\phi^{k}(\gamma), P)$.
\end{proof}

\begin{remark} {\rm In fact, it is known that if $\phi\in \M$ is not periodic then we can find a simple closed curve $\gamma\subset \s$ so that $I(\phi^{k}(\gamma), \gamma)$ increases with $k$.
Completing $\gamma$ to a pants decomposition and applying
Theorem \ref{thm:inequalitiesgeneral}, $ \underset{k\to\infty}{\lim}n(\phi^{k}(\gamma),P)=\infty$. Thus, we can characterize periodic mapping classes as follows:
$\phi$ is periodic  if and only if for any multicurve  $\gamma$, 
we have $ \underset{k\to\infty}{\lim}n(\phi^{k}(\gamma),P)<\infty$, for all $P\in
 \CV$.}

\end{remark}

We continue with the following corollary:

\begin{corollary}\label{cor:stretch} Let $\phi\in \M$ be a  pseudo-Anosov mapping class with stretch factor $\lambda_{\phi}$. 
Then, for any  $P\in \CV$
and  any simple closed  curve $\gamma$, we have
 
 $$\lambda_{\phi} \: \leq \: \underset{k\to\infty}{\liminf} \sqrt[k]{n(\phi^{k}(\gamma),P)} \: \leq \underset{k\to\infty}{\limsup} \sqrt[k]{n(\phi^{k}(\gamma),P)} \: \leq \: \lambda_{\phi}^{3g-3}.$$
\end{corollary}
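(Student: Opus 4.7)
The plan is to sandwich $n(\phi^k(\gamma),P)$ between the two sides of Theorem \ref{thm:inequalitiesgeneral}, take $k$-th roots, and let $k\to\infty$, where the classical asymptotic behavior of geometric intersection numbers under iteration of a pseudo-Anosov map will provide the desired limits.

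First I would apply Theorem \ref{thm:inequalitiesgeneral} to the multicurve $\phi^k(\gamma)$ and the pants decomposition $P$, obtaining
$$\frac{I(\phi^k(\gamma),P)}{3g-3}\leq n(\phi^k(\gamma),P)\leq \bigl(I(\phi^k(\gamma),P)+1\bigr)^{3g-3}-1.$$
Extracting $k$-th roots gives
$$\left(\frac{I(\phi^k(\gamma),P)}{3g-3}\right)^{1/k}\leq \sqrt[k]{n(\phi^k(\gamma),P)}\leq \bigl(I(\phi^k(\gamma),P)+1\bigr)^{(3g-3)/k},$$
so both inequalities in the corollary reduce to computing $\lim_{k\to\infty} I(\phi^k(\gamma),P)^{1/k}$.

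The key ingredient is the classical fact that for a pseudo-Anosov $\phi$ with stretch factor $\lambda_\phi$ and any essential simple closed curves $\alpha,\beta$ on $\Sigma$,
$$\lim_{k\to\infty}\frac{1}{k}\log I(\phi^k(\alpha),\beta)=\log\lambda_\phi;$$
this follows from the theory of invariant measured foliations in Thurston's classification (see \cite{FLP}). Writing $I(\phi^k(\gamma),P)=\sum_{\alpha\in P}I(\phi^k(\gamma),\alpha)$, this sum of $3g-3$ terms each satisfies the asymptotic above, so
$$\lim_{k\to\infty} I(\phi^k(\gamma),P)^{1/k}=\lambda_\phi.$$
Combining with the displayed sandwich and letting $k\to\infty$ yields
$$\lambda_\phi\leq \liminf_{k\to\infty}\sqrt[k]{n(\phi^k(\gamma),P)}\leq\limsup_{k\to\infty}\sqrt[k]{n(\phi^k(\gamma),P)}\leq \lambda_\phi^{3g-3},$$
which is the required inequality.

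The only nontrivial point is the statement $\lim_{k\to\infty} I(\phi^k(\gamma),\alpha)^{1/k}=\lambda_\phi$ for essential $\gamma,\alpha$, which I would simply cite from \cite{FLP}; everything else is a direct manipulation of Theorem \ref{thm:inequalitiesgeneral}. I do not expect any genuine obstacle: the lower bound in Theorem \ref{thm:inequalitiesgeneral} loses only a factor of $3g-3$, which disappears under the $k$-th root, and the upper bound loses only an exponent $3g-3$, which accounts precisely for the asymmetry $\lambda_\phi$ versus $\lambda_\phi^{3g-3}$ in the two sides of the corollary.
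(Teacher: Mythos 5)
Your proposal is correct and follows essentially the same route as the paper: sandwich $n(\phi^k(\gamma),P)$ between the two bounds of Theorem \ref{thm:inequalitiesgeneral} applied to $\phi^k(\gamma)$, invoke \cite[Theorem 12.2]{FLP} for the exponential growth rate of geometric intersection numbers, and take $k$-th roots. The only cosmetic difference is that you control $I(\phi^k(\gamma),P)$ by summing $I(\phi^k(\gamma),\alpha)$ over all $\alpha\in P$, whereas the paper works with the maximizing curve $\alpha(k)\in P$ and passes to finitely many subsequences on which this curve is constant; both devices accomplish the same thing and yours is, if anything, slightly more direct.
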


 \begin{proof} By  \cite[Theorem 12.2]{FLP}, for any simple closed curve $\beta$ we have
 \begin{equation}\label{eq:fm}
 \underset{k\to\infty}{\lim} \sqrt[k]{I(\phi^{k}(\gamma),\beta)}=\lambda_{\phi}.
 \end{equation}
  
Given $\gamma$ and $P$, for any $k>0$,  pick  $\alpha(k) \in P$ so that   $ \underset{\alpha \in P}{\max}\{I(\phi^{k}(\gamma), \alpha)\}=I(\phi^{k}(\gamma), \alpha(k))$.
Since $ I(\phi^{k}(\gamma), \alpha(k))\leq I(\phi^{k}(\gamma),P)\leq (3g-3) I(\phi^{k}(\gamma), \alpha(k)),$
Theorem
\ref{thm:inequalitiesgeneral} gives

\begin{equation}\label{eq:fm1}
\frac{I(\phi^{k}(\gamma),  \alpha(k))}{3g-3}\leq  n(\phi^{k}(\gamma),P)  \leq  (6g-6)^{3g-3} \  I(\phi^{k}(\gamma), \alpha(k))^{3g-3}.
 \end{equation}
 We can split $\NN$ into finitely many  subsequences so that for each of them, say $\bf{s}$, for all $k\in \bf{s}$ the curve $ \alpha(k)$ of \eqref{eq:fm1} is the same curve $\beta \in P$.
 Then, using \eqref{eq:fm} and \eqref{eq:fm1}, for each of the subsequences the result follows.
 \end{proof}

 We close the section with the proof of Corollary \ref{amu} which we restate here.

\begin{named} {Corollary \ref{amu}}
Let $\phi$ be a  pseudo-Anosov mapping class such that there is  a multicurve $\gamma$ and $P\in \CV$
so that  $\underset{m\to\infty}{\limsup} \sqrt[m]{r_0(\phi^{m}(\gamma),P)} =l< \infty$. Suppose that $\phi$ fails the AMU conjecture. That is, there exists
an infinite sequence of positive integers $\{m_r\}_{r\to \infty}$ such that
$\rho_r((\phi)^{m_r})=1$.  Then,  $\underset{m_r\to\infty}{\limsup} \sqrt[m_r]{r} <l$.
\end{named}
\begin{proof}
 Suppose 
 there is an infinite sequence of positive integers $\{m_r\}_{r\to \infty}$ such that
$\rho_r((\phi)^{m_r})=1$. As in the proof of Proposition \ref{prop:MCGaction}, 
$T_r^{{\phi^{m_r}}(\gamma)}=T_r^{\gamma}$.   This implies that as $r\to \infty$, the matrix $A(\phi,\ m_r)$, representing  $T_r^{{\phi^{m_r}}(\gamma)}$ in the basis ${\mathcal B}_P$, is equal to the matrix  $A(\gamma,r)$ representing $T_r^{\gamma}$. For $r$ large enough, the matrix $A(\gamma,r)$
has $n(\gamma, P)$ non main diagonals containing non-zero entries.
Since $\phi$ is pseudo-Anosov,  by Corollary \ref{cor:detect}, the quantum intersection number $n(\phi^{m_r}(\gamma), P)$ increases as $m_r$ does.
Thus, for $r$ large enough  $A(\phi,\ m_r)$ can equal $A(\gamma,r)$ only if
$r< r_0(\phi^{m_r}(\gamma), P)$, which implies the conclusion. \end{proof}


\section{Norms of curve operators and quantum intersection number} 
\label{sec:norms}

Let the notation and setting be as in Section \ref{sec:two}. Let $\gamma$ be a multicurve and $P$ a pants decomposition
of $\s$.  Consider the curve operators $\{T_r^{\gamma}\}_{r\geq 3}$, and the orthonormal basis ${\mathcal B}_P=\{\phi_{{\bf c}} \ | \ {\bf c}\in U_r\}$ of  the Hermitian spaces $V_r(\Sigma)$ corresponding to $P$.
Let $m_r(\gamma, P)$ denote the number of non-zero entries in the matrix representing $T_r^{\gamma}$ with respect to the basis ${\mathcal B}_P$. Recall that
the quantum  intersection number $n(\gamma, P)$ denotes the number of non-main diagonals containing non-zero entries and this is captured by the number of non-zero functions
$G_{\bf k}^{P}(\frac{c}{r},\frac{1}{r})$ in Equation \eqref{eq:opcoefficients}. We have the following.

\begin{proposition}\label{prop:matrixEntries} For any multicurve $\gamma$ and pants decomposition $P$, for $r$ large enough, 

\begin{equation} \label{eq:mn}
O\big( v_r \ n(\gamma,P)\big)\leq m_r(\gamma, P)\leq v_r\left(n(\gamma, P)+1\right),
\end{equation}
where $v_r:=\dim {V_r(\Sigma)}$ is given by the Verlinde formula \cite{BHMV2}.

\end{proposition}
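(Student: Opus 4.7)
The plan is to decompose $m_r(\gamma, P)$ by grouping matrix entries according to their offset ${\bf k}\colon E \to \ZZ$ from the main diagonal. Writing
\[
m_r(\gamma, P) \; = \; \sum_{\bf k} N_r({\bf k}), \qquad N_r({\bf k}) := \#\big\{{\bf c} \in U_r : {\bf c} + {\bf k} \in U_r,\ \langle T_r^{\gamma}\phi_{\bf c}, \phi_{{\bf c}+{\bf k}}\rangle \neq 0\big\},
\]
Theorem \ref{thm:Asymptotic}(1) identifies each matrix entry with $\pm G_{\bf k}^{\gamma}(\frac{\bf c}{r},\frac{1}{r})$ once $r$ is large enough that $(\frac{\bf c}{r},\frac{1}{r})\in V_{\gamma}$. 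In particular $N_r({\bf k}) = 0$ whenever $G_{\bf k}^{\gamma}\equiv 0$ on $V_\gamma$, so only ${\bf k}$ with $G_{\bf k}^{\gamma}$ not identically zero contribute, and there are at most $n(\gamma, P) + 1$ of these (the $+1$ accounting for the possible main-diagonal term ${\bf k} = 0$). Since trivially $N_r({\bf k}) \leq v_r$, the upper bound $m_r(\gamma, P) \leq v_r(n(\gamma, P) + 1)$ is immediate.

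For the lower bound, I would show that each ${\bf k} \neq 0$ with $G_{\bf k}^{\gamma} \not\equiv 0$ contributes $N_r({\bf k}) \geq c\, v_r$ for some positive constant and $r$ large. Given such ${\bf k}$, expand $G_{\bf k}^{\gamma}(x,\epsilon) = \sum_{j \geq 0} G_{{\bf k},j}(x)\,\epsilon^j$ as a power series in the second variable. Since $G_{\bf k}^{\gamma} \not\equiv 0$, the uniqueness discussion following Theorem \ref{thm:Asymptotic} ensures at least one coefficient $G_{{\bf k},j_0}$ is a non-zero real analytic function on $U$; its zero locus $Z_{\bf k}$ is then a proper real analytic subvariety of $U$, hence of Lebesgue measure zero. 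For $x \in U \setminus Z_{\bf k}$ and $\epsilon$ small, $G_{\bf k}^{\gamma}(x,\epsilon) \neq 0$. Since $\{\frac{\bf c}{r} : {\bf c} \in U_r\}$ is a lattice of spacing $1/r$ equidistributing in $U$ as $r \to \infty$, the proportion of admissible colorings whose image lies in an $\eta$-tube around $Z_{\bf k}$ is $O(\eta)$; choosing $\eta = \eta(r) \to 0$ slowly relative to $1/r$ gives $N_r({\bf k}) = v_r(1 - o(1))$. The boundary condition ${\bf c} + {\bf k} \in U_r$ excludes only $O(v_r/r)$ colorings, a lower-dimensional face effect. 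Summing over the $n(\gamma, P)$ contributing ${\bf k}$ yields $m_r(\gamma, P) \geq c\, v_r \, n(\gamma, P)$.

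The main technical obstacle is the lattice-point estimate: one must bound the number of admissible colorings whose rescaled image $\frac{\bf c}{r}$ falls into a tubular neighborhood of the analytic set $Z_{\bf k}$. This can be handled by a stratification of $Z_{\bf k}$ and a standard volume argument showing its $\eta$-thickening has volume $O(\eta)$, together with equidistribution of the admissible lattice in $U$ (itself a consequence of the Verlinde asymptotics $v_r \sim \mathrm{const}\cdot r^{3g-3}$). A minor subtlety is that although the union of zero sets $\bigcup_{\bf k} Z_{\bf k}$ is taken over all contributing ${\bf k}$, their number is bounded by $n(\gamma, P) + 1$ independently of $r$; hence the $o(v_r)$ error is uniform, and the implicit constant in the lower bound may be taken to depend only on $\gamma$ and $P$.
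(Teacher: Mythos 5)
Your argument is correct and follows essentially the same route as the paper: the upper bound is the same count of at most $n(\gamma,P)+1$ contributing diagonals, and the lower bound rests, as in the paper, on the analyticity of the non-zero coefficient functions $G_{\bf k}^{\gamma}$ from Theorem \ref{thm:Asymptotic} together with a lattice-point count of order $r^{3g-3}\asymp v_r$. The only difference is that the paper takes a shortcut you could also use: since $n(\gamma,P)$ is independent of $r$, it suffices to fix a single small box $V\times[0,h)$ on which all the finitely many non-zero $G_{\bf k}^{\gamma}$ are simultaneously non-vanishing and count the colorings with $\frac{\bf c}{r}\in V$, which avoids your per-diagonal tube/equidistribution estimate around the zero loci.
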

\begin{proof} For any $r\geq 3$, the  upper  inequality follows by  the definitions, where the term $+1$ accounts for the main diagonal, which was discarded in the count for $n(\gamma,P).$ 

There are $n(\gamma,P)$ non-zero functions $G_{\bf k}^P$, and furthermore those functions are analytic on a subset $V_{\gamma} \subset U\times [0,1],$ where $U$ is the open set defined in Section \ref{sec:two}. Since they are non-zero and analytic, we can pick an open subset of $V_{\gamma}$ of the form $V\times [0,h)$ on which all of the functions $G_{\bf k}^P$ are non-vanishing. Then we have that $m_r(\gamma,P)\geq |\frac{1}{r}U_r \cap V|,$ which grows like $r^{3g-3},$ the same as $v_r,$ since it amounts to counting a number of lattice points in the open set $rV.$
\end{proof}
It follows that for $r$ large enough, the quantity $ m_r(\gamma, P)$ admits two-sided bounds in terms of the geometric intersection number $I(\gamma, P)$ and  $v_r.$ 

Next we consider  the $l^1$ and $l^2$-norms of $T_r^{\gamma}$, with respect to ${\mathcal B}_P$,  defined by
$$||T_r^{\gamma}||_{\{l^s, P\}}=\big( \underset{{\bf c}, {\bf d}\in U_r}{\sum}|\langle T_r^{\gamma}\phi_{\bf c} ,\phi_{\bf d} \rangle|^2\big)^{1\over s},$$
for $s=1,2$.
\begin{lemma}\label{lemma:l2norm}The norm $||T_r^{\gamma}||_{\lbrace l^2,P\rbrace}$ is independent of the pants decomposition $P$ and depends only on the orbit of $\gamma$ under the action of $\M$.
\end{lemma}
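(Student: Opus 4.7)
The plan is to observe that the $l^2$-norm as defined is exactly the Hilbert--Schmidt (Frobenius) norm of $T_r^{\gamma}$, since ${\mathcal B}_P$ is orthonormal with respect to the Hermitian pairing on $V_r(\Sigma)$. Concretely,
\[
\|T_r^{\gamma}\|_{\lbrace l^2,P\rbrace}^{\,2}=\sum_{{\bf c},{\bf d}\in U_r}|\langle T_r^{\gamma}\phi_{\bf c},\phi_{\bf d}\rangle|^2=\mathrm{tr}\bigl((T_r^{\gamma})^{*}T_r^{\gamma}\bigr),
\]
and the right-hand side does not involve the choice of basis. Hence the first assertion -- independence from $P$ -- is immediate: for any two pants decompositions $P$ and $P'$, both associated bases ${\mathcal B}_P$ and ${\mathcal B}_{P'}$ are orthonormal, so they produce the same value of $\mathrm{tr}((T_r^{\gamma})^{*}T_r^{\gamma})$.

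For the second assertion, I would invoke the intertwining relation recalled in the proof of Proposition \ref{prop:MCGaction}: for every $\phi\in \M$,
\[
\rho_r(\phi)\,T_r^{\gamma}\,\rho_r(\phi)^{-1}=T_r^{\phi(\gamma)},
\]
where $\rho_r(\phi)$ is a projective unitary on $V_r(\Sigma)$. Since conjugation by a unitary operator preserves the Hilbert--Schmidt norm (the ambiguity by a root of unity phase cancels in $T^*T$), we get
\[
\|T_r^{\phi(\gamma)}\|_{\lbrace l^2,P\rbrace}=\|T_r^{\gamma}\|_{\lbrace l^2,P\rbrace}
\]
for any $P\in\CV$. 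Combined with the first step, this proves that $\|T_r^{\gamma}\|_{\lbrace l^2,P\rbrace}$ depends only on the $\M$-orbit of $\gamma$.

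Both steps are short and rely only on standard properties of the Hilbert--Schmidt norm and the already-recalled behavior of curve operators under the quantum representation, so no serious obstacle is expected; the content of the lemma is really just the basis-invariance of the Frobenius norm together with the unitary equivariance of $\gamma\mapsto T_r^{\gamma}$.
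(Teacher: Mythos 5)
Your proof is correct and essentially the paper's argument: both identify $||T_r^{\gamma}||_{\lbrace l^2,P\rbrace}$ with $\mathrm{Tr}\bigl(T_r^{\gamma}(T_r^{\gamma})^*\bigr)^{\frac{1}{2}}$, which is basis-independent, and then use the intertwining relation $\rho_r(\phi)T_r^{\gamma}\rho_r(\phi)^{-1}=T_r^{\phi(\gamma)}$ for the orbit statement. The only cosmetic difference is that the paper handles the conjugation step via Hermiticity of $T_r^{\gamma}$ and invariance of the trace under conjugation (so no unitarity of $\rho_r(\phi)$ is needed), whereas you invoke projective unitarity of $\rho_r(\phi)$, which is indeed available since, as recalled in the proof of Proposition \ref{prop:MCGaction}, $\rho_r(\phi)$ carries the orthonormal basis ${\mathcal B}_P$ to the orthonormal basis ${\mathcal B}_{\phi(P)}$ up to phases.
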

\begin{proof}
Recall that the $l^2$-norm can also be computed as $||T_r^{\gamma}||_{\lbrace l^2,P\rbrace}=\mathrm{Tr}(T_r^{\gamma}(T_r^{\gamma})^*)^{\frac{1}{2}},$ where $(T_r^{\gamma})^*$ is the adjoint for the natural Hermitian norm on $V_r(\Sigma).$ However, $T_r^{\gamma}$ is an Hermitian operator, hence 
$$\mathrm{Tr}(T_r^{\gamma}(T_r^{\gamma})^*)=\mathrm{Tr}((T_r^{\gamma})^2)=\underset{\lambda \in \mathrm{\sigma(T_r^{\gamma})}}{\sum} |\lambda|^2,$$
where $\sigma(T_r^{\gamma})$ is the set of eigenvalues of $T_r^{\gamma}.$
This expression shows that the $l^2$-norm is independent of $P,$ since a trace is independent of basis, and the $l^2$-norm depends only on the orbit of $\gamma$ under $\mathrm{Mod}(\Sigma)$ since $T_r^{\phi(\gamma)}=\rho_r(\phi)\circ T_r^{\gamma} \circ \rho_r(\phi)^{-1}$ for $\phi \in \mathrm{Mod}(\Sigma).$
\end{proof}
Turning to the  $l^1$-norm, an easy application of
the Cauchy-Schwarz inequality gives
$||T_r^{\gamma}||_{\{l^1, P\}}\leq (m(\gamma, P))^{1/2} ||T_r^{\gamma}||_{l^2}$, which combined with the last inequality and  \eqref{eq:mn},
gives
$$\frac{||T_r^{\gamma}||_{\{ l^1, P\}}}{||T_r^{\gamma}||_{l^2}}\leq (n(\gamma, P)+1)^{1/2} \ v_r^{1/2},$$
and
 \begin{equation}\label{eq:Tn}
 T(\gamma, P):=\underset{r\to \infty}{\limsup} \frac{{||T_r^{\gamma}||_{\{l^1, P\}}}}
{||T_r^{\gamma}||_{l^2}v_r^{1/2}}\leq (n(\gamma, P)+1)^{1/2}.
\end{equation}
Hence, information about the geometric content of $n(\gamma, P)$ translates into information on the geometric content of the $T(\gamma, P)$ that encodes the asymptotic behavior of
the curve operator norms. For example, combining \eqref{eq:Tn} with Corollary
\ref{cor:stretch} we get,
\begin{corollary}
Let $\phi\in \M$ be a  pseudo-Anosov mapping class with stretch factor $\lambda_{\phi}$. 
Then, for any  $P\in \CV$
and  any simple closed  curve $\gamma$, we have
 
 $$ \underset{k\to\infty}{\limsup} \sqrt[k]{T(\phi^{k}(\gamma),P)} \: \leq \: \lambda_{\phi}^{\frac{3g-3}{2}}.$$
\end{corollary}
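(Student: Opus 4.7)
The plan is to combine the bound in inequality~\eqref{eq:Tn} with Corollary~\ref{cor:stretch} in the most direct way possible. The corollary in question concerns the quantity $T(\gamma,P)$, and inequality~\eqref{eq:Tn} already gives the clean upper estimate $T(\gamma,P)\leq (n(\gamma,P)+1)^{1/2}$. Specializing this to $\phi^{k}(\gamma)$ in place of $\gamma$, we obtain
\[
T(\phi^{k}(\gamma),P)\leq \bigl(n(\phi^{k}(\gamma),P)+1\bigr)^{1/2}
\]
for every $k\geq 1$. Taking $k$-th roots reduces the problem to controlling the asymptotic growth of $n(\phi^{k}(\gamma),P)$ as $k\to\infty$.

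At this point I would invoke Corollary~\ref{cor:stretch}, which states that $\limsup_{k\to\infty}\sqrt[k]{n(\phi^{k}(\gamma),P)}\leq \lambda_{\phi}^{3g-3}$. The only thing separating us from the desired conclusion is the extra $+1$ and the square root. For the $+1$, I would use Corollary~\ref{cor:detect}: since $\phi$ is pseudo-Anosov, $n(\phi^{k}(\gamma),P)\to\infty$, so for $k$ large we have $n(\phi^{k}(\gamma),P)+1\leq 2\,n(\phi^{k}(\gamma),P)$, and the factor $2^{1/k}\to 1$ has no effect on the limsup. Extracting the square root contributes a factor of $1/2$ in the exponent: by the elementary fact $\limsup b_k^{1/(2k)}=\bigl(\limsup b_k^{1/k}\bigr)^{1/2}$ when $b_k\to\infty$, we obtain
\[
\limsup_{k\to\infty}\sqrt[k]{T(\phi^{k}(\gamma),P)}\leq \Bigl(\limsup_{k\to\infty}\sqrt[k]{n(\phi^{k}(\gamma),P)+1}\Bigr)^{1/2}\leq \lambda_{\phi}^{(3g-3)/2}.
\]

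Since every step is a routine combination of previously established estimates, I do not anticipate any serious obstacle. The only minor bookkeeping point is to ensure that the absorption of the additive constant $1$ and the passage from $\limsup a_k^{1/k}$ to $\limsup a_k^{1/(2k)}$ are handled carefully — both rely on $n(\phi^{k}(\gamma),P)\to\infty$, which is guaranteed by the pseudo-Anosov hypothesis via Corollary~\ref{cor:detect}.
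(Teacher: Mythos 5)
Your argument is correct and is exactly the route the paper takes: the corollary is stated there as an immediate consequence of combining the bound $T(\gamma,P)\leq (n(\gamma,P)+1)^{1/2}$ from \eqref{eq:Tn} with Corollary \ref{cor:stretch}, applied to $\phi^{k}(\gamma)$. Your bookkeeping for the additive $1$ and the square root is fine (and in fact the $+1$ could be absorbed even without invoking Corollary \ref{cor:detect}, since $\lambda_{\phi}>1$), so there is nothing to add.
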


As another example,  in the case that $\gamma$ is also a pants decomposition, one can directly compare the function $T$ to the metric $\dn$ by means of Proposition \ref{prop:metricproperties}.

Building on our work in Sections \ref{sec:five} and
 \ref{sec:fusion_computations}, in a subsequent paper, 
we will curry out a more detailed  study of the asymptotics of the coefficients $G_{\bf k}^{\gamma}(\frac{c}{r},\frac{1}{r})$ to derive a lower bound of $T(\gamma, P)$ in terms of $n(P, Q)$.


\section{Proofs of auxiliary lemmas} \label{section:auxiliary}

We give the proofs of Lemmas \ref{lemma:sliding} and \ref{lemma:singleArcAnnuli}. In the figures, thin black edges are colored by $2$, while the colors of red edges are specified and the fusion rules applied are the ones of Figure \ref{fig:fusionRules2}.

\begin{named}{Lemma \ref{lemma:sliding}}{{\rm (}\emph{Sliding  Lemma}{\rm )}} With the conventions of Figure \ref{fig:fusionRules2}, and for any $\varepsilon,\mu\in \lbrace \pm 1 \rbrace,$
we have the following identities.
\vskip 0.05in
\begin{center}
\def \svgwidth{.9\columnwidth}

\end{center} 
\end{named}
\begin{proof}
Using the strand rule, then the triangle rule, we get
\begin{center}
\def \svgwidth{.78\columnwidth}
\begingroup%
  \makeatletter%
  \providecommand\color[2][]{%
    \errmessage{(Inkscape) Color is used for the text in Inkscape, but the package 'color.sty' is not loaded}%
    \renewcommand\color[2][]{}%
  }%
  \providecommand\transparent[1]{%
    \errmessage{(Inkscape) Transparency is used (non-zero) for the text in Inkscape, but the package 'transparent.sty' is not loaded}%
    \renewcommand\transparent[1]{}%
  }%
  \providecommand\rotatebox[2]{#2}%
  \newcommand*\fsize{\dimexpr\f@size pt\relax}%
  \newcommand*\lineheight[1]{\fontsize{\fsize}{#1\fsize}\selectfont}%
  \ifx\svgwidth\undefined%
    \setlength{\unitlength}{445.66546102bp}%
    \ifx\svgscale\undefined%
      \relax%
    \else%
      \setlength{\unitlength}{\unitlength * \real{\svgscale}}%
    \fi%
  \else%
    \setlength{\unitlength}{\svgwidth}%
  \fi%
  \global\let\svgwidth\undefined%
  \global\let\svgscale\undefined%
  \makeatother%
  \begin{picture}(1,0.42866575)%
    \lineheight{1}%
    \setlength\tabcolsep{0pt}%
    \put(0,0){\includegraphics[width=\unitlength,page=1]{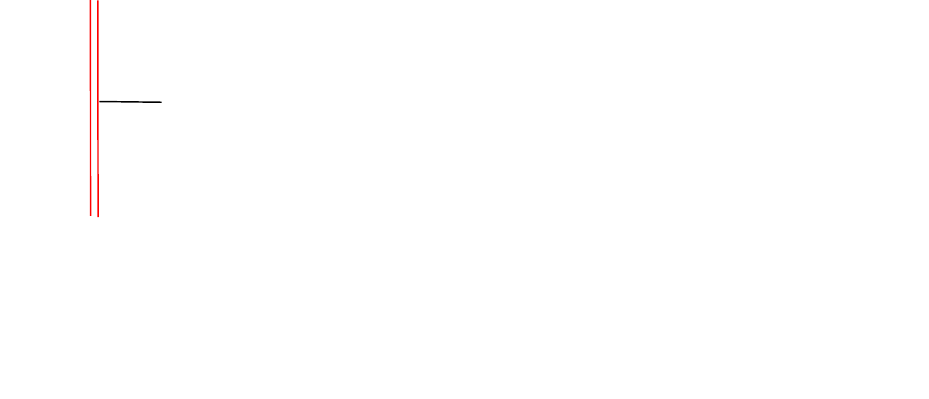}}%
    \put(0.11980607,0.20755644){\color[rgb]{0,0,0}\makebox(0,0)[lt]{\lineheight{1.25}\smash{\begin{tabular}[t]{l}$a$\end{tabular}}}}%
    \put(0.11743355,0.26445698){\color[rgb]{0,0,0}\makebox(0,0)[lt]{\lineheight{1.25}\smash{\begin{tabular}[t]{l}$a+\varepsilon$\end{tabular}}}}%
    \put(0.11683886,0.33949028){\color[rgb]{0,0,0}\makebox(0,0)[lt]{\lineheight{1.25}\smash{\begin{tabular}[t]{l}$a+\varepsilon+\mu$\end{tabular}}}}%
    \put(0.2232171,0.25367066){\color[rgb]{0,0,0}\makebox(0,0)[lt]{\lineheight{1.25}\smash{\begin{tabular}[t]{l}$=$\end{tabular}}}}%
    \put(0,0){\includegraphics[width=\unitlength,page=2]{Slidingproof1.pdf}}%
    \put(0.3864348,0.20664457){\color[rgb]{0,0,0}\makebox(0,0)[lt]{\lineheight{1.25}\smash{\begin{tabular}[t]{l}$a$\end{tabular}}}}%
    \put(0.38406232,0.26354512){\color[rgb]{0,0,0}\makebox(0,0)[lt]{\lineheight{1.25}\smash{\begin{tabular}[t]{l}$a+\varepsilon$\end{tabular}}}}%
    \put(0,0){\includegraphics[width=\unitlength,page=3]{Slidingproof1.pdf}}%
    \put(0.38151842,0.32959007){\color[rgb]{0,0,0}\makebox(0,0)[lt]{\lineheight{1.25}\smash{\begin{tabular}[t]{l}$a+\varepsilon+\mu$\end{tabular}}}}%
    \put(0.37851329,0.40514443){\color[rgb]{0,0,0}\makebox(0,0)[lt]{\lineheight{1.25}\smash{\begin{tabular}[t]{l}$a+\varepsilon+\mu$\end{tabular}}}}%
    \put(0.38031634,0.36814884){\color[rgb]{0,0,0}\makebox(0,0)[lt]{\lineheight{1.25}\smash{\begin{tabular}[t]{l}$a+\varepsilon+\mu+1$\end{tabular}}}}%
    \put(0,0){\includegraphics[width=\unitlength,page=4]{Slidingproof1.pdf}}%
    \put(0.7085355,0.20651431){\color[rgb]{0,0,0}\makebox(0,0)[lt]{\lineheight{1.25}\smash{\begin{tabular}[t]{l}$a$\end{tabular}}}}%
    \put(0.70616292,0.26341486){\color[rgb]{0,0,0}\makebox(0,0)[lt]{\lineheight{1.25}\smash{\begin{tabular}[t]{l}$a+\varepsilon$\end{tabular}}}}%
    \put(0,0){\includegraphics[width=\unitlength,page=5]{Slidingproof1.pdf}}%
    \put(0.70361897,0.32945979){\color[rgb]{0,0,0}\makebox(0,0)[lt]{\lineheight{1.25}\smash{\begin{tabular}[t]{l}$a+\varepsilon+\mu$\end{tabular}}}}%
    \put(0.70061384,0.40501416){\color[rgb]{0,0,0}\makebox(0,0)[lt]{\lineheight{1.25}\smash{\begin{tabular}[t]{l}$a+\varepsilon+\mu$\end{tabular}}}}%
    \put(0.70241694,0.36801858){\color[rgb]{0,0,0}\makebox(0,0)[lt]{\lineheight{1.25}\smash{\begin{tabular}[t]{l}$a+\varepsilon+\mu-1$\end{tabular}}}}%
    \put(0.5321452,0.26357089){\color[rgb]{0,0,0}\makebox(0,0)[lt]{\lineheight{1.25}\smash{\begin{tabular}[t]{l}$-$\end{tabular}}}}%
    \put(0,0){\includegraphics[width=\unitlength,page=6]{Slidingproof1.pdf}}%
    \put(0.33165137,0.01276606){\color[rgb]{0,0,0}\makebox(0,0)[lt]{\lineheight{1.25}\smash{\begin{tabular}[t]{l}$a$\end{tabular}}}}%
    \put(0.32927884,0.06966662){\color[rgb]{0,0,0}\makebox(0,0)[lt]{\lineheight{1.25}\smash{\begin{tabular}[t]{l}$a+\varepsilon+\mu+1$\end{tabular}}}}%
    \put(0.32868418,0.14469992){\color[rgb]{0,0,0}\makebox(0,0)[lt]{\lineheight{1.25}\smash{\begin{tabular}[t]{l}$a+\varepsilon+\mu$\end{tabular}}}}%
    \put(0.14927388,0.09024349){\color[rgb]{0,0,0}\makebox(0,0)[lt]{\lineheight{1.25}\smash{\begin{tabular}[t]{l}$=A$\end{tabular}}}}%
    \put(0.55844913,0.08832338){\color[rgb]{0,0,0}\makebox(0,0)[lt]{\lineheight{1.25}\smash{\begin{tabular}[t]{l}$-B$\end{tabular}}}}%
    \put(0,0){\includegraphics[width=\unitlength,page=7]{Slidingproof1.pdf}}%
    \put(0.73467995,0.01360431){\color[rgb]{0,0,0}\makebox(0,0)[lt]{\lineheight{1.25}\smash{\begin{tabular}[t]{l}$a$\end{tabular}}}}%
    \put(0.73230742,0.07050484){\color[rgb]{0,0,0}\makebox(0,0)[lt]{\lineheight{1.25}\smash{\begin{tabular}[t]{l}$a+\varepsilon+\mu-1$\end{tabular}}}}%
    \put(0.73171276,0.14553816){\color[rgb]{0,0,0}\makebox(0,0)[lt]{\lineheight{1.25}\smash{\begin{tabular}[t]{l}$a+\varepsilon+\mu$\end{tabular}}}}%
  \end{picture}%
\endgroup%

\end{center}
where $A$ and $B$ are constants that now we explain how to compute:
Note that in the identity above, passing from the first to the second  row, we have used triangle moves to eliminate a triangle from each of the  second and the third term of the first row.
However, the
 triangle rules we need here are not the ones
 depicted  in Figure \ref{fig:fusionRules2} since, for instance, the red sides of the triangles eliminated are not colored by the same color.
 Hence to compute $A$, $B$ we need to first
  to apply  the general triangle rules of Figure \ref{fig:fusionRules} to compute the coefficient function,  and then take the limit $r\rightarrow \infty,$ $\frac{a}{r}\rightarrow \theta.$

If $\varepsilon=+1,$ we get $$A=\underset{r\rightarrow\infty,\frac{a}{r}\rightarrow \theta}{\lim} \frac{\langle \frac{2+a+\varepsilon+\mu+1-(a+\varepsilon)}{2} \rangle \langle \frac{2-(a+\varepsilon+\mu+1)+a+\varepsilon}{2} \rangle}{\langle a+\varepsilon+\mu+1\rangle \langle a\rangle}=0,$$ and $$B=\underset{r\rightarrow\infty,\frac{a}{r}\rightarrow \theta}{\lim}-\frac{\langle \frac{2+a+\varepsilon+\mu+a+\varepsilon-1}{2}\rangle \langle \frac{a+\varepsilon+\mu+a+\varepsilon-2-1}{2}\rangle}{\langle a+\varepsilon+\mu-1 \rangle \langle a \rangle}=-1.$$
Since $a+\varepsilon+\mu-1=a+\mu,$ this agrees with the first identity in the statement of the lemma.. Similarly, in the case $\varepsilon=-1,$ we get $A=1$ and $B=0,$ which,  since $a+\varepsilon+\mu+1=a+\mu$ in that case,
agrees with first identity in the statement of the lemma.

Now we explain how to prove the second identity in the statement of the lemma: Using the Kauffman bracket skein rule and keeping in mind that $\underset{r\rightarrow\infty,\frac{a}{r}\rightarrow \theta}{\lim} \zeta_r=-1,$ we get:
\begin{center}
\def \svgwidth{.63\columnwidth}
\begingroup%
  \makeatletter%
  \providecommand\color[2][]{%
    \errmessage{(Inkscape) Color is used for the text in Inkscape, but the package 'color.sty' is not loaded}%
    \renewcommand\color[2][]{}%
  }%
  \providecommand\transparent[1]{%
    \errmessage{(Inkscape) Transparency is used (non-zero) for the text in Inkscape, but the package 'transparent.sty' is not loaded}%
    \renewcommand\transparent[1]{}%
  }%
  \providecommand\rotatebox[2]{#2}%
  \newcommand*\fsize{\dimexpr\f@size pt\relax}%
  \newcommand*\lineheight[1]{\fontsize{\fsize}{#1\fsize}\selectfont}%
  \ifx\svgwidth\undefined%
    \setlength{\unitlength}{656.05498547bp}%
    \ifx\svgscale\undefined%
      \relax%
    \else%
      \setlength{\unitlength}{\unitlength * \real{\svgscale}}%
    \fi%
  \else%
    \setlength{\unitlength}{\svgwidth}%
  \fi%
  \global\let\svgwidth\undefined%
  \global\let\svgscale\undefined%
  \makeatother%
  \begin{picture}(1,0.14357606)%
    \lineheight{1}%
    \setlength\tabcolsep{0pt}%
    \put(0,0){\includegraphics[width=\unitlength,page=1]{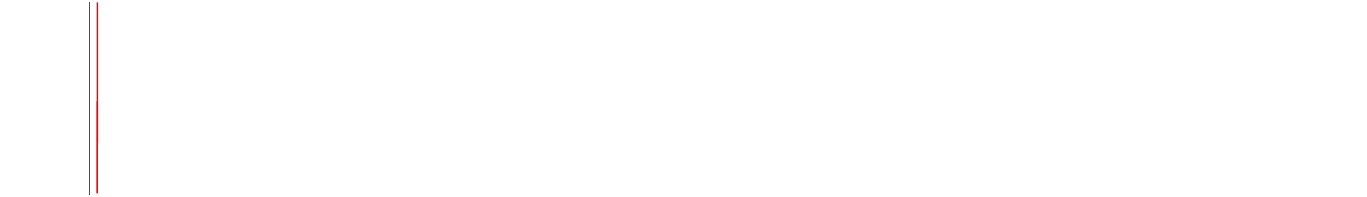}}%
    \put(0.07944731,0.00993292){\color[rgb]{0,0,0}\makebox(0,0)[lt]{\lineheight{1.25}\smash{\begin{tabular}[t]{l}$a$\end{tabular}}}}%
    \put(0.07944737,0.05402766){\color[rgb]{0,0,0}\makebox(0,0)[lt]{\lineheight{1.25}\smash{\begin{tabular}[t]{l}$a+\mu$\end{tabular}}}}%
    \put(0.07977751,0.11666206){\color[rgb]{0,0,0}\makebox(0,0)[lt]{\lineheight{1.25}\smash{\begin{tabular}[t]{l}$a+\varepsilon+\mu$\end{tabular}}}}%
    \put(0,0){\includegraphics[width=\unitlength,page=2]{Slidingproof2.pdf}}%
    \put(0.45840791,0.01108773){\color[rgb]{0,0,0}\makebox(0,0)[lt]{\lineheight{1.25}\smash{\begin{tabular}[t]{l}$a$\end{tabular}}}}%
    \put(0.45840798,0.05518246){\color[rgb]{0,0,0}\makebox(0,0)[lt]{\lineheight{1.25}\smash{\begin{tabular}[t]{l}$a+\mu$\end{tabular}}}}%
    \put(0.45873812,0.11781686){\color[rgb]{0,0,0}\makebox(0,0)[lt]{\lineheight{1.25}\smash{\begin{tabular}[t]{l}$a+\varepsilon+\mu$\end{tabular}}}}%
    \put(0.26617165,0.06832362){\color[rgb]{0,0,0}\makebox(0,0)[lt]{\lineheight{1.25}\smash{\begin{tabular}[t]{l}$=-$\end{tabular}}}}%
    \put(0,0){\includegraphics[width=\unitlength,page=3]{Slidingproof2.pdf}}%
    \put(0.68940701,0.07063323){\color[rgb]{0,0,0}\makebox(0,0)[lt]{\lineheight{1.25}\smash{\begin{tabular}[t]{l}$-$\end{tabular}}}}%
    \put(0,0){\includegraphics[width=\unitlength,page=4]{Slidingproof2.pdf}}%
    \put(0.83303589,0.00877813){\color[rgb]{0,0,0}\makebox(0,0)[lt]{\lineheight{1.25}\smash{\begin{tabular}[t]{l}$a$\end{tabular}}}}%
    \put(0.83303589,0.05287286){\color[rgb]{0,0,0}\makebox(0,0)[lt]{\lineheight{1.25}\smash{\begin{tabular}[t]{l}$a+\mu$\end{tabular}}}}%
    \put(0,0){\includegraphics[width=\unitlength,page=5]{Slidingproof2.pdf}}%
    \put(0.833366,0.11550725){\color[rgb]{0,0,0}\makebox(0,0)[lt]{\lineheight{1.25}\smash{\begin{tabular}[t]{l}$a+\varepsilon+\mu$\end{tabular}}}}%
  \end{picture}%
\endgroup%

\end{center}
Next we can simplify the first figure of the right hand side with the bigon rule. If $\varepsilon=\mu,$ then the first figure vanishes and we get the lemma. On the other hand, if $\varepsilon=-\mu$ we have
\begin{center}
\def \svgwidth{.6\columnwidth}
\begingroup%
  \makeatletter%
  \providecommand\color[2][]{%
    \errmessage{(Inkscape) Color is used for the text in Inkscape, but the package 'color.sty' is not loaded}%
    \renewcommand\color[2][]{}%
  }%
  \providecommand\transparent[1]{%
    \errmessage{(Inkscape) Transparency is used (non-zero) for the text in Inkscape, but the package 'transparent.sty' is not loaded}%
    \renewcommand\transparent[1]{}%
  }%
  \providecommand\rotatebox[2]{#2}%
  \newcommand*\fsize{\dimexpr\f@size pt\relax}%
  \newcommand*\lineheight[1]{\fontsize{\fsize}{#1\fsize}\selectfont}%
  \ifx\svgwidth\undefined%
    \setlength{\unitlength}{506.0828694bp}%
    \ifx\svgscale\undefined%
      \relax%
    \else%
      \setlength{\unitlength}{\unitlength * \real{\svgscale}}%
    \fi%
  \else%
    \setlength{\unitlength}{\svgwidth}%
  \fi%
  \global\let\svgwidth\undefined%
  \global\let\svgscale\undefined%
  \makeatother%
  \begin{picture}(1,0.1427098)%
    \lineheight{1}%
    \setlength\tabcolsep{0pt}%
    \put(0,0){\includegraphics[width=\unitlength,page=1]{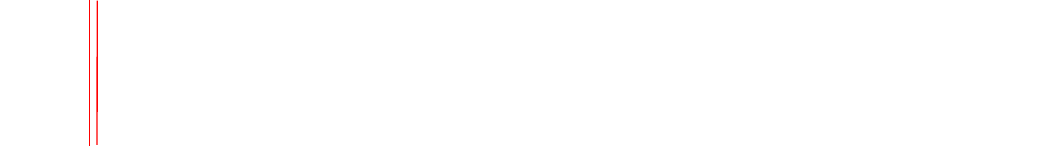}}%
    \put(0.10285982,0.01293954){\color[rgb]{0,0,0}\makebox(0,0)[lt]{\lineheight{1.25}\smash{\begin{tabular}[t]{l}$a$\end{tabular}}}}%
    \put(0.1028599,0.05612965){\color[rgb]{0,0,0}\makebox(0,0)[lt]{\lineheight{1.25}\smash{\begin{tabular}[t]{l}$a-\varepsilon$\end{tabular}}}}%
    \put(0.10328786,0.11747906){\color[rgb]{0,0,0}\makebox(0,0)[lt]{\lineheight{1.25}\smash{\begin{tabular}[t]{l}$a$\end{tabular}}}}%
    \put(0,0){\includegraphics[width=\unitlength,page=2]{Slidingproof3.pdf}}%
    \put(0.22232678,0.07542197){\color[rgb]{0,0,0}\makebox(0,0)[lt]{\lineheight{1.25}\smash{\begin{tabular}[t]{l}$=\varepsilon$\end{tabular}}}}%
    \put(0,0){\includegraphics[width=\unitlength,page=3]{Slidingproof3.pdf}}%
    \put(0.38958862,0.0910809){\color[rgb]{0,0,0}\makebox(0,0)[lt]{\lineheight{1.25}\smash{\begin{tabular}[t]{l}$a$\end{tabular}}}}%
    \put(0,0){\includegraphics[width=\unitlength,page=4]{Slidingproof3.pdf}}%
    \put(0.46484341,0.07068118){\color[rgb]{0,0,0}\makebox(0,0)[lt]{\lineheight{1.25}\smash{\begin{tabular}[t]{l}$=\varepsilon$\end{tabular}}}}%
    \put(0,0){\includegraphics[width=\unitlength,page=5]{Slidingproof3.pdf}}%
    \put(0.66525674,0.01086025){\color[rgb]{0,0,0}\makebox(0,0)[lt]{\lineheight{1.25}\smash{\begin{tabular}[t]{l}$a$\end{tabular}}}}%
    \put(0.66525674,0.05405038){\color[rgb]{0,0,0}\makebox(0,0)[lt]{\lineheight{1.25}\smash{\begin{tabular}[t]{l}$a+1$\end{tabular}}}}%
    \put(0,0){\includegraphics[width=\unitlength,page=6]{Slidingproof3.pdf}}%
    \put(0.66568459,0.11539978){\color[rgb]{0,0,0}\makebox(0,0)[lt]{\lineheight{1.25}\smash{\begin{tabular}[t]{l}$a$\end{tabular}}}}%
    \put(0.780163,0.06692197){\color[rgb]{0,0,0}\makebox(0,0)[lt]{\lineheight{1.25}\smash{\begin{tabular}[t]{l}$-\varepsilon$\end{tabular}}}}%
    \put(0,0){\includegraphics[width=\unitlength,page=7]{Slidingproof3.pdf}}%
    \put(0.94464805,0.00859806){\color[rgb]{0,0,0}\makebox(0,0)[lt]{\lineheight{1.25}\smash{\begin{tabular}[t]{l}$a$\end{tabular}}}}%
    \put(0.94464805,0.05178819){\color[rgb]{0,0,0}\makebox(0,0)[lt]{\lineheight{1.25}\smash{\begin{tabular}[t]{l}$a-1$\end{tabular}}}}%
    \put(0,0){\includegraphics[width=\unitlength,page=8]{Slidingproof3.pdf}}%
    \put(0.94507586,0.11313757){\color[rgb]{0,0,0}\makebox(0,0)[lt]{\lineheight{1.25}\smash{\begin{tabular}[t]{l}$a$\end{tabular}}}}%
  \end{picture}%
\endgroup%

\end{center}
Combining the last two equations, we get the desired conclusion.
\end{proof}

 \begin{named}{Lemma \ref{lemma:singleArcAnnuli}}With the conventions of Figure \ref{fig:fusionRules2}, and for any $\varepsilon,\mu\in \lbrace \pm 1 \rbrace$,
we have the following identity
\begin{center}
\def \svgwidth{.55\columnwidth}

\end{center}
where  the arc on the left hand side has swift number $t$, and
$\delta_{\varepsilon,\mu}$ is the Kronecker symbol. \end{named}
\begin{proof}
When the swift number $t$ is equal to zero, the identity in the lemma reduces to the bigon rule. Next we treat the case $t=\pm 1.$ Using the half-twist rule, for the following swift number $1$ arc, we check that
\begin{center}
\def \svgwidth{.63\columnwidth}
\begingroup%
  \makeatletter%
  \providecommand\color[2][]{%
    \errmessage{(Inkscape) Color is used for the text in Inkscape, but the package 'color.sty' is not loaded}%
    \renewcommand\color[2][]{}%
  }%
  \providecommand\transparent[1]{%
    \errmessage{(Inkscape) Transparency is used (non-zero) for the text in Inkscape, but the package 'transparent.sty' is not loaded}%
    \renewcommand\transparent[1]{}%
  }%
  \providecommand\rotatebox[2]{#2}%
  \newcommand*\fsize{\dimexpr\f@size pt\relax}%
  \newcommand*\lineheight[1]{\fontsize{\fsize}{#1\fsize}\selectfont}%
  \ifx\svgwidth\undefined%
    \setlength{\unitlength}{298.03375725bp}%
    \ifx\svgscale\undefined%
      \relax%
    \else%
      \setlength{\unitlength}{\unitlength * \real{\svgscale}}%
    \fi%
  \else%
    \setlength{\unitlength}{\svgwidth}%
  \fi%
  \global\let\svgwidth\undefined%
  \global\let\svgscale\undefined%
  \makeatother%
  \begin{picture}(1,0.20471846)%
    \lineheight{1}%
    \setlength\tabcolsep{0pt}%
    \put(0,0){\includegraphics[width=\unitlength,page=1]{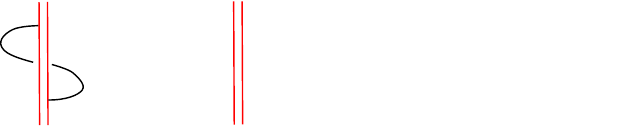}}%
    \put(0.080377,0.18805282){\color[rgb]{1,0,0}\makebox(0,0)[lt]{\lineheight{1.25}\smash{\begin{tabular}[t]{l}$a+\varepsilon$\end{tabular}}}}%
    \put(0.07859195,0.13837781){\color[rgb]{1,0,0}\makebox(0,0)[lt]{\lineheight{1.25}\smash{\begin{tabular}[t]{l}$a$\end{tabular}}}}%
    \put(0.08143492,0.01332946){\color[rgb]{1,0,0}\makebox(0,0)[lt]{\lineheight{1.25}\smash{\begin{tabular}[t]{l}$a+\mu$\end{tabular}}}}%
    \put(0.1309001,0.1008058){\color[rgb]{0,0,0}\makebox(0,0)[lt]{\lineheight{1.25}\smash{\begin{tabular}[t]{l}$=(-1)^{a+1}\varepsilon z^{\varepsilon}$\end{tabular}}}}%
    \put(0,0){\includegraphics[width=\unitlength,page=2]{SingleArcPattern3.pdf}}%
    \put(0.394749,0.17932471){\color[rgb]{1,0,0}\makebox(0,0)[lt]{\lineheight{1.25}\smash{\begin{tabular}[t]{l}$a+\varepsilon$\end{tabular}}}}%
    \put(0.39296395,0.12964969){\color[rgb]{1,0,0}\makebox(0,0)[lt]{\lineheight{1.25}\smash{\begin{tabular}[t]{l}$a$\end{tabular}}}}%
    \put(0.39580691,0.00460134){\color[rgb]{1,0,0}\makebox(0,0)[lt]{\lineheight{1.25}\smash{\begin{tabular}[t]{l}$a+\mu$\end{tabular}}}}%
    \put(0.48304227,0.08457762){\color[rgb]{0,0,0}\makebox(0,0)[lt]{\lineheight{1.25}\smash{\begin{tabular}[t]{l}$=(-1)^{a+1}z^{\varepsilon}\delta_{\varepsilon,\mu}$\end{tabular}}}}%
    \put(0,0){\includegraphics[width=\unitlength,page=3]{SingleArcPattern3.pdf}}%
    \put(0.7813137,0.12349281){\color[rgb]{1,0,0}\makebox(0,0)[lt]{\lineheight{1.25}\smash{\begin{tabular}[t]{l}$a+\varepsilon$\end{tabular}}}}%
  \end{picture}%
\endgroup%

\end{center}
The computation is similar for the other $3$ swift number $\pm 1$ patterns. Indeed, the other swift number $1$ is obtained from the previous one by rotation by $\pi$ along a vertical axis, hence we will get the same coefficient, while the swift number $-1$ patterns are obtained by mirror image from those ones, hence the coefficient we will get is the complex conjugate, which matches the lemma.
For a  pattern with swift number $\pm t$, we apply the strand rule $t-1$ times, to  get:
\begin{center}
\def \svgwidth{.4\columnwidth}
\begingroup%
  \makeatletter%
  \providecommand\color[2][]{%
    \errmessage{(Inkscape) Color is used for the text in Inkscape, but the package 'color.sty' is not loaded}%
    \renewcommand\color[2][]{}%
  }%
  \providecommand\transparent[1]{%
    \errmessage{(Inkscape) Transparency is used (non-zero) for the text in Inkscape, but the package 'transparent.sty' is not loaded}%
    \renewcommand\transparent[1]{}%
  }%
  \providecommand\rotatebox[2]{#2}%
  \newcommand*\fsize{\dimexpr\f@size pt\relax}%
  \newcommand*\lineheight[1]{\fontsize{\fsize}{#1\fsize}\selectfont}%
  \ifx\svgwidth\undefined%
    \setlength{\unitlength}{223.18262776bp}%
    \ifx\svgscale\undefined%
      \relax%
    \else%
      \setlength{\unitlength}{\unitlength * \real{\svgscale}}%
    \fi%
  \else%
    \setlength{\unitlength}{\svgwidth}%
  \fi%
  \global\let\svgwidth\undefined%
  \global\let\svgscale\undefined%
  \makeatother%
  \begin{picture}(1,0.43199073)%
    \lineheight{1}%
    \setlength\tabcolsep{0pt}%
    \put(0,0){\includegraphics[width=\unitlength,page=1]{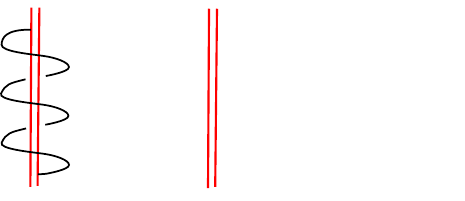}}%
    \put(0.09906658,0.3930824){\color[rgb]{1,0,0}\makebox(0,0)[lt]{\lineheight{1.25}\smash{\begin{tabular}[t]{l}$a+\varepsilon$\end{tabular}}}}%
    \put(0.09740723,0.32397335){\color[rgb]{1,0,0}\makebox(0,0)[lt]{\lineheight{1.25}\smash{\begin{tabular}[t]{l}$a$\end{tabular}}}}%
    \put(0.09076994,0.00752659){\color[rgb]{1,0,0}\makebox(0,0)[lt]{\lineheight{1.25}\smash{\begin{tabular}[t]{l}$a+\mu$\end{tabular}}}}%
    \put(0.47771694,0.39056429){\color[rgb]{1,0,0}\makebox(0,0)[lt]{\lineheight{1.25}\smash{\begin{tabular}[t]{l}$a+\varepsilon$\end{tabular}}}}%
    \put(0.15880247,0.19727341){\color[rgb]{0,0,0}\makebox(0,0)[lt]{\lineheight{1.25}\smash{\begin{tabular}[t]{l}$=\varepsilon^{t-1}$\end{tabular}}}}%
    \put(0,0){\includegraphics[width=\unitlength,page=2]{SingleArcPattern2.pdf}}%
    \put(0.52832674,0.30586512){\color[rgb]{1,0,0}\makebox(0,0)[lt]{\lineheight{1.25}\smash{\begin{tabular}[t]{l}$a+\varepsilon$\end{tabular}}}}%
    \put(0.4721066,0.23578545){\color[rgb]{1,0,0}\makebox(0,0)[lt]{\lineheight{1.25}\smash{\begin{tabular}[t]{l}$a+\varepsilon$\end{tabular}}}}%
    \put(0.52510726,0.16396056){\color[rgb]{1,0,0}\makebox(0,0)[lt]{\lineheight{1.25}\smash{\begin{tabular}[t]{l}$a+\varepsilon$\end{tabular}}}}%
    \put(0.46629894,0.09787036){\color[rgb]{1,0,0}\makebox(0,0)[lt]{\lineheight{1.25}\smash{\begin{tabular}[t]{l}$a+\varepsilon$\end{tabular}}}}%
    \put(0.46442813,0.01227521){\color[rgb]{1,0,0}\makebox(0,0)[lt]{\lineheight{1.25}\smash{\begin{tabular}[t]{l}$a+\mu$\end{tabular}}}}%
  \end{picture}%
\endgroup%

\end{center}
For each strand rule  applied, we only get one non zero contribution since from the previous computation, a single arc swift number $\pm 1$ pattern with opposite color shift on top and bottom vanishes. Now, simplifying each swift number $\pm 1$ pattern, we get the lemma.
\end{proof}

\appendix

\section{Metrification of a two-variable function}
\label{sec:metrification}

In this section, we introduce a general operation to produce a metric starting from a non-negative two variable function on a set.
We use the process to define the metric $d_{qt}$ in Section \ref{sec:TQFTmetric}.

Let $X$ be a set. Recall that $d:X\times X \longrightarrow [0,\infty)$ is called a metric on $X$ if it satisfies:
\begin{itemize}
\item[-](Symmetry) For any $x,y\in X,$ we have $d(x,y)=d(y,x).$
\item[-](Triangular inequality) For any $x,y,z \in X,$ we have $d(x,z)\leq d(x,y)+d(y,z).$ 
\item[-](Identity of indiscernables) For any $x,y \in X,$ $d(x,y)=0$ if and only if $x=y.$
\end{itemize}
We will call a function $d:X\times X \longrightarrow [0,\infty]$ a \textit{semi-metric} if it satisfies only the first two axioms and $d(x,x)=0$ for all $x\in X$. Notice we allow semi-metrics to sometimes take the value $\infty$.
\begin{definition}Let $X$ be a set and $f:X\times X \longrightarrow [0,\infty]$ be a function. For $x,y\in X$, let $P(x,y)$ be the set of all sequences ${\bf x}:=(x_0,x_1,\ldots , x_n)$ where $n\geq 0$ is an integer and $x_0=x,x_n=y.$ Then for any $x,y\in X,$ let
$$d_f(x,y)=\underset{{\bf x} \in P(x,y)}{\inf} \left( \min( f(x_0,x_1),f(x_1,x_0))+ \ldots + \min(f(x_{n-1},x_n),f(x_n,x_{n-1})) \right).$$
We call $d_f$ the metrification of $f.$
\end{definition}
The name \textit{metrification} is motivated by the following proposition:

\begin{proposition}\label{prop:metricproperties} Let $X$ be a set and $f:X\times X \rightarrow [0,\infty]$ be a function. Then:
\begin{itemize}
\item[(a)]$d_f$ is a semi-metric on $X$ and satisfies $d_f(x,y)\leq f(x,y)$ for any $x,y\in X.$
\item[(b)]For any semi-metric $d$ on $X$ such that $d\leq f$ on $X\times X,$ we have $d\leq d_f.$
\item[(c)]Suppose that for any $x,y\in X$ there is a finite sequence $\mathbf{x}:=(x_0,x_1,\ldots, x_n)$ in $P(x,y),$ such that for all $0\leq i \leq n-1,$ one has $f(x_i,x_{i+1})<\infty.$ Then $d_f$ takes value in $[0,\infty)$.
\item[(d)] Suppose that the hypothesis of (c) holds and furthermore that there is a constant $B>0$ so that for any $x\neq y$,  $f(x,y)\geq B$. Then $d_f$ is a metric on $X$.
\item[(e)]Assume that there is a group $G$ acting on $X$ so that $f$ is $G$-invariant. Then $d_f$ is $G$-invariant.
\item[(f)] If $f,g :X\times X \rightarrow [0,\infty]$ are functions such that $f\leq g$ on then $d_f\leq d_g$.
\end{itemize}
\end{proposition}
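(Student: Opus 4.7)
The plan is to verify the five items largely by direct unwinding of the definition of $d_f$, with essentially no technical obstacle; the only point requiring a small amount of care is part (c).

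For part (a), I would verify the three semi-metric axioms in turn. The fact $d_f(x,x) = 0$ is immediate from the trivial length-zero sequence $(x)$, for which the empty sum is $0$. The bound $d_f(x,y) \leq f(x,y)$ follows from the single-edge sequence $(x,y)$, whose sum is $\min(f(x,y), f(y,x)) \leq f(x,y)$. Symmetry follows because reversing a sequence ${\bf x} = (x_0, \ldots, x_n) \in P(x,y)$ produces a sequence in $P(y,x)$ whose edge-sum is unchanged, the summand $\min(f(x_i, x_{i+1}), f(x_{i+1}, x_i))$ being manifestly symmetric in its two arguments. The triangle inequality follows by concatenation of sequences: given ${\bf x} \in P(x,y)$ and ${\bf y} \in P(y,z)$, the concatenation lies in $P(x,z)$ and its sum is the sum of the two; taking infima gives $d_f(x,z) \leq d_f(x,y) + d_f(y,z)$.

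For parts (b), (d), (e), I would argue as follows. For (b), if $d$ is any semi-metric with $d \leq f$, then by symmetry of $d$ we have $d(x,y) \leq \min(f(x,y), f(y,x))$, and iterating the triangle inequality along a sequence ${\bf x} \in P(x,y)$ yields $d(x,y) \leq \sum_i \min(f(x_i, x_{i+1}), f(x_{i+1}, x_i))$; taking the infimum over ${\bf x}$ gives $d(x,y) \leq d_f(x,y)$. Part (e) is immediate since pointwise inequality $f \leq g$ gives a termwise inequality on the sums defining $d_f(x,y)$ and $d_g(x,y)$. For (d), given $g \in G$, the map ${\bf x} \mapsto g \cdot {\bf x} = (gx_0, \ldots, gx_n)$ is a bijection $P(x,y) \to P(gx, gy)$ preserving each summand by $G$-invariance of $f$; hence $d_f(gx, gy) = d_f(x,y)$.

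The only step requiring a little attention is part (c), the separation axiom. Given $x \neq y$ and any sequence ${\bf x} = (x_0, \ldots, x_n) \in P(x,y)$, I would first reduce to the case where all consecutive $x_i, x_{i+1}$ are distinct: if $x_i = x_{i+1}$ for some $i$, delete $x_{i+1}$ from the sequence, which still lies in $P(x,y)$ and whose new sum is at most the old sum (since we have removed a non-negative term). After this reduction, the sequence still has at least one edge because $x \neq y$, and each remaining summand satisfies $\min(f(x_i, x_{i+1}), f(x_{i+1}, x_i)) \geq B$ by hypothesis. Hence the sum is $\geq B$, and taking the infimum yields $d_f(x,y) \geq B > 0$. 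Combined with (a), this shows $d_f$ is a metric. I expect no genuine obstacle in any of these arguments; the mild subtlety in (c) concerning the possibility of repeated consecutive entries in a sequence is the only place where one must be attentive.
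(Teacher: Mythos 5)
Your proof is correct and follows essentially the same route as the paper's: the trivial and single-edge sequences for (a), reversal and concatenation for symmetry and the triangle inequality, iterating the triangle inequality of $d$ along a sequence for (b), and direct comparison of summands for (d) and (e). In (c) your preliminary deletion of repeated consecutive entries is a harmless variant of the paper's shorter observation that any sequence from $x$ to $y$ with $x\neq y$ already contains one distinct consecutive pair, whose term alone bounds the (non-negative) sum below by $B$.
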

\begin{proof}
(a) For any $x,y \in X,$ we have $(x,y)\in P(x,y),$ so $$d_f(x,y)\leq \min(f(x,y),f(y,x))\leq f(x,y).$$ 
Moreover, for any $x\in X,$ the $1$-tuple $(x) \in P(x,x),$ therefore $d_f(x,x)=0.$
 
Next we show that $d_f$ is symmetric. For any $(x_0,x_1,\ldots ,x_n)\in P(x,y),$ we have that $(x_n,x_{n-1},\ldots,x_0) \in P(y,x).$ Therefore
\begin{multline*}d_f(y,x)\leq \min(f(x_n,x_{n-1}),f(x_{n-1},x_n))+ \ldots + \min(f(x_1,x_0),f(x_0,x_1))
\\ = \min( f(x_0,x_1),f(x_1,x_0))+ \ldots + \min(f(x_{n-1},x_n),f(x_n,x_{n-1})).
\end{multline*}
Taking the infimum over $P(x,y)$ we get $d_f(y,x)\leq d_f(x,y).$ By symmetry, $d_f(y,x)=d_f(x,y).$ 

Finally, $d_f$ satisfies the triangle inequality. Indeed, for any $x,y,z \in X,$ for any $(x=x_0,\ldots,x_n=y) \in P(x,y)$ and $(y_0=y,\ldots, y_m=z) \in P(y,z),$ we have that $(x=x_0,x_1,\ldots,x_n=y_0,y_1 \ldots, y_m=z) \in P(x,z).$ Thus
\begin{multline*}d_f(x,z)\leq \left( \min(f(x_0,x_1),f(x_1,x_0))+ \ldots + \min(f(x_{n-1},x_n),f(x_n,x_{n-1}) \right) 
\\+ \left( \min(f(y_0,y_1),f(y_1,y_0))+ \ldots + \min( f(y_{m-1},y_m), f(y_m,y_{m-1}))  \right).
\end{multline*}
Taking infimum over $(x_0,\ldots x_n) \in P(x,y)$ and $(y_0,\ldots,y_m)\in P(y,z),$ we get $d_f(x,z)\leq d_f(x,y)+d_f(y,z).$

(b) Assume that $d$ is a semi-metric and $d\leq f$ on $X\times X.$ For any $x,y,$ as $d$ is symmetric we have $d(x,y)=\min(d(x,y),d(y,x)) \leq \min(f(x,y),f(y,x)).$ Moreover, by the triangle inequality, for any $(x_0,\ldots ,x_n) \in P(x,y),$ we have
\begin{align}\notag
d(x,y)&\leq d(x,x_1)+\ldots +d(x_{n-1},y)&\\ &\leq \min(f(x,x_1),f(x_1,x)) + \ldots + \min(f(x_{n-1},y),f(y,x_{n-1})).\notag\end{align}
Taking the infimum over $P(x,y),$ we get $d(x,y)\leq d_f(x,y).$

(c) Let $x,y\in X$ and let $x_0,\ldots,x_n$ be the sequence provided by the hypothesis. By definition of $d_f$, we get that $d_f(x,y)\leq f(x_0,x_1)+\ldots +f(x_{n-1},x_n),$ which is finite, which proves the claim.

(d) First remark that $d_f$ takes value in $[0,\infty)$ by (c). Moreover, if $x\neq y,$ any element $(x_0=x,x_1,\ldots ,x_n=y)$ of $P(x,y)$ contains two consecutive points $x_i,x_{i+1}$ that are distinct. Then we have
\begin{align} \notag d_f(x,y) =&\min(f(x_0,x_1),f(x_1,x_0))+\ldots + \min(f(x_{n-1},x_n),f(x_n,x_{n-1}))\geq &\\ \notag
&\geq \min(f(x_i,x_{i+1}),f(x_{i+1},x_i))\geq B. \notag
\end{align}

(d) For any $g\in G,$ for any $x,y\in X$ and $(x_0=x,\ldots ,x_n=y) \in P(x,y),$ we have that $(g\cdot x_0,\ldots ,g\cdot x_n) \in P(g\cdot x,g\cdot y).$ Moreover, as $f$ is $G$-invariant, we have
\begin{multline*}\min(f(g\cdot x_0,g\cdot x_1),f(g\cdot x_1,g\cdot x_0))+\ldots +\min(f(g\cdot x_{n-1},g\cdot x_n),f(g\cdot x_n,g\cdot x_{n-1}))
\\=\min(f(x_0,x_1),f(x_1,x_0))+\ldots +\min(f(x_{n-1}, x_n),f(x_n,x_{n-1})).
\end{multline*}
Taking infimum over $P(x,y),$  we get $d_f(g\cdot x,g\cdot y) \leq d_f(x,y),$
and by symmetry, $d_f(g\cdot x,g\cdot y)=d_f(x,y).$
Finally, (e) is clear from the definition of $d_f,d_g.$
\end{proof}

\begin{figure}[!h]
\centering
\def \svgwidth{.70\columnwidth}
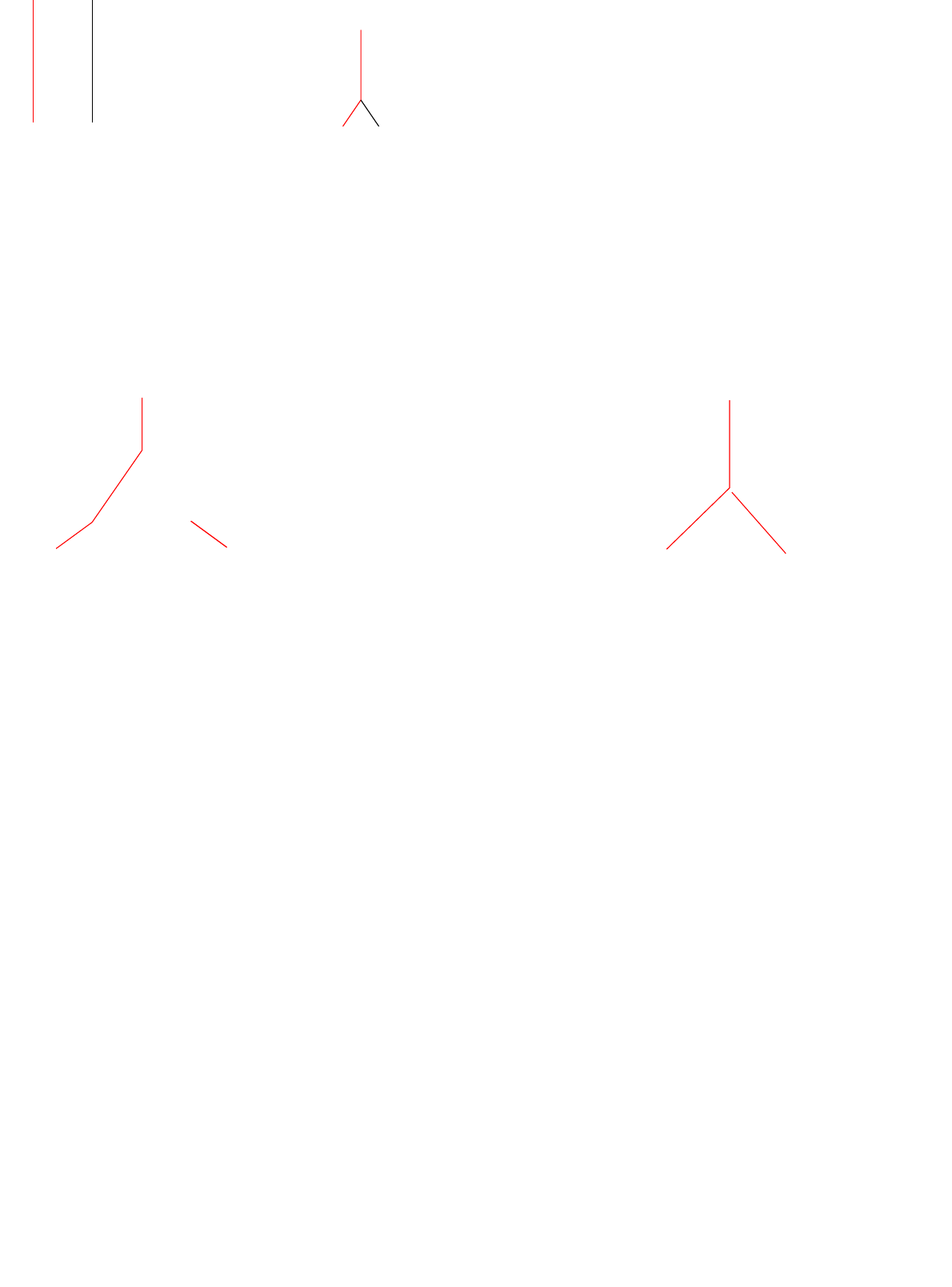
\caption{The fusion rules.}
\label{fig:fusionRules}
\end{figure}

\section{General fusion rules}
 \label{sec:appendix}
Fix a decomposition system $P\cup P'$ on $\Sigma$ with dual  graph $\Gamma$
and let
$\gamma$ be a multicurve  in Dehn-Thurston position
with respect to $P\cup P'$. 
By \cite{RD}, the 
coefficients $G_{\bf k}^{\gamma}$ of the operators $T^{\gamma}_r$
can be computed using fusion rules of Figure \ref{fig:fusionRules}, which are
an adaptation of the  Masbaum-Vogel \cite{MV94} $SU(2)$-fusion rules
in our setting. Note that, as in \cite{RD}, and in contrast with \cite{MV94, BHMV2} we  are using the \textit{normalized fusion rules}, since, as explained in Section \ref{sec:two},
the bases
${\mathcal B}_P$ of the TQFT spaces are normalized so that they are orthonormal (i.e. the vectors in them have Hermitian norm $1$). 
The difference between the two versions of the fusion rules is explained in detail in \cite[Section 4.1]{RD}; page 3072.

As we explained in  Section \ref{sec:two}
surface $\s$ is viewed as being constructed by glueing together two copies
of $\Gamma$, say $\Gamma, \Gamma'$. 
The red thick lines  in Figure \ref{fig:fusionRules} (shown in grey in black-and-white/grey scale)
indicate portions of the graph $\Gamma$ while the copy $\Gamma'$  is omitted.
We consider  $\Gamma \cup \gamma$
on $\s$ and the 
intersections of $\gamma$ with  $\Gamma'$ are recorded as over/under crossings of $\gamma$ and $\Gamma$.
The labels indicate admissible colorings of $\Gamma$
which correspond to vectors in the basis
of the TQFT spaces $V_r(\Sigma)$ obtained from $P$.

Note that all the relations occur between portions of colored copies of $\Gamma \cup \gamma$ that lie on a single pants or annulus piece  of the fixed decomposition of $\s$.
Reading Figure \ref{fig:fusionRules} from top to bottom, and rows from left to right,
we refer to these rules as follows: Strand rule, positive/negative half-twist rules, positive/negative/mixed triangle rules and bigon rule.
 
 The black thiner lines indicate  arcs of $\gamma$ and the color on these portions is $2$. 
 As we  remarked in   Section \ref{sec:two} the colors of all arcs have been shifted by $1$ from these of
  \cite{BHMV2}. Thus, in particular,
  the color $2$ corresponds to the fundamental $U_q(sl_2)$-representation of dimension $2$.
 Finally, for
 $x\in \ZZ$,  we have
 $<x>:=\sin(\frac{\pi x}{r})$.

\bibliographystyle{hamsplain}
\bibliography{biblio}
\end{document}